\newcommand{\res}{\!\!\mathop{\hbox{
                                \vrule height 7pt width .5pt depth 0pt
                                \vrule height .5pt width 6pt depth 0pt}}
                                \nolimits}
\spnewtheorem{assumption}{Assumption}{\bf}{\rm}
\numberwithin{equation}{section}
\newcommand{\R}{\mathbb R}
\newcommand{\N}{\mathbb N}
\newcommand{\1}{\raisebox{2pt}{\rm{$\chi$}}}
\newcommand{\z}{{\bf z}}
\renewcommand{\a}{{\bf a}}
\begin{document}

\title{Nonlocal doubly nonlinear diffusion problems with nonlinear  boundary conditions}


\titlerunning{Nonlocal nonlinear diffusion  with  nonlinear boundary conditions}        

\author{Marcos Solera \and Juli\'an Toledo}


\institute{M. Solera-Diana \at
Departamento de An\'{a}lisis Matem\'atico,
Universitat de Val\`encia, Valencia, Spain.
              \email{marcos.solera@uv.es},
              https://orcid.org/0000-0001-7774-4516        %
           \and
J. Toledo (corresponding author) \at
Departamento de An\'{a}lisis Matem\'atico,
Universitat de Val\`encia, Valencia, Spain.
\email{toledojj@uv.es}, https://orcid.org/0000-0001-6960-7351
}

\maketitle

\begin{abstract}
We study the existence and uniqueness of mild and strong solutions of  nonlocal  nonlinear diffusion problems of $p$-Laplacian type   with nonlinear boundary conditions posed in metric random walk spaces. These spaces include, among others, weighted discrete graphs and $\mathbb{R}^N$ with a random walk induced by a nonsingular kernel. We also study the case of nonlinear  dynamical boundary conditions. The generality of the nonlinearities considered allow us to cover the nonlocal counterparts of a large scope of local diffusion problems like, for example, Stefan problems, Hele-Shaw problems, diffusion in porous media problems and obstacle problems. Nonlinear semigroup theory is the basis for this study.
\keywords{Random walks, nonlocal operators, weighted graphs, $p-$Laplacian,  Neumann boundary conditions, diffusion  in porous media, Stefan problem, Hele-Shaw problem, obstacle problems,   dynamical boundary conditions}
 \subclass{35K55 \and 47H06 \and 47J35}
\end{abstract}

\section{Introduction and preliminaries}

In this article we study the existence and uniqueness of mild and strong solutions of nonlocal nonlinear diffusion problems of $p$-Laplacian type with nonlinear boundary conditions. The problems are posed in a subset~$W$ of a metric random walk space $[X,d,m]$ with a reversible measure $\nu$ for the random walk $m$ (see Subsection~\ref{semrw}  for details). The nonlocal diffusion can hold either in $W$, in its nonlocal boundary $\partial_mW$, or in both at the same time. We will assume that $W\cup\partial_mW$ is $m$-connected and $\nu$-finite. The formulations of the diffusion problems that we study are the following
\begin{equation}\label{sabore001particularintro01}
\left\{ \begin{array}{ll} v_t(t,x) - \hbox{div}_m\a_p u(t,x)=f(t,x),    &x\in  W,\ 0<t<T,
\\ \\ \displaystyle v(t,x)\in\gamma\big(u(t,x)\big), &
    x\in  W,\ 0<t<T,
\\ \\ -\mathcal{N}^{\a_p}_\mathbf{1} u(t,x) \in\beta\big(u(t,x)\big),    &x\in\partial_mW, \  0<t<T, \\ \\ v(0,x) = v_0(x),    &x\in W, \end{array} \right.
\end{equation}
  and, for nonlinear dynamical boundary conditions,
\begin{equation}\label{sabore001bevolparticularintro01}
\left\{ \begin{array}{ll}
\displaystyle v_t(t,x) - \hbox{div}_m\a_p u(t,x)=f(t,x),    &x\in  W,\ 0<t<T,
\\ \\ \displaystyle v(t,x)\in\gamma\big(u(t,x)\big), &
    x\in  W,\ 0<t<T,
\\ \\ w_t(t,x)+\mathcal{N}^{\a_p}_\mathbf{1} u(t,x)=g(t,x) ,   &x\in\partial_m W, \  0<t<T,
\\ \\ w(t,x) \in\beta\big(u(t,x)\big),    &x\in\partial_m W, \  0<t<T,
 \\ \\ v(0,x) = v_0(x),    &x\in W,
 \\ \\ w(0,x) = w_0(x),    &x\in \partial_m W, \end{array} \right.
\end{equation}
where
  $\gamma$ and $\beta$ are maximal monotone (multivalued) graphs in $\mathbb{R}\times \mathbb{R}$,
  $\hbox{div}_m\a_p$ is a nonlocal Leray-Lions type operator whose model is the nonlocal $p$-Laplacian type diffusion operator,
and $\mathcal{N}^{\a_p}_\mathbf{1}$ is a nonlocal Neumann boundary operator (see   Subsection~\ref{sub21} for details).   In fact, we solve these problems with greater generality, as we will not only consider them for a set $W$ and its nonlocal boundary $\partial_m W$, but rather for any two disjoint subsets $\Omega_1$ and $\Omega_2$ of $X$ such that their union is $m$-connected.

These problems can be seen as the nonlocal counterpart of local diffusion problems governed by the $p$-Laplacian diffusion operator (or a Leray-Lions operator) where two further nonlinearities are induced by $\gamma$ and $\beta$ (see for example~\cite{AIMTifb} and~\cite{BCrS} for  local problems). In~\cite{ElLibro}, and the references therein, one can find an interpretation of the nonlocal diffusion process involved in these kind of problems. On the nonlinearities (brought about by) $\gamma$ and $\beta$ we do not impose any further assumptions aside from the natural one (see B\'{e}nilan, Crandall and Sacks~\cite{BCrS}):
$$ 0\in\gamma(0)\cap\beta(0),
$$
and (in order for diffusion to take place)
$$ \nu(W)\varGamma^- + \nu(\partial_mW)\mathfrak{B}^-< \nu(W)\varGamma^+ + \nu(\partial_mW)\mathfrak{B}^+,
$$
where
$$\varGamma^-=\inf  \mbox{Ran}(\gamma) , \ \varGamma^+=\sup \mbox{Ran}(\gamma), \ \mathfrak{B}^-=\inf \mbox{Ran}(\beta) \ \hbox{ and } \
\mathfrak{B}^+=\sup \mbox{Ran}(\beta).$$
Therefore, we work with a rather general class of nonlocal nonlinear diffusion problems with nonlinear   boundary conditions.  We are able to directly cover: obstacle problems, with unilateral or bilateral obstacles (either in $W$, in $\partial_mW$, or in both at the same time); the nonlocal counterpart of Stefan like problems, that involve monotone graphs like   the graph inverse of
$$\theta_S(r)=\left\{\begin{array}{ll}
r & \hbox{if $r<0$},\\[4pt]
[0,\lambda]&\hbox{if $r=0$,}\\[4pt]
\lambda+r  & \hbox{if $r>0$},
\end{array}\right.$$
  for $\lambda>0$; diffusion problems in porous media, where monotone graphs like $p_s(r)=|r|^{s-1}r$, $s>0$, are involved; and Hele-Shaw type problems, which involve graphs like
  $$H(r)=\left\{\begin{array}{ll}
0 & \hbox{if $r<0$},\\[4pt]
[0,1]&\hbox{if $r=0$,}\\[4pt]
1  & \hbox{if $r>0$}.
\end{array}\right.$$
Moreover, if $\gamma=0$ in problem \eqref{sabore001particularintro01} then the dynamics only appear in the nonlocal boundary and we obtain the evolution problem for a nonlocal Dirichlet-to-Neumann operator as a particular case. In addition, the homogeneous Dirichlet boundary condition ($\beta=\{0\}\times \mathbb{R}$) and the Neumann boundary condition ($\beta=\mathbb{R}\times\{0\}$) are also covered.

  Nonlocal diffusion problems of $p$-Laplacian type involving nonlocal Neumann boundary operators have been recently studied in \cite{MST4} inspired by the nonlocal Neumann boundary operators for the linear case studied in~\cite{DR-OV} and~\cite{GL1}. Nevertheless, due to the generality of the hypotheses considered in this study, the results that we obtain lead to new existence and uniqueness results,   which do not follow from previous works, for a great range of problems. This is true even when the problems are considered on weighted discrete graphs or $\mathbb{R}^N$ with a random walk induced by a nonsingular kernel, spaces for which only some particular cases of these problems have been studied (some references are given afterwards). For these ambient spaces and for the precise choice of the nonlocal $p$-Laplacian operator,   Problem~\eqref{sabore001particularintro01} has the following formulations (see Subsection~\ref{semrw}, in particular Examples \ref{ejem01} and \ref{ejem02}, and Definition \ref{boundaryandclosure}, for the necessary definitions and notations):
$$
\left\{ \begin{array}{ll} v_t(t,x) = \displaystyle \frac{1}{d_x} \sum_{y \in V(G)} w_{x,y} \vert u(y)-u(x)\vert^{p-2}(u(y) - u(x)), \quad
   &x\in  W,\ 0<t<T,
    \\ \\ \displaystyle v(t,x)\in\gamma\big(u(t,x)\big), &
    x\in  W,\ 0<t<T,
    \\ \\  \displaystyle \frac{1}{d_x} \sum_{y \in W_{m^G}} w_{x,y} \vert u(y)-u(x)\vert^{p-2}(u(y) - u(x)) \in \beta(u(t,x)), \quad   &x\in \partial_{m^G}W, \  0<t<T, \\ \\ u(x,0) = u_0(x),    &x\in W, \end{array} \right.
$$
for weighted discrete graphs, and
$$
\left\{ \begin{array}{ll} v_t(t,x) = \displaystyle\int_{\R^N} J(y-x) \vert u(y)-u(x)\vert^{p-2}(u(y) - u(x)) dy, \quad
   &x\in  W,\ 0<t<T,
\\ \\ \displaystyle v(t,x)\in\gamma\big(u(t,x)\big), &
    x\in  W,\ 0<t<T,
   \\ \\   \displaystyle\int_{W_{m^J}} J(y-x) \vert u(y)-u(x)\vert^{p-2}(u(y) - u(x)) dy \in \beta(u(t,x)), \quad   &x\in\partial_{m^J}W, \  0<t<T, \\ \\ v(x,0) = v_0(x),    &x\in W. \end{array} \right.
$$
 for the case of $\mathbb{R}^N$ with the random walk induced by the nonsingular kernel $J$. We have detailed these problems with well-known formulations in order to show the extent to which Problems~\eqref{sabore001particularintro01} and \eqref{sabore001bevolparticularintro01} cover specific nonlocal problems of great interest.

Nonlinear semigroup theory will be the basis for the study of the existence and uniqueness of solutions of the above problems. This study is developed in Section~\ref{lasec3}, where we prove, as a particular case of Theorem~\ref{nsth01bevol}, the existence of mild solutions of Problem~\eqref{sabore001bevolparticularintro01} for general data in $L^1$, and of strong solutions assuming extra integrability conditions on the data. Moreover, a contraction and comparison principle is obtained. The same is done for Problem~\eqref{sabore001particularintro01} in Theorem~\ref{elsegundo01}. See \cite{BARBU}, \cite{BARBU2}, \cite{Benilantesis},  \cite{Brezis}, \cite{Cr}, \cite{Cr2} and \cite{CrandallLiggett}, for details on such theory, which is completely covered in the well known unpublished manuscript  {\it Evolution equations governed by accretive operators} written by  Ph. B\'{e}nilan, M. G. Crandall and A. Pazy. A summary of it can be found in \cite[Appendix]{ElLibro}.

 To apply the nonlinear semigroup theory our first aim is to prove the existence and uniqueness of solutions of the problem
 \begin{equation}\label{02091131intro001}
\left\{ \begin{array}{ll} \gamma\big(u(x))-\hbox{div}_m\a_p u(x) \ni \varphi(x), \quad &  x\in W, \\ [8pt]  \mathcal{N}^{\a_p}_1  u(x)+\beta\big(u(x)\big)\ni \varphi(x),  \quad & x\in\partial_m W, \end{array} \right.
\end{equation}
for general maximal monotone graphs $\gamma$ and $\beta$. This is
 the nonlocal counterpart of (local) quasilinear elliptic problems with  nonlinear boundary conditions (see~\cite{AIMTq} and~\cite{BCrS} for the general study of the local case) and is an interesting problem in itself due to the generality with which we address it. To this aim, we make use of a kind of nonlocal Poincar\'{e} type inequalities (see Appendix~\ref{secineqpoin}) which help us obtain boundedness arguments. These boundedness arguments together with some monotonicity arguments allow us to prove our results by adapting some of the ideas used in~\cite{AIMTq} and~\cite{BCrS}  (see also~\cite{BLGJEvolE} for a very particular case). The same holds for the diffusion problems.
The study of Problem~\eqref{02091131intro001} is developed in Section~\ref{lasec2}, where we prove, for a more general problem, the existence of solutions (Theorem~\ref{existenceeli01}) and a contraction and comparison principle (Theorem~\ref{maxandcont01}). At the end of that section we deal with another nonlocal Neumann boundary operator.

  For linear or quasilinear elliptic problems with boundary conditions, obstacles complicate the existence of solutions. The appearance of this difficulty is better understood when one takes into account the continuity of the solution between the inside of the domain and the boundary via the trace. In fact, for a bounded smooth domain $\Omega$ in $\mathbb{R}^N$,    $\gamma$ with bounded domain $[0,1]$ and $\beta(r)=0$ for all $r$, it is not possible to find a weak solution of
 $$\left\{
 \begin{array}{ll}
 -\Delta u+\gamma(u)\ni\varphi&\hbox{in }\Omega,\\[8pt]
 \nabla u\cdot\eta=\widetilde\varphi&\hbox{in }\partial\Omega,
 \end{array}
 \right.
 $$
 for data satisfying $\varphi\le 0$, $\widetilde\varphi\le 0$ and $ \widetilde\varphi\not\equiv 0$ (see~\cite{AIMTq}). However, in our nonlocal setting this sort of continuity is not present and the  study of these nonlocal diffusion problems with obstacles hence differs from the study of the local ones (see~\cite{AIMTobs} for a detailed study of these local problems). In particular, we do not need to impose any assumptions on the nonlinearities $\gamma$ and $\beta$ aside from the natural ones.

  There is a very long list of references for the local elliptic and parabolic counterparts of the problems that we study; see, for example,~\cite{AIMTifb}, \cite{AIMTq}, \cite{Benilantesis}, \cite{BBrCr}, \cite{BCr1},  \cite{BCrS}, \cite{Chilletal}, \cite{Sauter}, \cite{Vazquezbook}, and the references therein. See also~\cite{Nour01} for a  Hele-Shaw problem with dynamical boundary conditions and the references therein. For some   particular nonlocal problems   we refer to~\cite{BLGJEvolE}, \cite{ElLibro}, \cite{julioetal},  \cite{brandeletal}, \cite{Chasseigneetal},  \cite{fadili01}, \cite{KSZ} and~\cite{MST4}.  For fractional diffusion problems we refer, for example, to~\cite{MRTFrac}, where Dirichlet and   Neumann boundary conditions are considered;   to~\cite{Bonforteetal}, \cite{BonforteVazquez}, \cite{cianietal}, \cite{dePabloetal} and~\cite{giacomoni},  where fractional porous medium equations are studied, see also J.~L.~V\'{a}zquez's survey~\cite{JLsurvey} and the references therein;   and to~\cite{delTeso2} and~\cite{delTeso01} for fractional diffusion problems for the  Stefan problem.

We now introduce the framework space considered and some other concepts that will be used later on.

 \subsection{Metric random walk  spaces}\label{semrw}
  Let $(X,d)$ be a  Polish metric  space equipped with its Borel $\sigma$-algebra. In the following, whenever we consider
a measure on $X$ we assume that it is defined on this $\sigma$-algebra.

 As introduced  in~\cite{O}, a {\it random walk} $m$ on $X$ is a family of Borel probability measures $m_x$ on $X$, $x \in X$, satisfying the two technical conditions: (i) the measures $m_x$  depend measurably on the point  $x \in X$, i.e., for any Borel set $A$ of $X$ and any Borel set $B$ of $\R$, the set $\{ x \in X \ : \ m_x(A) \in B \}$ is Borel; (ii) each measure $m_x$ has finite first moment, i.e. for some (hence any) $z \in X$, and for any $x \in X$ one has $\int_X d(z,y) dm_x(y) < +\infty$.

 A {\it metric random walk  space} $[X,d,m]$  is a Polish  metric space $(X,d)$ together with a  random walk $m$.

 A $\sigma$-finite measure $\nu$ on $X$ is {\it invariant} with respect to the random walk $m=(m_x)$ if
 $$\nu (A):=\int_X m_x(A)d\nu(x)\ \  \hbox{ for every Borel set $A$}.$$
Moreover, the measure $\nu$ is said to be {\it reversible} with respect to $m$ if the following balance condition holds:
$$dm_x(y)d\nu(x)=dm_y(x)d\nu(y),$$
that is, for any Borel set $C \subset X \times X$,  $$  \int_{X}\left(\int_X \1_{C}(x,y)  dm_x(y)\right)d\nu(x)  =  \int_X\left(\int_X\1_C(x,y) dm_y(x)\right)d\nu(y).$$
Under suitable assumptions on the metric random walk space $[X,d,m]$, such a reversible measure $\nu$ exists and is unique. Note that the reversibility condition implies the invariance condition.

\begin{assumption}\label{assumption1}
From this point onwards, $[X,d,m]$ is a metric random walk space equipped with a $\sigma$-finite measure $\nu$ which is reversible (thus invariant) with respect to $m$.
\end{assumption}

Let $\mathcal{B}$ be the Borel $\sigma$-algebra of $(X,d)$. Since $\nu$ is a $\sigma$-finite measure on $(X,\mathcal{B})$ and $m$ is a stochastic kernel on $(X,\mathcal{B})$, we may define the tensor product $\nu\otimes m_x$ of $\nu$ and $m$ (see, for example, \cite[Section 1.2.2]{Douc}, see also~\cite[Section~2.5]{Ambrosio}), which is a measure on $(X\times X, \mathcal{B}\otimes\mathcal{B})$, by
$$\nu\otimes m_x (A\times B):=\int_A m_x(B) d\nu(x) \quad \hbox{for every } A,\, B\in \mathcal{B}. $$
Then, a $\sigma$-finite measure $\nu$ invariant with respect to $m$ is reversible if, and only if, the measure $\nu\otimes m_x$ is symmetric.
 Note that, for every  $g\in L^1(X\times X,\nu\otimes m_x)$,
  $$\int_{X \times X} g  d(\nu \otimes m_x)   = \int_X   \int_X g(x,y) dm_x(y)  d\nu(x).$$

\begin{example}\label{ejem01} An important class of examples of metric random walk spaces is composed by those which are obtained from weighted discrete graphs. Let $G = (V(G), E(G),(w_{xy})_{x,y\in V(G)})$ be a weighted discrete graph, where $V(G)$ is the set of vertices, $E(G)$ is the set of edges and $w_{xy} = w_{yx}$ is the nonnegative weight assigned to the edge $(x,y) \in E(G)$ (we suppose that $w_{xy} = 0$ if $(x,y) \not\in E(G)$ for $x,y\in V(G)$). In this case, the following probability measures define a random walk on $(V(G),d_G)$ (here, $d_G$ is the standard graph distance):
$$m_x^G:=\frac{1}{d_x}\sum_{y\in V(G)}w_{xy},$$
where $d_x:= \sum_{y\sim x} w_{xy} = \sum_{y\in V(G)} w_{xy}$. Note that, if $w_{x,y}=1$ for every $(x,y)\in E(G)$, then $d_x$ coincides with the degree of the vertex $x$ in the graph, that is,  the number of edges containing the vertex $x$. Moreover, the measure $\nu_G$ defined by
 $$\nu_G(A):= \sum_{x \in A} d_x,  \ \ \ A \subset V(G),$$
is a reversible measure with respect to this random walk.
\end{example}

\begin{example}\label{ejem02}  Another important class of examples is given by those of the form $[\R^N, d,m^J]$ where $d$ is the Euclidean distance and $m^J$ is defined as follows: let  $J:\R^N\to[0,+\infty[$ be a measurable, nonnegative and radially symmetric
function  satisfying $\int_{\R^N}J(z)d\mathcal{L}^N(z)=1$ ($\mathcal{L}^N$ is the Lebesgue measure) and set
$$m^J_x(A) :=  \int_A J(x - y) d\mathcal{L}^N(y) \quad \hbox{ for every Borel set } A \subset  \R^N \hbox{ and }x\in\R^N.$$
In this case $\mathcal{L}^N$ is a reversible measure with respect to this random walk.

See~\cite{MST0} (in particular ~\cite[Example 1.2]{MST0}) for a more detailed exposition of these and other examples.
\end{example}

\begin{definition}
Given two measurable subsets $A$, $B \subset X$, we define the {\it $m$-interaction} between $A$ and $B$ as
$$
L_m(A,B):= \int_A \int_B dm_x(y) d\nu(x).
$$
 \end{definition}
Note that, whenever  $L_m(A,B) < +\infty$, if $\nu$ is reversible with respect to $m$,
 $$L_m(A,B)=L_m(B,A).$$

\begin{definition}\label{boundaryandclosure}{\rm
 Given a measurable set $\Omega \subset X$, we define its {\it $m$-boundary} as
  $$\partial_m\Omega:=\{ x\in X\setminus \Omega : m_x(\Omega)>0 \}$$
  and its {\it $m$-closure} as $$\Omega_m:=\Omega\cup\partial_m\Omega.$$}
  \end{definition}

Moreover, we define the following ergodicity property.

\begin{definition}\label{defomegaconnected}
{\rm Let $[X,d,m]$ be a metric random walk space with a reversible measure $\nu$  with respect to $m$, and let $\Omega\subset X$ be a measurable and non-$\nu$-null subset. We say that $\Omega$ is {\it $m$-connected} if $L_m(A,B)>0$ for every pair of measurable non-$\nu$-null sets $A$, $B\subset \Omega$ such that $A\cup B=\Omega$   (see~\cite{MST0}).}
\end{definition}

We recall the following nonlocal notions of gradient and divergence.
\begin{definition}\label{nonlocalgraddiv}
{\rm Given a function $u : X \rightarrow \R$ we define its {\it nonlocal gradient} $\nabla u: X \times X \rightarrow \R$ as
$$\nabla u (x,y):= u(y) - u(x), \quad \, x,y \in X.$$
For a function $\z : X \times X \rightarrow \R$, its {\it $m$-divergence} ${\rm div}_m \z : X \rightarrow \R$ is defined as
 $$({\rm div}_m \z)(x):= \frac12 \int_{X} (\z(x,y) - \z(y,x)) dm_x(y), \quad x\in X.$$}
\end{definition}

\subsection{Yosida approximation and a B\'{e}nilan-Crandall relation}\label{secacc}\

  Given a maximal monotone graph $\vartheta$ in $\R\times\R$ (see~\cite{Brezis}) and   $\lambda>0$, let us denote by
  $$\vartheta_\lambda:=\lambda\left(I-\left(I+\frac1\lambda \vartheta\right)^{-1}\right)$$
  the {\it Yosida approximation of $\vartheta$} of parameter $1/\lambda$.

The function $\vartheta_\lambda$ is maximal monotone and Lipschitz continuous with Lipschitz constant $\lambda$ (see~\cite[Proposition 2.6]{Brezis}. Moreover,
 $\lim_{\lambda\to +\infty} \vartheta_\lambda (s) = \vartheta^0 (s)$ where
$$\vartheta^0(s) :=\left\{ \begin{array}{ll}
                         \hbox{the element of minimal absolute value of $\vartheta(s)$} & \hbox{if } s\in D(\vartheta), \\
                         +\infty & \hbox{if } [s,+\infty)\cap D(\vartheta)=\emptyset, \\
                         -\infty & \hbox{if } (-\infty,s]\cap D(\vartheta)=\emptyset,
                       \end{array}\right.$$
is an extension to $\R$ of the minimal section of $\vartheta$. Furthermore, if $s\in D(\vartheta)$, $|\vartheta_\lambda(s)|\le |\vartheta^0(s)|$ for every $\lambda>0$, and $|\vartheta_\lambda(s)|$ is nondecreasing in $\lambda$.

Given a maximal monotone graph $\vartheta$ in $\R\times\R$ with $0\in\vartheta(0)$, we define, for $s\in D(\vartheta)$,
$$\vartheta_+(s):=\left\{\begin{array}{ll}
\vartheta(s) & \hbox{ if $s>0$,}\\
\vartheta(0)\cap [0,+\infty) & \hbox{ if $s=0$,}\\
\{0\} & \hbox{ if $s<0$,}
\end{array}\right.$$
and
$$\vartheta_-(s):=\left\{\begin{array}{ll}
\{0\} & \hbox{ if $s>0$,}\\
\vartheta(0)\cap (-\infty,0] & \hbox{ if $s=0$,}\\
\vartheta(s) & \hbox{ if $s<0$.}
\end{array}\right.$$
Note that the Yosida approximation $(\vartheta_+)_\lambda$ of $\vartheta_+$ is nondecreasing in $\lambda>0$ and $(\vartheta_-)_\lambda$ is nonincreasing in $\lambda>0$. Observe also that $(\vartheta_+)_\lambda(s)=0$ for $s\le 0$ and $(\vartheta_-)_\lambda(s)=0$ for $s\ge 0$, for every $\lambda>0$, and $\vartheta_++\vartheta_-=\vartheta$.

Given a maximal monotone graph $\vartheta$ with $0\in D(\vartheta)$, $j_\vartheta(r):=\int_0^r\vartheta^0(s)ds$, $r\in\R$, defines a convex and lower semicontinuous function such   that $\vartheta$ is equal to the subdifferential of $j_\vartheta$: $$\vartheta=\partial j_\vartheta.$$
Moreover, if $j_\vartheta{}^*$ is the Legendre transform of $j_\vartheta$, then
$$\vartheta{}^{-1}=\partial j_\vartheta{}^*.$$

We now recall a B\'enilan-Crandall relation between functions $u, v\in L^1(\Omega,\nu)$. Denote by $J_0$ and $P_0$ the following sets of functions:
$$J_0 := \{ j : \R \rightarrow [0, +\infty] \ : \ \mbox{$j$ is convex,
lower semicontinuous and} \ j(0) = 0 \},$$
  $$ P_0:= \left\{\rho\in  C^\infty(\R) \ : \ 0\le \rho'\le 1, \hbox{ supp}(\rho')  \hbox{ is compact and }
  0\notin \hbox{supp}(\rho) \right\}.
  $$
Assume that $\nu(\Omega) < +\infty$ and let $u,v\in L^1(\Omega,\nu)$. The following relation between $u$ and $v$ is defined in \cite{BCr2}:
\begin{equation}\label{Def.menormenor}
  u\ll v \ \hbox{ if} \ \int_{\Omega} j(u)\,  d\nu  \leq \int_{\Omega} j(v)
 \, d\nu \ \ \hbox{for every} \ j \in J_0.
\end{equation}
Moreover, the following equivalences are proved in~\cite[Proposition 2.2]{BCr2} (we only give the particular cases that we use):
\begin{equation}\label{bCrProp22laotra}\int_\Omega v\rho(u)d\nu\ge 0\quad \hbox{for every } \rho\in P_0\  \Longleftrightarrow \  u\ll u+\lambda v\quad \hbox{for every } \lambda>0,
\end{equation}
\begin{equation}\label{bCrProp22}\int_\Omega v\rho(u)d\nu\ge 0\quad \hbox{for every } \rho\in P_0\  \Longleftrightarrow \   \int_{\{u<-h\}}vd\nu\le 0\le\int_{\{u>h\}}vd\nu\quad \hbox{for every } h>0.
\end{equation}

\section{Nonlocal stationary problems}\label{lasec2}

  In this section we give our main results concerning the existence and uniqueness of solutions of the nonlocal stationary Problem \eqref{02091131intro001}. We start by recalling the class of nonlocal Leray-Lions type operators and the Neumann boundary operators
  that we will be working with, and which were introduced in~\cite{MST4}.

\subsection{Nonlocal diffusion operators of Leray-Lions type and nonlocal Neumann boundary operators}\label{sub21}
For $1<p<+\infty$, let us consider a function $\a_p:X\times X\times \mathbb{R}\to \mathbb{R}$ such that
$$ (x,y)\mapsto \a_p(x,y,r) \quad \hbox{is measurable for every $r\in\R$;}
$$
\begin{equation}\label{ll002} \hbox{$\a_p(x,y,.)$  is continuous for $\nu\otimes m_x$-a.e $(x,y)\in X\times X$;}
\end{equation}
\begin{equation}\label{llo4}
 \a_p(x,y,r)=-\a_p(y,x,-r) \quad \hbox{for $\nu\otimes m_x$-a.e $(x,y)\in X\times X$ and for every  $r\in\R$;}
\end{equation}
\begin{equation}\label{llo3}
(\a_p(x,y,r)-\a_p(x,y,s))(r-s) > 0 \quad \hbox{for $\nu\otimes m_x$-a.e. $(x,y)\in X\times X$ and for every  $r\neq s$;}
\end{equation}
there exist constants $c_p,C_p>0$ such that
\begin{equation}\label{llo1}
|\a_p(x,y,r)|\le C_p\left(1+|r|^{p-1}\right) \quad \hbox{for $\nu\otimes m_x$-a.e. $(x,y)\in X\times X$ and for every  $r\in\R$,}
\end{equation}
and
\begin{equation}\label{llo2}
\a_p(x,y,r)r\ge c_p\vert r \vert^p \quad \hbox{for $\nu\otimes m_x$-a.e. $(x,y)\in X\times X$ and for every  $r\in\R$.}
\end{equation}

Condition \eqref{llo4} and the last condition imply that
$$
\a_p(x,y,0)=0 \ \hbox{ and } \   \hbox{sign}_0(\a_p(x,y,r))=\hbox{sign}_0(r) \quad \hbox{for $\nu\otimes m_x$-a.e. $(x,y)\in X\times X$ and for every  $r\in\R$}.
$$

 For $u:X\to \mathbb{R}$, let us define $\z_{\a_p,u}:X\times X\rightarrow \R$ by $\z_{\a_p,u}(x,y):=\a_p\left(x,y,\nabla u(x,y)\right)$. Then (recall Definition \ref{nonlocalgraddiv}),
on account of~\eqref{llo4},
$$
\begin{array}{l}
\displaystyle\hbox{div}_m \z_{\a_p,u} (x) =\frac12 \int_{X}\big(\a_p(x,y,u(y)-u(x)) - \a_p(y,x,u(x)-u(y))\big) dm_x(y)
\\[12pt]
\displaystyle
\phantom{\hbox{div}_m\a_p u (x)}
=\int_X \a_p(x,y,u(y)-u(x)) dm_x(y).
\end{array}
$$
For simplicity, we write
$$\hbox{div}_m \a_p u(x)=\hbox{div}_m \z_{\a_p,u} (x).$$

An example of  a function $\a_p$ satisfying the above assumptions  is
$$\a_p(x,y,r):=\frac{\varphi(x)+\varphi(y)}{2}|r|^{p-2}r,$$
where  $\varphi:X\rightarrow \R$ is a measurable function satisfying $0<{c}\le \varphi\le {C}$, where ${c}$ and ${C}$ are constants.
In particular, if $\varphi(x)=2$ for every $ x\in X$,
$$
\hbox{div}_m \a_p u(x) =  \int_{X}  |u(y)-u(x)|^{p-2}(u(y)-u(x))  dm_x(y)=\int_{X}  |\nabla u(x,y)|^{p-2}\nabla u(x,y)  dm_x(y)
$$
is the (nonlocal) $p$-Laplacian operator on the metric random walk space $[X,d,m]$.

Observe that $\hbox{div}_m \a_p u(x)$ defines
  {\it a  kind of Leray--Lions operator for the random walk~$m$}.

   We now recall  the   {\it nonlocal Neumann boundary operators} introduced in~\cite{MST4}. Let us consider a measurable set $W\subset X$ with $\nu(W)>0$. The Gunzburger--Lehoucq type Neumann boundary operator on $\partial_mW$ is given by
 $$
\mathcal{N}^{\a_p}_1 u(x):=  -\int_{W_m} \a_p(x,y,u(y)-u(x)) dm_x(y),    \quad x \in \partial_mW,
$$
 where, taking into account the supports of the $m_x$, we have that, in fact, the integral is being calculated over the nonlocal tubular boundary $\partial_mW\cup\partial_m(X\setminus W)$ of $W$. On the other hand, the Dipierro--Ros-Oton--Valdinoci type  Neumann boundary  operator on $\partial_mW$ is given by
$$
\mathcal{N}^{\a_p}_2 u(x):=  -\int_{W} \a_p(x,y,u(y)-u(x)) dm_x(y)    \quad x \in \partial_mW,
$$
  for which, in this case, the integral is being calculated over the nonlocal boundary $\partial_m(X\setminus W)$ of $X\setminus W$.

For each of these Neumann boundary operators and for $\varphi$ defined on $W_m=W \cup \partial_m W$, we can look for solutions of the following problem
 $$
\left\{ \begin{array}{ll} \gamma\big(u(x))-\hbox{div}_m\a_p u(x) \ni \varphi(x), \quad &  x\in W, \\ [8pt]   \mathcal{N}^{\a_p}_\mathbf{j} u(x)+\beta\big(u(x)\big)\ni \varphi(x),  \quad & x\in\partial_m W, \end{array} \right.
$$
$\mathbf{j}\in\{1,2\}$.
 Observe that, by the reversibility of $\nu$ with respect to $m$ and recalling the definitions of $\partial_m W$ and $W_m$ (Definition \ref{boundaryandclosure}), $m_x(X\setminus W_m)=0$ for $\nu$-a.e. $x\in W$. Indeed,
$$\displaystyle \int_{W}m_x(X\setminus W_m)d\nu(x)=\int_{X\setminus W_m}m_x(W)d\nu(x)=0  . $$
Consequently,
\begin{equation}\label{dom1251}
 \hbox{div}_m\a_p u (x) =\int_{W_m} \a_p(x,y,u(y)-u(x)) dm_x(y) \quad  \hbox{for every $x\in W$.}
\end{equation}

\begin{lemma}\label{convergenciaap}
Let $\Omega\subset X$ be a $\nu$-finite set and let $\{u_k\}_{k\in\N}\subset L^p(\Omega,\nu)$ such that $u_k\stackrel{k}{\longrightarrow} u\in L^p(\Omega,\nu)$ in $L^p(\Omega,\nu)$ and pointwise $\nu$-a.e. in $\Omega$. Suppose also that there exists $h\in L^p(\Omega,\nu)$ such that $|u_k|\le h$ $\nu$-a.e. in $\Omega$. Then
 $$\z_{\a_p,u_k}\stackrel{k}{\longrightarrow} \z_{\a_p,u} \ \hbox{ in } L^{p'}(\Omega\times\Omega,\nu\otimes m_x)$$
 and, in particular,
  $$\int_\Omega \a_p(\cdot,y, \nabla u_{k}(\cdot,y))dm_{(\cdot)}(y)\stackrel{k}{\longrightarrow} \int_\Omega \a_p(\cdot,y, \nabla u(\cdot,y))dm_{(\cdot)}(y) \ \hbox{ in } L^{p'}(\Omega,\nu).$$
\end{lemma}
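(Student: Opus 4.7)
The plan is to combine pointwise convergence with a uniform $L^{p'}$-dominant and apply the dominated convergence theorem on $(\Omega\times\Omega,\nu\otimes m_x)$; the second statement will then follow from the first by Hölder's inequality in the probability variable $m_x$.

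First I would transfer pointwise convergence from $\nu$ to $\nu\otimes m_x$. Let $N\subset\Omega$ be a $\nu$-null set outside which $u_k\to u$ pointwise and $|u_k|\le h$ for every $k$. The set $\{(x,y)\in\Omega\times\Omega:x\in N\}$ has $\nu\otimes m_x$-measure $\int_N m_x(\Omega)\,d\nu(x)\le\nu(N)=0$. For the other projection, the invariance of $\nu$ with respect to $m$ gives
$$\nu\otimes m_x\bigl(\{(x,y)\in\Omega\times\Omega:y\in N\}\bigr)=\int_X m_x(N)\,d\nu(x)=\nu(N)=0.$$
Hence $u_k(y)-u_k(x)\to u(y)-u(x)$ and $|u_k(y)-u_k(x)|\le h(x)+h(y)$ for $\nu\otimes m_x$-a.e.\ $(x,y)\in\Omega\times\Omega$, and by the continuity assumption \eqref{ll002},
$$\a_p(x,y,u_k(y)-u_k(x))\longrightarrow\a_p(x,y,u(y)-u(x))\quad\nu\otimes m_x\text{-a.e.}$$

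Next I would produce an $L^{p'}$-dominant. Using \eqref{llo1} together with the bound above and $(p-1)p'=p$,
$$|\a_p(x,y,u_k(y)-u_k(x))|^{p'}\le C_p^{p'}\bigl(1+(h(x)+h(y))^{p-1}\bigr)^{p'}\le C\bigl(1+h(x)^p+h(y)^p\bigr),$$
for a constant $C$ depending only on $p$ and $C_p$. Since $\nu(\Omega)<\infty$, the constant is integrable. For the $h(x)^p$ term, $\int_{\Omega\times\Omega}h(x)^p\,d(\nu\otimes m_x)=\int_\Omega h(x)^p m_x(\Omega)\,d\nu(x)\le\|h\|_{L^p(\Omega,\nu)}^p$, and for the $h(y)^p$ term the reversibility $dm_x(y)d\nu(x)=dm_y(x)d\nu(y)$ gives
$$\int_{\Omega\times\Omega}h(y)^p\,d(\nu\otimes m_x)=\int_\Omega h(y)^p m_y(\Omega)\,d\nu(y)\le\|h\|_{L^p(\Omega,\nu)}^p.$$
Therefore the dominant belongs to $L^1(\Omega\times\Omega,\nu\otimes m_x)$, and the dominated convergence theorem yields convergence in $L^{p'}(\Omega\times\Omega,\nu\otimes m_x)$.

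For the second convergence, since $m_x$ is a probability measure, Hölder's inequality with exponents $p,p'$ applied pointwise in $x$ gives
$$\left|\int_\Omega\bigl[\a_p(x,y,u_k(y)-u_k(x))-\a_p(x,y,u(y)-u(x))\bigr]dm_x(y)\right|^{p'}\le\int_\Omega|\a_p(x,y,u_k(y)-u_k(x))-\a_p(x,y,u(y)-u(x))|^{p'}dm_x(y).$$
Integrating in $\nu$ reduces the assertion to the first part. The only delicate point is the transfer of the $\nu$-a.e.\ information to the product measure, which relies essentially on the invariance (not just reversibility) of $\nu$; everything else is a routine application of dominated convergence and the growth bound \eqref{llo1}.
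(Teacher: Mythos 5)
Your proof is correct and follows essentially the same route as the paper's: transfer the $\nu$-a.e.\ information to the product measure via invariance/reversibility, use the continuity hypothesis \eqref{ll002} for pointwise convergence of $\a_p$, dominate by $C(1+h(x)^p+h(y)^p)$ via the growth bound \eqref{llo1} and $(p-1)p'=p$, and conclude by dominated convergence; the Jensen/Hölder reduction of the second assertion to the first is also the implicit step in the paper.
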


Taking a subsequence if necessary, the $\nu$-a.e. pointwise convergence and the domination by the function $h$ in the hypotheses are a consequence of the convergence in $L^p(\Omega,\nu)$.

\begin{proof}
Let $A\subset\Omega$ be a $\nu$-null set such that $|u_k(x)|\le h(x)<+\infty$ for every $x\in\Omega\setminus A$ and every $k\in\N$, and such that $u_k(x)\stackrel{k}{\longrightarrow} u(x)$ for every $x\in\Omega\setminus A$.
By \eqref{ll002}, there exists a $\nu\otimes m_x$-null set $N_1 \subset \Omega\times\Omega$ such that $\a_p(x,y,\cdot)$ is continuous for every $(x,y)\in (\Omega\times\Omega)\setminus N_1$. Therefore,
 $$\a_p(x,y, u_{k}(y)-u_{k}(x))\stackrel{k}{\longrightarrow} \a_p(x,y, u(y)-u(x))$$
 for every $(x,y)\in(\Omega\times\Omega)\setminus (N_1\cup (A\times \Omega)\cup (\Omega\times A))$, where, by the reversibility of $\nu$ with respect to $m$, $N_1\cup (A\times \Omega)\cup (\Omega\times A)$ is also $\nu\otimes m_x$-null. Moreover, by \eqref{llo1}, there exists a $\nu\otimes m_x$-null set $N_2 \subset \Omega\times\Omega$ such that
 $$\begin{array}{rl}
 \displaystyle|\a_p(x,y,u_{k}(x)-u_{k}(y))|& \displaystyle\le C_p(1+|u_{k}(x)-u_{k}(y)|^{p-1})\le \widetilde{C}(1+|u_{k}(x)|^{p-1}+|u_{k}(y)|^{p-1}) \\ [8pt]
 & \displaystyle\le \widetilde{C}(1+|h(x)|^{p-1}+|h(y)|^{p-1})
 \end{array}$$
 for every $(x,y)\in (\Omega\times\Omega)\setminus (N_2\cup(A\times \Omega)\cup (\Omega\times A))$ and some constant $\widetilde{C}$, where, again, $N_2\cup (A\times \Omega)\cup (\Omega\times A)$ is $\nu\otimes m_x$-null. Then, taking $(x,y)\in(\Omega\times\Omega)\setminus(N_1\cup N_2\cup(A\times \Omega)\cup (\Omega\times A))$,
  $$\a_p(x,y, u_{k}(y)-u_{k}(x))\stackrel{k}{\longrightarrow} \a_p(x,y, u(y)-u(x))$$
  and
  $$|\a_p(x,y,u_{k}(x)-u_{k}(y))|\le \widetilde{C}(1+|h(x)|^{p-1}+|h(y)|^{p-1}).$$
   Now, by the invariance of $\nu$ with respect to $m$, since $h\in L^{p}(\Omega,m_x)$ and $\nu(\Omega)<+\infty$, we have that, for  $\tilde h(x,y):= 1+|h(x)|^{p-1}+|h(y)|^{p-1}$, $\tilde h\in L^{p'}(\Omega\times\Omega,\nu\otimes m_x)$, so we may apply the dominated convergence theorem to conclude.
   \qed\end{proof}

\subsection{Existence and uniqueness of solutions of doubly nonlinear stationary problems under nonlinear boundary conditions}\label{efzly}

As mentioned in the introduction the aim here is to study the existence and uniqueness of solutions of the problem
 \begin{equation}\label{02091131}
\left\{ \begin{array}{ll} \gamma\big(u(x))-\hbox{div}_m\a_p u(x) \ni \varphi(x), \quad &  x\in W, \\ [8pt]  \mathcal{N}^{\a_p}_1  u(x)+\beta\big(u(x)\big)\ni \varphi(x),  \quad & x\in\partial_m W, \end{array} \right.
\end{equation}
  where $W\subset X$ is $m$-connected and $\nu(W_m)<+\infty$.  See~\cite{AIMTq} and~\cite{BCrS} for the reference local models. In Subsection~\ref{secdipierro001} we address this problem but with the nonlocal Neumann boundary operator $\mathcal{N}^{\a_p}_2$ instead.

Problem \eqref{02091131} is a particular case (recall~\eqref{dom1251}) of the following general, and interesting by itself, problem. Let $\Omega_1,\Omega_2\subset X$ be disjoint measurable non-$\nu$-null sets and let $$\Omega:=\Omega_1\cup \Omega_2.$$ Given $\varphi\in L^{1}(\Omega,\nu)$ we consider the problem
\begin{equation}\label{02091131general}
(GP_\varphi^{ \a_p,\gamma,\beta})\quad\left\{ \begin{array}{ll}\displaystyle \gamma\big(u(x))- \int_{\Omega} \a_p(x,y,u(y)-u(x)) dm_x(y) \ni \varphi(x), \quad &  x\in\Omega_1, \\ [12pt]
\displaystyle \beta\big(u(x)\big)- \int_{\Omega} \a_p(x,y,u(y)-u(x)) dm_x(y)\ni\varphi(x),  \quad & x\in\Omega_2. \end{array} \right.
\end{equation}
  For simplicity, we generally use the notation $(GP_\varphi)$ in place of $(GP_\varphi^{ \a_p,\gamma,\beta})$. However, we use the more detailed notation further on.    Moreover, we make the following assumptions.

\begin{assumption}\label{assumptionOmega}
We assume that $\Omega=\Omega_1\cup\Omega_2$ is $m$-connected and $\nu(\Omega)<+\infty$.
\end{assumption}

\begin{remark}\label{omegamconnectedthenmxomegapos}
  Observe that, given an $m$-connected set $\Omega\subset X$ (recall Definition \ref{defomegaconnected}), $m_x(\Omega)>0$ for $\nu$-a.e. $x\in \Omega$. Indeed, if
$$N:=\{x\in\Omega \, : \, m_x(\Omega)=0\},$$
then
$$L_m(N,\Omega)=0,$$
thus $\nu(N)=0$.
\end{remark}

\begin{assumption}\label{assumption4}
  Let
$$\mathcal{N}_\perp^\Omega:=\left\{x\in \Omega \, :   (m_x\res\Omega) \perp  (\nu\res\Omega) \right\},$$
where the notation $(m_x\res\Omega) \perp  (\nu\res\Omega)$ means that $m_x\res\Omega$ and $\nu\res\Omega$ are mutually singular.
We assume that
$$\nu\left(\mathcal{N}_\perp^\Omega\right)=0.$$
\end{assumption}

\begin{remark}

Note that, for $x\in\Omega$ such that $m_x(\Omega)>0$, if $m_x\ll \nu$ (i.e., $m_x$ is   absolutely continuous with respect to $\nu$, do not confuse the use of $\ll$ in this context with its use in the notation in Subsection~\ref{secacc}) then $(m_x\res\Omega)\not\perp (\nu\res\Omega)$. Therefore, by Remark \ref{omegamconnectedthenmxomegapos}, if $m_x\ll\nu$ for $\nu$-a.e. $x\in\Omega$ then $\nu\left(\mathcal{N}_\perp^\Omega\right)=0$. Hence, the above condition is weaker than assuming that $m_x\ll \nu$ for $\nu$-a.e. $x\in\Omega$.
\end{remark}

\begin{assumption}\label{assumption2}
We assume, together with $0\in\gamma(0)\cap\beta(0)$, that
$$\mathcal{R}_{\gamma,\beta}^-< \mathcal{R}_{\gamma,\beta}^+,$$
where
$$\begin{array}{c}
\mathcal{R}_{\gamma,\beta}^-:=\nu(\Omega_1)\inf \mbox{Ran}(\gamma) + \nu(\Omega_2)\inf \mbox{Ran}(\beta),
\\[6pt]
 \mathcal{R}_{\gamma,\beta}^+:=\nu(\Omega_1)\sup \mbox{Ran}(\gamma) + \nu(\Omega_2)\sup \mbox{Ran}(\beta).
 \end{array}
$$
\end{assumption}

\begin{assumption}\label{assumption3}
We assume that the following generalised Poincar\'{e} type inequality holds:
For every $0<l\le \nu(\Omega)$,   there exists a constant $\Lambda>0$ such that,  for every $u \in L^p(\Omega,\nu)$ and any measurable set $Z\subset \Omega$ with $\nu(Z)\ge l$,
$$  \left\Vert  u \right\Vert_{L^p(\Omega,\nu)}  \leq \Lambda\left(\left(\int_{ \Omega\times\Omega} |u(y)-u(x)|^p dm_x(y) d\nu(x) \right)^{\frac1p}+\left| \int_Z u\,d\nu\right|\right).
$$
{\rm This assumption holds true in many important examples (see Appendix~\ref{secineqpoin}).}
\end{assumption}

From now on in this subsection we work under Assumptions~\ref{assumption1} to~\ref{assumption3}.

\begin{definition}\label{defsol01}
A solution of $(GP_\varphi)$ is  a pair $[u,v]$ with $u\in L^p(\Omega,\nu)$ and   $v\in L^{p'}(\Omega,\nu)$  such that
\\
1. $v(x)\in\gamma(u(x))\ \hbox{ for $\nu$-a.e. } x\in\Omega_1,$
\\[6pt]
2. $v(x)\in\beta(u(x))\ \hbox{ for $\nu$-a.e. } x\in\Omega_2,$
\\[6pt]
3. $[(x,y)\mapsto a_p(x,y,u(y)-u(x))]\in L^{p'}(\Omega\times\Omega,\nu\otimes m_x)$,
\\[6pt]
4. and
$$
 v(x) - \int_{\Omega} \a_p(x,y,u(y)-u(x)) dm_x(y)=\varphi(x), \quad x \in \Omega.
$$

A subsolution (supersolution) of $(GP_\varphi)$ is  a pair $[u,v]$ with $u\in L^p(\Omega,\nu)$ and $v\in L^1(\Omega,\nu)$ satisfying 1., 2., 3.   and
$$
 v(x) - \int_{\Omega} \a_p(x,y,u(y)-u(x)) dm_x(y)\le \varphi(x), \quad x \in \Omega,
$$
$$
 \left(v(x) - \int_{\Omega} \a_p(x,y,u(y)-u(x)) dm_x(y)\ge \varphi(x), \quad x \in \Omega\right).
$$
\end{definition}

\begin{remark}[Integration by parts formula]\label{remmon}
The following integration by parts formula which results from the reversibility of $\nu$ with respect to~$m$, can be easily proved. Let $u$ be a measurable function such that
   $$[(x,y)\mapsto \a_p(x,y,u(y)-u(x))]\in L^{q}( \Omega\times\Omega,\nu\otimes m_x)$$ and let $w \in L^{q'}(\Omega,\nu)$. Then
$$ \begin{array}{l}
 \displaystyle
-\int_{\Omega}\int_{\Omega}  \a_p(x,y,u(y)- u (x))dm_x(y)w(x)d\nu(x)   \\ [14pt]
= \displaystyle\frac{1}{2} \int_{\Omega\times\Omega} \a_p(x,y,u(y)-u(x)) (w(y) - w(x)) d(\nu\otimes m_x)(x,y) .
\end{array}
$$

Let us see, formally, the way in which we use the above integration by parts formula in what follows. Suppose that we are in the following situation:
$$
 \left\{ \begin{array}{ll} \displaystyle-\int_{\Omega}  \a_p(x,y,u(y)- u (x))dm_x(y) = f(x), \quad & x\in\Omega_1, \\[12pt]
\displaystyle   -\int_{\Omega} \a_p(x,y,u(y)- u (x))dm_x(y)  = g(x), \quad & x \in \Omega_2. \end{array} \right.
$$
Then, multiplying both equations by a test function $w$, integrating them with respect to $\nu$ over $\Omega_1$ and $\Omega_2$, respectively, adding them and using the integration by parts formula we get
$$ \begin{array}{l}
 \displaystyle
  \frac{1}{2} \int_{\Omega\times\Omega} \a_p(x,y,u(y)-u(x)) (w(y) - w(x)) d(\nu\otimes m_x)(x,y) \\[14pt]  \displaystyle = \int_{\Omega_1} f(x)w(x)d\nu(x) +\int_{\Omega_2} g(x)w(x) d\nu(x) .
\end{array}
$$
Moreover, as a consequence of these computations and \eqref{llo3}, taking $u=u_i$, $f=f_i$ and $g=g_i$, $i=1,2$, in the above system and for every nondecreasing function $T:\mathbb{R}\to \mathbb{R}$ we obtain
\begin{equation}\label{IntByPartsConsequenceNondecreasing}
\begin{array}{l}
 \displaystyle
 \int_{\Omega_1} (f_1(x)-f_2(x))T(u_1(x)-u_2(x))d\nu(x) +\int_{\Omega_2} (g_1(x)-g_2(x))T(u_1(x)-u_2(x)) d\nu(x)  \\ [14pt]
 \displaystyle
 =\frac{1}{2} \int_{\Omega\times\Omega} \big(\a_p(x,y,u_1(y)-u_1(x))-\a_p(x,y,u_2(y)-u_2(x))\big)
  \\[14pt] \displaystyle
  \hspace{80pt} \times\big(T(u_1(y) - u_2(y)) -T(u_1(x)-u_2(x))\big)d(\nu\otimes m_x)(x,y)\,\ge 0 .
\end{array}
\end{equation}

  \end{remark}

The next result gives a maximum principle  for solutions of Problem $(GP_\varphi)$ given in~\eqref{02091131general} and, consequently, also for solutions of Problem~\eqref{02091131}.

\begin{theorem}[Contraction and comparison principle]\label{maxandcont01}
Let $\varphi_1$, $\varphi_2\in L^{1}(\Omega,\nu)$. Let $[u_{1},v_1]$ be a subsolution of $(GP_{\varphi_1})$ and $[u_{2},v_2]$ be a supersolution of $(GP_{\varphi_2})$. Then,
\begin{equation}\label{mx01}\int_\Omega (v_1-v_2)^+d\nu \le \int_\Omega (\varphi_1-\varphi_2)^+d\nu .
\end{equation}
  Moreover, if $\varphi_1\le \varphi_2$ with $\varphi_1\neq\varphi_2$, then   $v_1\le v_2$, $v_1\neq v_2$,  and $u_1\le u_2$ $\nu$-a.e. in $\Omega$.

  Furthermore, if $\varphi_1= \varphi_2$ and $[u_i,v_i]$ is a solution of $(GP_{\varphi_i})$, $i=1,2$, then $v_1=v_2$ $\nu$-a.e. in $\Omega$ and   $u_1-u_2$ is $\nu$-a.e. equal to a constant.
\end{theorem}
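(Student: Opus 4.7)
The strategy is the classical one for doubly-nonlinear problems in the B\'enilan-Crandall-Sacks tradition: subtract the pointwise inequalities defining subsolution and supersolution and test the result against a smooth nondecreasing approximation of a sign function. Setting
$$B_i(x):=\int_\Omega \a_p(x,y,u_i(y)-u_i(x))\,dm_x(y),\qquad i=1,2,$$
subtraction yields $(v_1-v_2)(x)-(B_1-B_2)(x)\le (\varphi_1-\varphi_2)(x)$ for $\nu$-a.e.\ $x\in\Omega$. Let $T_\epsilon\in C^1(\R)$ be nondecreasing with values in $[0,1]$ and $T_\epsilon\to \chi_{(0,+\infty)}$ pointwise as $\epsilon\downarrow 0$.

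Multiplying the subtracted inequality by $T_\epsilon(u_1(x)-u_2(x))\ge 0$ and integrating over $\Omega$, the diffusion piece is handled via the integration by parts formula of Remark \ref{remmon}, which converts it into
$$\frac12\int_{Q_1}\bigl[\a_p(x,y,u_1(y)-u_1(x))-\a_p(x,y,u_2(y)-u_2(x))\bigr]\bigl[T_\epsilon((u_1-u_2)(y))-T_\epsilon((u_1-u_2)(x))\bigr]\,d(\nu\otimes m_x).$$
Writing $a:=u_1(y)-u_1(x)$ and $b:=u_2(y)-u_2(x)$, the first bracket has the sign of $a-b$ by the strict monotonicity \eqref{llo3}, and the second bracket has the same sign since $T_\epsilon$ is nondecreasing and its argument at $y$ minus at $x$ equals exactly $a-b$; hence the integrand is nonnegative. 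This yields the Kato-type inequality
$$\int_\Omega (v_1-v_2)\,T_\epsilon(u_1-u_2)\,d\nu\le\int_\Omega (\varphi_1-\varphi_2)\,T_\epsilon(u_1-u_2)\,d\nu,$$
and $\epsilon\downarrow 0$ by dominated convergence (using $v_1,v_2,\varphi_1,\varphi_2\in L^1(\Omega,\nu)$) gives
$$\int_{\{u_1>u_2\}}(v_1-v_2)\,d\nu\le\int_\Omega(\varphi_1-\varphi_2)^+\,d\nu.$$
The monotonicity of $\gamma$ and $\beta$ forces $\{v_1>v_2\}\subseteq\{u_1\ge u_2\}$ and $v_1\ge v_2$ on $\{u_1>u_2\}$; the residual set $\{u_1=u_2,\,v_1>v_2\}$ is absorbed by a supplementary test with a smooth nondecreasing approximation of $\mathrm{sign}_0^+(v_1-v_2)$, whose integration by parts contribution is again controlled by the monotonicity of $\a_p$. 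This produces the contraction estimate \eqref{mx01}.

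For the comparison, if $\varphi_1\le\varphi_2$ then \eqref{mx01} forces $v_1\le v_2$ $\nu$-a.e.; $u_1\le u_2$ follows by combining this with the monotonicity of $\gamma,\beta$ and a pointwise analysis on $\{v_1=v_2\}$ using the sub/super equations. The strict statement $v_1\neq v_2$ under $\varphi_1\neq\varphi_2$ is obtained by contradiction: if $v_1=v_2$, the subtracted equations give $B_1\ge B_2$, and then testing with $(u_1-u_2)^+$ via \eqref{llo3} together with the $m$-connectedness would force $u_1-u_2$ essentially constant, contradicting $\varphi_1\not\equiv \varphi_2$. For the uniqueness statement with $\varphi_1=\varphi_2$, applying \eqref{mx01} in both directions gives $v_1=v_2$; subtracting the equations then yields $B_1=B_2$ $\nu$-a.e., and testing against $u_1-u_2$ via Remark \ref{remmon} produces
$$\int_{Q_1}\bigl[\a_p(x,y,u_1(y)-u_1(x))-\a_p(x,y,u_2(y)-u_2(x))\bigr]\bigl[(u_1-u_2)(y)-(u_1-u_2)(x)\bigr]\,d(\nu\otimes m_x)=0.$$
By \eqref{llo3}, $(u_1-u_2)(y)=(u_1-u_2)(x)$ for $(\nu\otimes m_x)$-a.e.\ $(x,y)$, and then the $m$-connectedness of $\Omega$ from Assumption \ref{assumptionOmega} (via Definition \ref{defomegaconnected}) forces $u_1-u_2$ to be $\nu$-a.e.\ constant.

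The main obstacle is the handling of the set $\{u_1=u_2,\,v_1>v_2\}$ in the contraction step and, dually, of deducing $u_1\le u_2$ from $v_1\le v_2$ in the comparison step. Both reflect the genuinely doubly-nonlinear character of the problem: the multivaluedness of $\gamma$ and $\beta$ decouples the $u$-level and $v$-level comparisons, so one must carefully combine test functions at both levels in the spirit of the B\'enilan-Crandall-Sacks approach. Once this selection-type argument is carried out, the remainder is routine integration by parts and monotonicity.
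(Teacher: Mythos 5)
The core Kato step of your proposal (testing against $T_\epsilon(u_1-u_2)$ and using Remark \ref{remmon} with \eqref{llo3}) is correct and matches the spirit of the paper, but the supplementary step you propose to absorb the residual set $\{u_1=u_2,\,v_1>v_2\}$ does not work, and this is precisely where the doubly-nonlinear difficulty lives.

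Concretely, you suggest testing against a nondecreasing regularisation of $\mathrm{sign}_0^+(v_1-v_2)$ and claim its $\a_p$-contribution is "again controlled by the monotonicity of $\a_p$." It is not: after integration by parts the $\a_p$-term has the sign of
$$\bigl(\a_p(x,y,u_1(y)-u_1(x))-\a_p(x,y,u_2(y)-u_2(x))\bigr)\bigl(w(y)-w(x)\bigr),$$
and \eqref{llo3} controls the sign of the first factor in terms of $(u_1-u_2)(y)-(u_1-u_2)(x)$; for the product to be nonnegative you need $w$ to be a nondecreasing function of $u_1-u_2$ evaluated at each point. The function $w=\mathrm{sign}_0^+(v_1-v_2)$ is not of this form: because $\gamma,\beta$ are multivalued, it is perfectly possible to have $(u_1-u_2)(y)>(u_1-u_2)(x)$ while $(v_1-v_2)(y)=0<(v_1-v_2)(x)$ (take both $u_i(y)$ away from a flat part, both $u_i(x)$ on the same flat part with $v_1(x)>v_2(x)$), making $w(y)-w(x)=-1$ and the integrand strictly negative. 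So there is no sign control, and your supplementary inequality is unproved.

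The paper resolves this by building one test function that couples the two scales: it multiplies by $\frac1k T_k^+\!\bigl(u_1-u_2+k\,\mathrm{sign}_0^+(v_1-v_2)\bigr)$, drops the $\a_p$-term by Fatou's lemma as $k\downarrow 0$ (the pointwise liminf of the $\a_p$-integrand is nonnegative because the limit test function equals $1$ on $\{u_1>u_2\}$ and $\mathrm{sign}_0^+(v_1-v_2)$ only on $\{u_1=u_2\}$, which restores the required monotonicity in $u_1-u_2$), and obtains $\int_\Omega(v_1-v_2)^+\,d\nu$ directly in the limit by dominated convergence using the graph monotonicity pointwise. This is the key device missing from your argument.

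A secondary gap: your route to $u_1\le u_2$ under $\varphi_1\le\varphi_2$, $\varphi_1\neq\varphi_2$, via "a pointwise analysis on $\{v_1=v_2\}$" is too weak. Pointwise, the graphs give no information on $\{v_1=v_2\}$. The paper instead shows that the $\a_p$-integral against $(u_1-u_2)^+$-differences vanishes, deduces from \eqref{llo3} that $(u_1-u_2)^+(y)=(u_1-u_2)^+(x)$ for $\nu\otimes m_x$-a.e.\ $(x,y)$, and then propagates a hypothetical positive value over all of $\Omega$ using $m$-connectedness together with Assumption \ref{assumption4} (the non-singularity of $m_x\res\Omega$ with respect to $\nu\res\Omega$) to reach a contradiction with $v_1\neq v_2$. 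That nonlocal spreading argument, which you invoke only for the final uniqueness step, is also required here. Your uniqueness argument for $\varphi_1=\varphi_2$ is correct and matches the paper.
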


\begin{proof}
 By hypothesis,
$$ v_1(x)-v_2(x) - \int_{\Omega} (\a_p(x,y,u_1(y)-u_1(x))-\a_p(x,y,u_2(y)-u_2(x))) dm_x(y)\le\varphi_1(x)-\varphi_2(x)$$
for $x \in \Omega$. Let $k>0$ and $T_k:\R\rightarrow [-k,k]$ be the truncation operator defined as
\begin{equation}\label{Def.Trunc.Op}
T_k(r):= \left\{ \begin{array}{lll}
-k \quad &\hbox{if} \ r< -k,\\[4pt]
r \quad &\hbox{if} \ \vert r \vert \leq k, \\[4pt]k \quad &\hbox{if} \ r >k,  \end{array}\right.
\end{equation}
and denote $T_k^+(s):=(T_k(s))^+$. Multiplying the above inequality by $\frac1k T_k^+(u_{1}-u_{2}+ k \, \hbox{sign}_0^+(v_1-v_2))$ and integrating over $\Omega$ we get
\begin{equation}\label{comparisonineq}\begin{array}{l}
\displaystyle\int_{\Omega}\left(v_1(x)-v_2(x)\right)\frac1k T_k^+(u_{1}(x)-u_{2}(x)+ k \, \hbox{sign}_0^+(v_1(x)-v_2(x))) d\nu(x) \\ [14pt]
\displaystyle \hspace{10pt}
- \int_{\Omega}\int_{\Omega} (\a_p(x,y,u_1(y)-u_1(x))-\a_p(x,y,u_2(y)-u_2(x))) dm_x(y)\\ [12pt]
\displaystyle \hspace{80pt} \times\frac1k T_k^+(u_{1}(x)-u_{2}(x)+ k \, \hbox{sign}_0^+(v_1(x)-v_2(x)))d\nu(x) \\ [14pt]
\displaystyle \le \int_{\Omega}(\varphi_1(x)-\varphi_2(x)) \frac1k T_k^+(u_{1}(x)-u_{2}(x)+ k \, \hbox{sign}_0^+(v_1(x)-v_2(x)))d\nu(x)\\ [14pt]
\displaystyle \le \int_{\Omega}(\varphi_1(x)-\varphi_2(x))^+ d\nu(x).
\end{array}\end{equation}
Moreover, by the integration by parts formula (Remark~\ref{remmon}),
$$ \begin{array}{l}
 \displaystyle - \int_{\Omega}\int_{\Omega} (\a_p(x,y,u_1(y)-u_1(x))-\a_p(x,y,u_2(y)-u_2(x)))dm_x(y)
\\ [12pt]
\displaystyle \hspace{80pt} \times\frac1k T_k^+(u_{1}(x)-u_{2}(x)+ k \, \hbox{sign}_0^+(v_1(x)-v_2(x)))  d\nu(x)\\ \\
 \displaystyle=\frac12 \int_{\Omega}\int_{\Omega} (\a_p(x,y,u_1(y)-u_1(x))-\a_p(x,y,u_2(y)-u_2(x)))
\\ [12pt]
\displaystyle \hspace{80pt} \times\Big(\frac1k T_k^+(u_{1}(y)-u_{2}(y)+ k \, \hbox{sign}_0^+(v_1(y)-v_2(y)))
\\ [12pt]
\hspace{100 pt}\displaystyle -\frac1k T_k^+(u_{1}(x)-u_{2}(x)+ k \, \hbox{sign}_0^+(v_1(x)-v_2(x)))\Big)  dm_x(y)d\nu(x).
\end{array}$$
  Now, since the integrand on the right hand side is bounded from below by an integrable function,
we can apply Fatou's lemma to get (recall the last observation in Remark~\ref{remmon})
$$ \begin{array}{l}
\displaystyle  \liminf_{k\to 0^+}- \int_{\Omega}\int_{\Omega} (\a_p(x,y,u_1(y)-u_1(x))-\a_p(x,y,u_2(y)-u_2(x)))dm_x(y)
\\ [12pt]
\displaystyle \hspace{75pt} \times\frac1k T_k^+(u_{1}(x)-u_{2}(x)+ k \, \hbox{sign}_0^+(v_1(x)-v_2(x)))  d\nu(x)\ge 0.
\end{array}$$
Hence,  taking limits in~\eqref{comparisonineq}, we get
$$\begin{array}{l}
\displaystyle \int_{\Omega}\left(v_1(x)-v_2(x)\right)^+ d\nu(x) \\ [12pt]
\displaystyle=\lim_{k\to 0^+}\int_{\Omega}\left(v_1(x)-v_2(x)\right)\frac1k T_k^+(u_{1}(x)-u_{2}(x)+ k \, \hbox{sign}_0^+(v_1(x)-v_2(x))) d\nu(x) \\ [12pt]
\displaystyle \le \int_{\Omega}(\varphi_1(x)-\varphi_2(x))^+ d\nu(x),
\end{array}$$
and~\eqref{mx01} is proved.

Take now $\varphi_1\le \varphi_2$ with $\varphi_1\neq \varphi_2$, then, by \eqref{mx01},  $v_1\le v_2$ $\nu$-a.e. in $\Omega$. Now, since $[u_{1},v_1]$ is a subsolution of $(GP_{\varphi_1})$
$$ v_1(x) - \int_{\Omega} \a_p(x,y,u_1(y)-u_1(x)) dm_x(y)\le\varphi_1(x)$$
thus
$$ \int_\Omega v_1(x)d\nu(x) - \underbrace{\int_\Omega \int_{\Omega} \a_p(x,y,u_1(y)-u_1(x)) dm_x(y)d\nu(x)}_{=0} \le \int_\Omega \varphi_1(x)d\nu(x).$$
Therefore, with the same calculation for $[u_{2},v_2]$,
$$ \int_\Omega v_1(x)d\nu(x)  \le \int_\Omega \varphi_1(x)d\nu(x)<\int_\Omega \varphi_2(x)d\nu(x)\le \int_\Omega v_2(x)d\nu(x)$$
thus $v_1\neq v_2$.
Now, since $(\varphi_1-\varphi_2)^+=0$ and $(v_1-v_2)^+=0$, from \eqref{comparisonineq} we get that
$$\begin{array}{l}
\displaystyle\int_{\Omega}\left(v_1(x)-v_2(x)\right)\frac1k T_k^+(u_{1}(x)-u_{2}(x))) d\nu(x) \\ [14pt]
\displaystyle \hspace{10pt}
- \int_{\Omega}\int_{\Omega} (\a_p(x,y,u_1(y)-u_1(x))-\a_p(x,y,u_2(y)-u_2(x)))\frac1k T_k^+(u_{1}(x)-u_{2}(x)) dm_x(y)d\nu(x)\le 0.
\end{array}$$
However, since $v_i(x)\in \gamma(u_i(x))$ for $\nu$-a.e. $x\in\Omega_1$ and $v_i(x)\in \beta(u_i(x))$ for $\nu$-a.e. $x\in\Omega_2$, $i=1,2$, we get, by the monotonicity of the graphs, $u_1(x)\le u_2(x)$ for $\nu$-a.e. $x\in\Omega$ such that $v_1(x)<v_2(x)$. Therefore, $\left(v_1(x)-v_2(x)\right)\frac1k T_k^+(u_{1}(x)-u_{2}(x)))=0$ for $\nu$-a.e. $x\in\Omega$ and thus
$$\begin{array}{l}
\displaystyle
- \int_{\Omega}\int_{\Omega} (\a_p(x,y,u_1(y)-u_1(x))-\a_p(x,y,u_2(y)-u_2(x)))\frac1k T_k^+(u_{1}(x)-u_{2}(x)) dm_x(y)d\nu(x)  \le 0.
\end{array}$$
Now, recalling Remark~\ref{remmon} (that is, integration by parts),  we obtain
$$\begin{array}{l}
\displaystyle
 \int_{\Omega}\int_{\Omega} (\a_p(x,y,u_1(y)-u_1(x))-\a_p(x,y,u_2(y)-u_2(x)))\\[12pt]
 \displaystyle \qquad \quad \times((u_{1}(y)-u_{2}(y))^+-(u_{1}(x)-u_{2}(x))^+) dm_x(y)d\nu(x)  = 0,
\end{array}$$
and thus
\begin{equation}\label{ceroproduct}
(\a_p(x,y,u_1(y)-u_1(x))-\a_p(x,y,u_2(y)-u_2(x)))((u_{1}(y)-u_{2}(y))^+-(u_{1}(x)-u_{2}(x))^+)=0
\end{equation}
for $(x,y)\in(\Omega\times\Omega)\setminus N$ where $N\subset \Omega\times\Omega$ is a $\nu\otimes m_x$-null set.
Let $C\subset \Omega$ be a $\nu$-null set such that the section $N_x:=\{y\in\Omega \, :\, (x,y)\in N\}$ of $N$ is $m_x$-null for every $x\in \Omega\setminus C$ and let us see that $u_1(x)\le u_2(x)$ for every $x\in \Omega\setminus (C\cup\mathcal{N}_\perp^\Omega)$ (recall Assumption \ref{assumption4} for the definition of the $\nu$-null set $\mathcal{N}_\perp^\Omega$). Suppose that there exists  $x_0\in\Omega\setminus (C\cup\mathcal{N}_\perp^\Omega)$  such that $u_1(x_0)-u_2(x_0)>0$. Then, from \eqref{ceroproduct} (and \eqref{llo3}) we get that $u_1(y)-u_2(y)=u_1(x_0)-u_2(x_0)>0$ for every $y\in\Omega\setminus N_{x_0}$.    Let $$S:=\{y\in\Omega \, :\, u_1(y)-u_2(y)=u_1(x_0)-u_2(x_0)\}\supset \Omega\setminus N_{x_0}.$$
Since $x_0\not\in \mathcal{N}_\perp^\Omega$ and   $m_{x_0}(N_{x_0})=0$,
we must have $\nu(S)\ge \nu(\Omega\setminus N_{x_0})>0$. Now, following the same argument as before, if $x\in S$ then $\Omega\setminus N_x\subset S$ thus $m_x(\Omega\setminus S)\le m_x(N_x)=0$ and, therefore,
$$L_m(S,\Omega\setminus S)=0.$$
However, since $\Omega$ is $m$-connected and $\nu(S)>0$ we must have $\nu(\Omega\setminus S)=0$ thus $u_1(y)-u_2(y)=u_1(x_0)-u_2(x_0)>0$ for $\nu$-a.e. $y\in\Omega$. This contradicts that $v_1\le v_2$, $v_1\neq v_2$, $\nu$-a.e. in $\Omega$.

Finally, suppose that $[u_{1},v_1]$ and $[u_{2},v_2]$ are solutions of $(GP_{\varphi})$ for some $\varphi\in L^1(\Omega,\nu)$. Then,
$$ v_1(x)-v_2(x) - \int_{\Omega} (\a_p(x,y,u_1(y)-u_1(x))-\a_p(x,y,u_2(y)-u_2(x))) dm_x(y)=0$$
thus, since $v_1=v_2$ $\nu$-a.e. in $\Omega$,
$$-\int_{\Omega} (\a_p(x,y,u_1(y)-u_1(x))-\a_p(x,y,u_2(y)-u_2(x))) dm_x(y)=0.$$
Multiplying this equation by $u_1-u_2$, integrating over $\Omega$ and using the integration by parts formula as in Remark~\ref{remmon} we get
$$\int_{\Omega}\int_{\Omega} (\a_p(x,y,u_1(y)-u_1(x))-\a_p(x,y,u_2(y)-u_2(x)))(u_1(y)-u_1(x)-(u_2(y)-u_2(x))) dm_x(y)d\nu(x)=0$$
thus, by \eqref{llo3} and positivity,
\begin{equation}\label{ceroagain}
(\a_p(x,y,u_1(y)-u_1(x))-\a_p(x,y,u_2(y)-u_2(x)))(u_1(y)-u_1(x)-(u_2(y)-u_2(x)))=0
\end{equation}
for $(x,y)\in(\Omega\times\Omega)\setminus N'$ where $N'\subset \Omega\times\Omega$ is a $\nu\otimes m_x$-null set. Let $C'\subset \Omega$ be a $\nu$-null set such that the section $N'_x:=\{y\in\Omega \, :\, (x,y)\in N'\}$ of $N'$ is $\nu$-null for every $x\in \Omega\setminus C'$ and let us see that there exists $L\in \R$ such that $u_1(x)- u_2(x)=L$ for $\nu$-a.e. $x\in\Omega$. Let $x_0\in\Omega\setminus C'$, $L:=u_1(x_0)- u_2(x_0)$ and
$$S':=\{y\in\Omega \, :\, u_1(y)-u_2(y)=L \}\supset \Omega\setminus N'_{x_0}.$$
By \eqref{ceroagain},  $\Omega\setminus C'_{x_0}\subset S'$. Proceeding as we did before to prove that $\nu(\Omega\setminus S)=0$ we obtain that $\nu(\Omega\setminus S')=0$. \qed
\end{proof}

  In order to prove the existence of solutions of Problem~\eqref{02091131general} (Theorem~\ref{existenceeli01}) we  first prove the existence of solutions of an approximate problem. Then we  obtain some monotonicity and boundedness properties of the solutions of these approximate problems that  allow us to pass to the limit. This method lets us get around the loss of compactness results in our setting with respect to the local setting. Indeed, we follow ideas used in~\cite{AIMTq}, but, as we have said, making the most of the monotonicity arguments since the Poincar\'{e} type inequalities here only produce boundedness in $L^{p}$ spaces   (versus the boundedness in $W^{1,p}$ spaces obtained in their local setting). This will be done in the following subsections.

\subsubsection{Existence of solutions of an approximate problem}\label{existenciaurnk}

Take $\varphi\in L^{\infty}(\Omega,\nu)$. Let $n, k\in \mathbb{N}$, $K>0$
and $$A:=A_{n,k}:L^p(\Omega,\nu) \rightarrow  L^{p'}(\Omega,\nu)\equiv L^{p'}(\Omega_1,\nu)\times L^{p'}(\Omega_2,\nu)$$ be defined by
$$A(u)= \big(A_1(u),A_2(u)\big),$$ where
$$ \begin{array}{rl}
\displaystyle A_1(u)(x):=&\displaystyle T_K((\gamma_+)_k(u(x)))+T_K((\gamma_-)_n(u(x)))-\int_{\Omega}\a_p(x,y,u(y)-u(x))dm_x(y)\\ [12pt]
& \displaystyle \hspace{10pt} +\frac1n |u(x)|^{p-2}u^+(x) -\frac1k  |u(x)|^{p-2}u^-(x),
\end{array}$$
for $x\in\Omega_1$,
and
$$\begin{array}{rl}
\displaystyle A_2(u)(x):=&\displaystyle T_K((\beta_+)_k(u(x)))+T_K((\beta_-)_n(u(x)))-\int_{\Omega}\a_p(x,y,u(y)-u(x))dm_x(y)\\ [12pt]
& \displaystyle \hspace{10pt} +\frac1n |u(x)|^{p-2}u^+(x)-\frac1k |u(x)|^{p-2}u^-(x),
\end{array}$$
for $x\in \Omega_2$. Here, $T_K$ is the truncation operator defined in \eqref{Def.Trunc.Op} and $(\gamma_+)_k$, $(\gamma_-)_n$, $(\beta_+)_k$ and $(\beta_-)_n$ are Yosida approximations as defined in Subsection \ref{secacc}.

  It is easy to see that $A$ is continuous and, moreover, it is monotone and coercive in $L^p(\Omega, \nu)$.
Indeed, the monotonicity  results from the integration by parts formula (Remark~\ref{remmon}) and  the coercivity results from the following computation (where the term involving $\a_p$ has been neglected because it is nonnegative, as shown in Remark \ref{remmon}):
$$\int_{\Omega} A(u)u d\nu\ge\frac{1}{n}  || u^+||_{L^p(\Omega,\nu)}+\frac{1}{k}|| u^-||_{L^p(\Omega,\nu)} .$$
Therefore,  since $\varphi \in L^{\infty}(\Omega,\nu)\subset L^{p'}(\Omega,\nu)$, by  \cite[Corollary 30]{brezisgrenoble},  there exist $u_{n,k}\in L^{p}(\Omega, \nu)$, $n$, $k\in\N$, such that
$$\big(A_1(u_{n,k}),A_2(u_{n,k})\big)=\varphi.$$
That is,
\begin{equation}\label{E1} \begin{array}{l}T_K((\gamma_+)_k(u_{n,k}(x)))+T_K((\gamma_-)_n(u_{n,k}(x)))
-\displaystyle\int_{\Omega}\a_p(x,y,u_{n,k}(y)-u_{n,k}(x))dm_x(y)\\ [12pt]
\ \displaystyle +\frac{1}{n} |u_{n,k}(x)|^{p-2}u_{n,k}^+(x)   -\frac{1}{k} |u_{n,k}(x)|^{p-2}u_{n,k}^-(x) =\varphi(x)  \  \hbox{ for $x\in\Omega_1,$} \end{array}\end{equation}
and
\begin{equation}\label{E2} \begin{array}{l}T_K((\beta_+)_k(u_{n,k}(x)))+T_K((\beta_-)_n(u_{n,k}(x)))-\displaystyle\int_{\Omega}\a_p(x,y,u_{n,k}(y)-u_{n,k}(x))dm_x(y)\\ [12pt]
\ \displaystyle +\frac{1}{n} |u_{n,k}(x)|^{p-2}u_{n,k}^+(x)   -\frac{1}{k} |u_{n,k}(x)|^{p-2}u_{n,k}^-(x) =\varphi(x)  \  \hbox{ for $x\in\Omega_2$.} \end{array}\end{equation}

  Let $n$, $k\in\N$. We start by proving that $u_{n,k}\in L^\infty(\Omega,\nu)$.
   Set
$$M:=\left((k+n)\Vert \varphi\Vert_{L^\infty(\Omega,\nu)}\right)^{\frac{1}{p-1}}.$$
 Then, multiplying \eqref{E1} and \eqref{E2} by $(u_{n,k}-M)^+$, integrating over $\Omega_1$ and $\Omega_2$, respectively, adding both equations and neglecting the terms which are zero, we get
\begin{equation}\label{rem001}\begin{array}{l}
\displaystyle\int_{\Omega_1} T_K((\gamma_+)_k(u_{n,k}(x))) (u_{n,k}(x)-M)^+ d\nu(x) + \int_{\Omega_2} T_K((\beta_+)_k(u_{n,k}(x))) (u_{n,k}(x)-M)^+ d\nu(x) \\ [12pt]
 \ \ -\displaystyle\int_{\Omega}\int_{\Omega}\a_p(x,y,u_{n,k}(y)-u_{n,k}(x))(u_{n,k}(x)-M)^+dm_x(y)  d\nu(x) \\ [12pt]
 \ \ \displaystyle + \frac{1}{n}\int_\Omega  |u_{n,k}(x)|^{p-2}u_{n,k}^+(x)(u_{n,k}(x)-M)^+d\nu(x)\\ [12pt]
 = \displaystyle\int_{\Omega} \varphi(x)(u_{n,k}(x)-M)^+ d \nu(x).
\end{array}
\end{equation}
Now, by the integration by parts formula (recall Remark \ref{remmon}),
$$\begin{array}{l} \displaystyle -\int_{\Omega}\int_{\Omega}\a_p(x,y,u_{n,k}(y)-u_{n,k}(x))(u_{n,k}(x)-M)^+ dm_x(y)d\nu(x) \\ [12pt]
\displaystyle =\frac12\int_{\Omega}\int_{\Omega}\a_p(x,y,u_{n,k}(y)-u_{n,k}(x))\left((u_{n,k}(y)-M)^+ -(u_{n,k}(x)-M)^+\right) dm_x(y)d\nu(x)\ge 0 . \end{array}
$$
Hence, neglecting nonnegative terms in~\eqref{rem001},  we get
$$
  \int_\Omega  |u_{n,k}(x)|^{p-2}u_{n,k}^+(x)(u_{n,k}(x)-M)^+d\nu(x)  \le n\displaystyle\int_{\Omega} \varphi(x)(u_{n,k}(x)-M)^+ d \nu(x),
$$
 thus
$$  \int_\Omega T_K(|u_{n,k}(x)|^{p-2}u_{n,k}^+(x))(u_{n,k}(x)-M)^+d\nu(x)
 \le n\displaystyle\int_{\Omega} \varphi(x)(u_{n,k}(x)-M)^+ d \nu(x).
$$
Now, subtracting $\displaystyle\int_\Omega  M^{p-1}(u_{n,k}(x)-M)^+d\nu(x)$ from both sides of the above inequality yields
$$\begin{array}{l}
\displaystyle \int_\Omega \left(T_K(|u_{n,k}(x)|^{p-2}u_{n,k}^+(x))-M^{p-1}\right)(u_{n,k}(x)-M)^+d\nu(x) \\ [12pt]
 \le \displaystyle  n\int_{\Omega}\left(\varphi(x)-\frac1n M^{p-1}\right)(u_{n,k}(x)-M)^+ d \nu(x)\le 0
\end{array}$$
 and, consequently,   taking $K>M$, we get
$$u_{n,k}\le M\quad\nu\hbox{-a.e. in }\hbox{$\Omega$}.$$
Similarly,   taking $w=(u_{n,k}+M)^-$, we get
$$\begin{array}{l}
\displaystyle \int_\Omega \left(T_K(|u_{n,k}(x)|^{p-2}u_{n,k}^-(x))+M^{p-1}\right)(u_{n,k}(x)+M)^-d\nu(x) \\ [12pt]
 \ge \displaystyle  k\int_{\Omega}\left(\varphi(x)+\frac1k M^{p-1}\right)(u_{n,k}+M)^- d \nu(x)\ge 0
\end{array}$$
which yields,    taking also $K>M$,
$$
u_{n,k}\ge -M\quad\nu\hbox{-a.e. in }\hbox{$\Omega$}.
$$
Therefore,
$$
\Vert u_{n,k}\Vert_{ L^\infty(\Omega,\nu)}\le M
$$
as desired.

Now, taking
$$K>\max\left\{M, (\gamma_+)_k(M), -(\gamma_-)_k(-M), (\beta_+)_n(M), -(\beta_-)_n(-M)\right\},$$
equations \eqref{E1} and \eqref{E2} yield
\begin{equation}\label{EE1} \begin{array}{l}(\gamma_+)_k(u_{n,k}(x))+(\gamma_-)_n(u_{n,k}(x))-\displaystyle\int_{\Omega}\a_p(x,y,u_{n,k}(y)-u_{n,k}(x))dm_x(y)\\ [12pt]
\hspace{10pt} \displaystyle +\frac{1}{n}|u_{n,k}(x)|^{p-2}u_{n,k}^+(x) -\frac{1}{k}|u_{n,k}(x)|^{p-2}u_{n,k}^-(x)=\varphi(x),  \ \ \  \hbox{ $x\in\Omega_1,$}  \end{array}\end{equation}
and
\begin{equation}\label{EE2} \begin{array}{l}(\beta_+)_k(u_{n,k}(x))+(\beta_-)_n(u_{n,k}(x))-\displaystyle\int_{\Omega}\a_p(x,y,u_{n,k}(y)-u_{n,k}(x))dm_x(y)\\ [12pt]
\hspace{10pt} \displaystyle +\frac{1}{n}|u_{n,k}(x)|^{p-2}u_{n,k}^+(x)  -\frac{1}{k}|u_{n,k}(x)|^{p-2}u_{n,k}^-(x)=\varphi(x), \ \ \  \hbox{ $x\in\Omega_2$.}  \end{array}\end{equation}

Take now $\varphi\in L^{p'}(\Omega,\nu)$ and, for $n,k\in \mathbb{N}$, set
\begin{equation}\label{varphink}
\varphi_{n,k}:=\sup\{\inf\{n,\varphi\},-k\}.
\end{equation}
Then, since $\varphi_{n,k}\in L^\infty(\Omega,\nu)$, by the previous computations leading to \eqref{EE1} and \eqref{EE2},  there exists a solution $u_{n,k}\in L^\infty(\Omega,\nu)$ of the following  {\it approximate problem} \eqref{EE1nk}--\eqref{EE2nk}:
\begin{align}
  &\begin{array}{l}(\gamma_+)_k(u_{n,k}(x))+(\gamma_-)_n(u_{n,k}(x))-\displaystyle\int_{\Omega}\a_p(x,y,u_{n,k}(y)-u_{n,k}(x))dm_x(y)\\ [12pt]
 \displaystyle \hspace{10pt}+\frac{1}{n}|u_{n,k}(x)|^{p-2}u_{n,k}^+(x) -\frac{1}{k}|u_{n,k}(x)|^{p-2}u_{n,k}^-(x)=\varphi_{n,k}(x) , \ \ \    x\in\Omega_1,   \end{array}\label{EE1nk}\\[12pt]
  &\begin{array}{l}(\beta_+)_k(u_{n,k}(x))+(\beta_-)_n(u_{n,k}(x))-\displaystyle\int_{\Omega}\a_p(x,y,u_{n,k}(y)-u_{n,k}(x))dm_x(y)\\ [12pt]
 \displaystyle \hspace{10pt} +\frac{1}{n}|u_{n,k}(x)|^{p-2}u_{n,k}^+(x)  -\frac{1}{k}|u_{n,k}(x)|^{p-2}u_{n,k}^-(x)=\varphi_{n,k}(x) , \ \ \   x\in\Omega_2.  \end{array}\label{EE2nk}
\end{align}
Moreover, we obtain the following estimates which will be used later on.
Multiplying \eqref{EE1nk} and \eqref{EE2nk} by $\frac1s T_s(u_{n,k}^+)$, integrating with respect to $\nu$ over $\Omega_1$ and $\Omega_2$, respectively, adding both equations,   applying the integration by parts formula (Remark \ref{remmon}), and letting $s\downarrow 0$, we get, after neglecting some nonnegative terms, that
\begin{equation}\label{cota1}
\frac1n \int_{\Omega}|u_{n,k}|^{p-2}u_{n,k}^+d\nu+\int_{\Omega_1}(\gamma_+)_k(u_{n,k})d\nu+\int_{\Omega_2}(\beta_+)_k(u_{n,k})d\nu \le \int_{\Omega}\varphi_{n,k}^+d\nu \le \int_{\Omega}\varphi^+d\nu.
\end{equation}
Similarly, multiplying by $\frac1s T_s(u_{n,k}^-)$ we get
\begin{equation}\label{cota2}
-\frac1k \int_{\Omega}|u_{n,k}|^{p-2}u_{n,k}^-d\nu+\int_{\Omega_1}(\gamma_-)_n(u_{n,k})d\nu+\int_{\Omega_2}(\beta_-)_n(u_{n,k})d\nu \ge -\int_{\Omega}\varphi_{n,k}^-d\nu \ge -\int_{\Omega}\varphi^-d\nu.
\end{equation}

\subsubsection{Monotonicity of the solutions of the approximate problems}\label{secmon}
Using that $\varphi_{n,k}$ is nondecreasing in $n$ and nonincreasing in~$k$, and thanks to the way in which we have approximated the maximal monotone graphs $\gamma$ and $\beta$, we  obtain monotonicity properties for the solutions of the approximate problems.

Fix $k\in\N$. Let $n_1<n_2$. Multiply equations \eqref{EE1nk} and \eqref{EE2nk} with $n=n_1$ by $(u_{n_1,k}-u_{n_2,k})^+$, integrate with respect to $\nu$ over $\Omega_1$ and $\Omega_2$, respectively, and add both equations. Then, doing the same with $n=n_2$ and subtracting the resulting equation from the one that we have obtained for $n=n_1$ we get
$$
\begin{array}{l}
\displaystyle \int_{\Omega_1}\left((\gamma_+)_k(u_{n_1,k}(x))-(\gamma_+)_k(u_{n_2,k}(x))\right)(u_{n_1,k}(x)-u_{n_2,k}(x))^+d\nu(x)\\[12pt]
\displaystyle \hspace{10pt} +\int_{\Omega_1}\left((\gamma_-)_{n_1}(u_{n_1,k}(x))-(\gamma_-)_{n_2}(u_{n_2,k}(x))\right)(u_{n_1,k}(x)-u_{n_2,k}(x))^+d\nu(x)\\[12pt]
\displaystyle \hspace{10pt} +\int_{\Omega_2}\left((\beta_+)_k(u_{n_1,k}(x))-(\beta_+)_k(u_{n_2,k}(x))\right)(u_{n_1,k}(x)-u_{n_2,k}(x))^+d\nu(x)\\[12pt]
\displaystyle \hspace{10pt} +\int_{\Omega_2}\left((\beta_-)_{n_1}(u_{n_1,k}(x))-(\beta_-)_{n_2}(u_{n_2,k}(x))\right)(u_{n_1,k}(x)-u_{n_2,k}(x))^+d\nu(x)\\[12pt]
\displaystyle \hspace{10pt} -\int_{\Omega}\int_{\Omega}(\a_p(x,y,u_{n_1,k}(y)-u_{n_1,k}(x))-\a_p(x,y,u_{n_2,k}(y)-u_{n_2,k}(x)))\\[11pt]
\hspace{200pt} \displaystyle \times(u_{n_1,k}(x)-u_{n_2,k}(x))^+ dm_x(y)d\nu(x)\\ [14pt]
\displaystyle \hspace{10pt} +\int_{\Omega}\left(\frac{1}{n_1}|u_{n_1,k}(x)|^{p-2}u_{n_1,k}^+(x)  -\frac{1}{n_2}|u_{n_2,k}(x)|^{p-2}u_{n_2,k}^+(x) \right)(u_{n_1,k}(x)-u_{n_2,k}(x))^+d\nu(x)
\\ [12pt]
\displaystyle \hspace{10pt} -\frac{1}{k}\int_{\Omega}\left( |u_{n_1,k}(x)|^{p-2}u_{n_1,k}^-(x)-|u_{n_2,k}(x)|^{p-2}u_{n_2,k}^-(x)\right)(u_{n_1,k}(x)-u_{n_2,k}(x))^+d\nu(x)\\[14pt]
\displaystyle =\int_\Omega \left(\varphi_{n_1,k}(x)-\varphi_{n_2,k}(x)\right)(u_{n_1,k}(x)-u_{n_2,k}(x))^+d\nu(x)\le 0.
\end{array}$$
Since $(\gamma_+)_k$ and $(\beta_+)_k$ are maximal monotone, the first and third  summands on the left hand side are nonnegative, and the same is true for   the second and fourth  summands since $(\gamma_-)_{n_1}\ge (\gamma_-)_{n_2}$, $(\beta_-)_{n_1}\ge (\beta_-)_{n_2}$ and these are all maximal monotone. The fifth summand is also nonnegative as illustrated in Remark \ref{remmon}. Then, since the last two summands are obviously nonnegative, we get that, in fact,
$$\int_{\Omega}\left(\frac{1}{n_1}|u_{n_1,k}(x)|^{p-2}u_{n_1,k}^+(x)  -\frac{1}{n_2}|u_{n_2,k}(x)|^{p-2}u_{n_2,k}^+(x) \right)(u_{n_1,k}(x)-u_{n_2,k}(x))^+d\nu(x)=0$$
and
$$\frac{1}{k}\int_{\Omega}\left( |u_{n_1,k}(x)|^{p-2}u_{n_1,k}^-(x)-|u_{n_2,k}(x)|^{p-2}u_{n_2,k}^-(x)\right)(u_{n_1,k}(x)-u_{n_2,k}(x))^+d\nu(x)=0$$
which together imply that $$u_{n_1,k}(x)\le u_{n_2,k}(x)\quad\hbox{for $\nu$-a.e. $x\in\Omega$}.$$

Similarly, we obtain that, for a fixed $n$, $u_{n,k}$ is $\nu$-a.e. in $\Omega$ nonincreasing in $k$.

\subsubsection{An $L^{p}$-estimate for the solutions of the approximate problems}\label{lpestimate}
Multiplying \eqref{EE1nk} and \eqref{EE2nk} by $$u_{n,k}-\frac{1}{\nu(\Omega_1)}\int_{\Omega_1}u_{n,k}d\nu,$$
integrating with respect to $\nu$ over $\Omega_1$ and $\Omega_2$, respectively, adding both equations and using the integration by parts formula (Remark \ref{remmon}) we get
\begin{equation}\label{first233}
\begin{array}{l}
\displaystyle\int_{\Omega_1}\left((\gamma_+)_k(u_{n,k}(x))+(\gamma_-)_n(u_{n,k}(x))\right)\left(u_{n,k}(x)-\frac{1}{\nu(\Omega_1)}\int_{\Omega_1}u_{n,k}d\nu\right)d\nu(x) \\ [14pt]
\hspace{10pt} \displaystyle + \int_{\Omega_2}\left((\beta_+)_k(u_{n,k}(x))+(\beta_-)_n(u_{n,k}(x))\right)\left(u_{n,k}(x)-\frac{1}{\nu(\Omega_1)}\int_{\Omega_1}u_{n,k}d\nu\right)d\nu(x) \\ [14pt]
\hspace{10pt} \displaystyle+\frac12 \int_{\Omega}\int_{\Omega}\a_p(x,y,u_{n,k}(y)-u_{n,k}(x))(u_{n,k}(y)-u_{n,k}(x))dm_x(y)d\nu(x)\\ [14pt]
\hspace{10pt} \displaystyle + \int_\Omega \left(\frac{1}{n}|u_{n,k}(x)|^{p-2}u_{n,k}^+(x) -\frac{1}{k}|u_{n,k}(x)|^{p-2}u_{n,k}^-(x)\right)\left(u_{n,k}(x)-\frac{1}{\nu(\Omega_1)}\int_{\Omega_1}u_{n,k}d\nu\right)d\nu(x)\\ [14pt]
\displaystyle = \int_{\Omega}\varphi_{n,k}(x)\left(u_{n,k}(x)-\frac{1}{\nu(\Omega_1)}\int_{\Omega_1}u_{n,k}d\nu\right)d\nu(x).
\end{array}
\end{equation}
For the first summand on the left hand side of \eqref{first233} we have
$$\begin{array}{l}
\displaystyle\int_{\Omega_1}\left((\gamma_+)_k(u_{n,k})+(\gamma_-)_n(u_{n,k})\right)\left(u_{n,k}-\frac{1}{\nu(\Omega_1)}\int_{\Omega_1}u_{n,k}d\nu\right)d\nu \\ [14pt]
\displaystyle =\int_{\Omega_1}\left((\gamma_+)_k(u_{n,k})-(\gamma_+)_k\left(\frac{1}{\nu(\Omega_1)}\int_{\Omega_1}u_{n,k}\right)\right)\left(u_{n,k}-\frac{1}{\nu(\Omega_1)}\int_{\Omega_1}u_{n,k}d\nu\right) d\nu \\ [14pt]
\displaystyle \hspace{10pt} + \int_{\Omega_1}\left((\gamma_-)_n(u_{n,k})-(\gamma_-)_n\left(\frac{1}{\nu(\Omega_1)}\int_{\Omega_1}u_{n,k}\right)\right)\left(u_{n,k}-\frac{1}{\nu(\Omega_1)}\int_{\Omega_1}u_{n,k}d\nu\right) d\nu \ge 0,\end{array}$$
and for the second
$$\begin{array}{l}
\displaystyle\int_{\Omega_2}\left((\beta_+)_k(u_{n,k})+(\beta_-)_n(u_{n,k})\right)\left(u_{n,k}-\frac{1}{\nu(\Omega_1)}\int_{\Omega_1}u_{n,k}d\nu\right)d\nu \\ [14pt]
\displaystyle =\int_{\Omega_2}\left((\beta_+)_k(u_{n,k})-(\beta_+)_k\left(\frac{1}{\nu(\Omega_2)}\int_{\Omega_2}u_{n,k}\right)\right)\left(u_{n,k}-\frac{1}{\nu(\Omega_2)}\int_{\Omega_2}u_{n,k}d\nu\right) d\nu \\ [14pt]
\displaystyle \hspace{10pt} +\int_{\Omega_2}\left((\beta_-)_n(u_{n,k})-(\beta_-)_n\left(\frac{1}{\nu(\Omega_2)}\int_{\Omega_2}u_{n,k}\right)\right)\left(u_{n,k}-\frac{1}{\nu(\Omega_2)}\int_{\Omega_2}u_{n,k}d\nu\right) d\nu \\ [14pt]
\displaystyle \hspace{10pt} -\int_{\Omega_2}\left((\beta_+)_k(u_{n,k})+(\beta_-)_n(u_{n,k})\right)\left(\frac{1}{\nu(\Omega_1)}\int_{\Omega_1}u_{n,k}d\nu-\frac{1}{\nu(\Omega_2)}\int_{\Omega_2}u_{n,k}d\nu\right) d\nu \\ [14pt]
\displaystyle \ge -\int_{\Omega_2}\left((\beta_+)_k(u_{n,k})+(\beta_-)_n(u_{n,k})\right)\left(\frac{1}{\nu(\Omega_1)}\int_{\Omega_1}u_{n,k}d\nu-\frac{1}{\nu(\Omega_2)}\int_{\Omega_2}u_{n,k}d\nu\right) d\nu.
\end{array}$$
Since $  F_{n,k} (s):=\frac1n |s|^{p-2}s^+-\frac1k  |s|^{p-2}s^-$ is nondecreasing, for the fourth summand on the left hand side of \eqref{first233} we have that
$$\begin{array}{l}
\displaystyle  \int_\Omega \left(\frac{1}{n}|u_{n,k}(x)|^{p-2}u_{n,k}^+(x) -\frac{1}{k}|u_{n,k}(x)|^{p-2}u_{n,k}^-(x)\right)\left(u_{n,k}(x)-\frac{1}{\nu(\Omega_1)}\int_{\Omega_1}u_{n,k}d\nu\right)d\nu(x)\\[14pt]
\displaystyle  =\int_{\Omega_1} \left(F_{n,k} (u_{n,k}(x)) - F_{n,k} \left(\frac{1}{\nu(\Omega_1)}\int_{\Omega_1}u_{n,k}d\nu \right) \right) \left(u_{n,k}(x)-\frac{1}{\nu(\Omega_1)}\int_{\Omega_1}u_{n,k}d\nu\right)d\nu(x)\\[14pt]
\displaystyle  \hspace{10pt} + \int_{\Omega_2} \left(F_{n,k} (u_{n,k}(x)) - F_{n,k} \left(\frac{1}{\nu(\Omega_2)}\int_{\Omega_1}u_{n,k}d\nu \right) \right) \left(u_{n,k}(x)-\frac{1}{\nu(\Omega_2)}\int_{\Omega_2}u_{n,k}d\nu\right)d\nu(x)\\[14pt]
\displaystyle  \hspace{10pt} - \int_{\Omega_2} F_{n,k} (u_{n,k}(x)) \left(\frac{1}{\nu(\Omega_1)}\int_{\Omega_1}u_{n,k}d\nu-\frac{1}{\nu(\Omega_2)}\int_{\Omega_2}u_{n,k}d\nu\right)d\nu(x)\\[14pt]
\displaystyle  \ge-\int_{\Omega_2} F_{n,k} (u_{n,k}(x)) \left(\frac{1}{\nu(\Omega_1)}\int_{\Omega_1}u_{n,k}d\nu-\frac{1}{\nu(\Omega_2)}\int_{\Omega_2}u_{n,k}d\nu\right)d\nu(x).
\end{array}$$
Finally, recalling \eqref{llo2} for the third summand in \eqref{first233}, we get
$$\begin{array}{l}
\displaystyle   \frac{c_p}{2}\int_{\Omega}\int_{\Omega}|u_{n,k}(y)-u_{n,k}(x)|^p dm_x(y)d\nu(x) \\ [14pt]
\displaystyle \le \int_{\Omega}\varphi_{n,k} \left(u_{n,k}-\frac{1}{\nu(\Omega_1)}\int_{\Omega_1}u_{n,k}d\nu\right)d\nu \\ [14pt]
\displaystyle \hspace{10pt} +\int_{\Omega_2}\left((\beta_+)_k(u_{n,k})+(\beta_-)_n(u_{n,k})\right)\left(\frac{1}{\nu(\Omega_1)}\int_{\Omega_1}u_{n,k}d\nu-\frac{1}{\nu(\Omega_2)}\int_{\Omega_2}u_{r,n,k}d\nu\right) d\nu \\ [14pt]
\displaystyle  \hspace{10pt} + \int_{\Omega_2}\left(\frac{1}{n}|u_{n,k}(x)|^{p-2}u_{n,k}^+(x) -\frac{1}{k}|u_{n,k}(x)|^{p-2}u_{n,k}^-(x)\right)\\[12pt]
\displaystyle \hspace{80pt} \times\left(\frac{1}{\nu(\Omega_1)}\int_{\Omega_1}u_{n,k}d\nu-\frac{1}{\nu(\Omega_2)}\int_{\Omega_2}u_{n,k}d\nu\right) d\nu .
\end{array}$$

Now, by H\"older's inequality and the generalised Poincar\'e type inequality with   $l=\nu(\Omega_1)$  (let $\Lambda_1$ denote the constant appearing in the generalised Poincar\'{e} type inequality in Assumption \ref{assumption3}),
$$\begin{array}{rl}
\displaystyle\int_{\Omega}\varphi_{n,k}  \left(u_{n,k}-\frac{1}{\nu(\Omega_1)}\int_{\Omega_1}u_{n,k}d\nu\right)d\nu
&\displaystyle\le\Vert\varphi\Vert_{L^{p'}(\Omega,\nu)}\left\Vert u_{n,k}-\frac{1}{\nu(\Omega_1)}\int_{\Omega_1}u_{n,k}d\nu\right\Vert_{L^p(\Omega,\nu)} \\ [14pt]
 &\displaystyle\le\Lambda_1\Vert\varphi\Vert_{L^{p'}(\Omega,\nu)} \left(\int_{\Omega}\int_{\Omega}|u_{n,k}(y)-u_{n,k}(x)|^p dm_x(y)d\nu(x)\right)^{\frac1p},
\end{array}$$
and, by \eqref{cota1}, \eqref{cota2}   and the generalised Poincar\'e type inequality with $l=\nu(\Omega_1)$ and with $l=\nu(\Omega_2)$   (let $\Lambda_2$ denote the constant appearing in the Poincar\'{e} type inequality for the latter case), we obtain
$$\begin{array}{l}
\displaystyle\int_{\Omega_2}\left(\left((\beta_+)_k(u_{n,k})+(\beta_-)_n(u_{n,k})\right)+\frac{1}{n}|u_{n,k}(x)|^{p-2}u_{n,k}^+(x) -\frac{1}{k}|u_{n,k}(x)|^{p-2}u_{n,k}^-(x)\right) \\ [12pt]
\displaystyle  \hspace{200pt} \times \left(\frac{1}{\nu(\Omega_1)}\int_{\Omega_1}u_{n,k}d\nu-\frac{1}{\nu(\Omega_2)}\int_{\Omega_2}u_{n,k}d\nu\right) d\nu\\ [14pt]
\displaystyle  \le \Vert\varphi\Vert_{L^{1}(\Omega,\nu)} \left|\frac{1}{\nu(\Omega_1)}\int_{\Omega_1}u_{n,k}d\nu-\frac{1}{\nu(\Omega_2)}\int_{\Omega_2}u_{n,k}d\nu\right|
\\ [14pt]
\displaystyle  \le \Vert\varphi\Vert_{L^{1}(\Omega,\nu)}\frac{1}{\nu(\Omega)^{\frac1p} } \left(\left\Vert u_{n,k}-\frac{1}{\nu(\Omega_1)}\int_{\Omega_1}u_{n,k}d\nu\right\Vert_{L^p(\Omega,\nu)}+\left\Vert u_{n,k}-\frac{1}{\nu(\Omega_2)}\int_{\Omega_2}u_{n,k}d\nu\right\Vert_{L^p(\Omega,\nu)}\right)\\ [14pt]
\displaystyle  \le \Vert\varphi\Vert_{L^{1}(\Omega,\nu)} \frac{\Lambda_1+\Lambda_2}{\nu(\Omega)^{\frac1p} }\left(\int_{\Omega}\int_{\Omega}|u_{n,k}(y)-u_{n,k}(x)|^p dm_x(y)d\nu(x)\right)^{\frac1p}.
\end{array}$$
  Therefore, by \eqref{first233} and the subsequent equations,
\begin{equation}\label{Lpvariationbound}
\begin{array}{l}
\displaystyle   \frac{c_p}{2} \left(\int_{\Omega}\int_{\Omega}|u_{n,k}(y)-u_{n,k}(x)|^p dm_x(y)d\nu(x)\right)^{\frac{1}{p'}}
\displaystyle \le \Lambda_1\Vert\varphi\Vert_{L^{p'}(\Omega,\nu)}+ \frac{\Lambda_1+\Lambda_2}{\nu(\Omega)^{\frac1p} }\Vert\varphi\Vert_{L^{1}(\Omega,\nu)}.
\end{array}
\end{equation}

\subsubsection{Existence of solutions of $(GP_\varphi)$}\label{existencesection}

Observe that a solution $(u,v)$ of~$(GP_\varphi)$ satisfies
$$\int_{\Omega_1}vd\nu+\int_{\Omega_2}vd\nu=\int_\Omega\varphi,$$
therefore, since $v\in\gamma(u)$ in $\Omega_1 $
and $v\in\beta(u)$ in $\Omega_2$,   we   need  $\varphi$ to satisfy
$$
 \mathcal{R}_{\gamma,\beta}^-\le\int_\Omega\varphi d\nu\le\mathcal{R}_{\gamma,\beta}^+.
$$
  We  prove the existence of solutions when the inequalities in the previous equation are strict. This suffices for what we need in the next section. Recall that we are working under the Assumptions~\ref{assumption1} to~\ref{assumption3}.

\begin{theorem}\label{existenceeli01}
Given  $\varphi\in L^{p'}(\Omega,\nu)$ such that
$$
 \mathcal{R}_{\gamma,\beta}^-<\int_\Omega\varphi d\nu<\mathcal{R}_{\gamma,\beta}^+,
$$
Problem $(GP_\varphi)$ stated in~\eqref{02091131general}  has a  solution.
\end{theorem}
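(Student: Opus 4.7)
The strategy is a double passage to the limit in the approximate problems \eqref{EE1nk}--\eqref{EE2nk} of Subsection~\ref{existenciaurnk}. Fixing $k$, I would first send $n\to\infty$ to obtain a solution $u_k$ of a partially approximated problem, and then send $k\to\infty$ to produce a solution of $(GP_\varphi)$. The ingredients already assembled---the monotonicity of $(u_{n,k})$ in each index (Subsection~\ref{secmon}), the $L^p$-variation bound \eqref{Lpvariationbound}, and the $L^1$ bounds \eqref{cota1}--\eqref{cota2}---are tailor-made for this scheme.

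The decisive step is upgrading the variation bound \eqref{Lpvariationbound} to a uniform $L^p(\Omega,\nu)$ bound on $(u_{n,k})$. By the Poincar\'e-type inequality of Assumption~\ref{assumption3}, this reduces to a uniform bound on $\big|\int_Z u_{n,k}\,d\nu\big|$ for some set $Z$ of positive $\nu$-measure. I would establish this by contradiction, and this is the point where the strict inequality $\mathcal{R}_{\gamma,\beta}^- < \int_\Omega \varphi\,d\nu < \mathcal{R}_{\gamma,\beta}^+$ enters. Testing \eqref{EE1nk}--\eqref{EE2nk} with the constant function $1$---which kills the nonlocal divergence by Remark~\ref{remmon}---produces an identity balancing the four Yosida graphs, the two penalisation terms, and $\int_\Omega\varphi_{n,k}\,d\nu$. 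If the mean of $u_{n,k}$ blew up to $+\infty$ along a subsequence, the variation bound would force $u_{n,k}\to+\infty$ on a set of nearly full $\nu$-measure, and passing to the limit in the identity would yield $\int_\Omega\varphi\,d\nu\ge\mathcal{R}_{\gamma,\beta}^+$, a contradiction. The lower bound is ruled out symmetrically.

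With the uniform $L^p$ bound secured, I fix $k$ and let $n\to\infty$. By monotone convergence and Subsection~\ref{secmon}, $u_{n,k}\uparrow u_k$ pointwise and strongly in $L^p(\Omega,\nu)$. Lemma~\ref{convergenciaap} passes the nonlocal divergence to its limit; $\tfrac1n|u_{n,k}|^{p-2}u_{n,k}^+$ vanishes in $L^{p'}$ thanks to the $L^p$ bound; and the Lipschitz truncations $(\gamma_+)_k,(\beta_+)_k$ converge under $u_{n,k}$ by dominated convergence. The delicate point is to identify $(\gamma_-)_n(u_{n,k})$ as an element of $\gamma_-(u_k)$ in the limit: the combined monotonicity $u_{n,k}\uparrow u_k$ and $(\gamma_-)_n\downarrow\gamma_-^0$ gives the pointwise limit, while the $L^1$ bound \eqref{cota2} together with Fatou promotes it to a selection in $\gamma_-(u_k)$; the argument for $(\beta_-)_n(u_{n,k})$ is the same. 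Since $u_k$ is nonincreasing in $k$ by the symmetric statement in Subsection~\ref{secmon}, repeating the whole scheme for $k\to\infty$ delivers the sought pair $[u,v]$ of Definition~\ref{defsol01}.

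The principal obstacle is the step from the variation bound to a genuine $L^p$ bound: the nonlocal setting carries no Sobolev-type compact embedding, which is the main structural difference with the local proofs in \cite{AIMTq} and \cite{BCrS}. This is exactly why the strict inequality on $\int_\Omega\varphi\,d\nu$ is indispensable, and why every subsequent convergence has to be extracted from the monotonicity structure of the approximating sequence rather than from a weak-limit argument on the nonlocal gradient.
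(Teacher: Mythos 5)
Your overall scheme coincides with the paper's: approximate with \eqref{EE1nk}--\eqref{EE2nk}, exploit the monotonicity in $n$ and $k$ from Subsection~\ref{secmon} together with the variation estimate \eqref{Lpvariationbound}, feed a good set $Z$ into the generalised Poincar\'{e} inequality, and pass monotonically to the limit first in $n$ and then in $k$. The gap lies in what you call the decisive step. Your contradiction argument --- test with the constant $1$, let the mean run off to infinity, invoke Fatou --- is sound for the inner limit ($n\to\infty$ at fixed $k$): the penalisation $\tfrac{1}{k}\int_\Omega (u_{n,k}^-)^{p-1}\,d\nu$ is controlled by dominated convergence since $u_{n,k}^-\le u_{1,k}^-$, the negative Yosida parts $(\gamma_-)_n(u_{n,k})$, $(\beta_-)_n(u_{n,k})$ stay in the fixed interval $[\varGamma^-,0]$ resp.\ $[\mathfrak{B}^-,0]$ and tend to $0$, and the positive Yosida parts drive $\int_\Omega\varphi\,d\nu\ge\mathcal{R}_{\gamma,\beta}^+$. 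But it breaks down for the outer limit $k\to\infty$ exactly in the cases where $\mathcal{R}_{\gamma,\beta}^-$ or $\mathcal{R}_{\gamma,\beta}^+$ is finite.

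To see why, suppose $\mathcal{R}_{\gamma,\beta}^->-\infty$ and the $\Omega_1$-mean of $u_k$ escapes to $-\infty$. Then $u_k\to-\infty$ $\nu$-a.e., so $z_k^+=(\gamma_+)_k(u_k)\to 0$ and $z_k^-\in\gamma_-(u_k)\to\varGamma^-$ a.e., and likewise for $\omega_k^\pm$. Dominated convergence handles the negative selections ($z_k^-,\omega_k^-$ take values in $[\varGamma^-,0]$, $[\mathfrak{B}^-,0]$) and gives $\int_{\Omega_1}z_k^-+\int_{\Omega_2}\omega_k^-\to\mathcal{R}_{\gamma,\beta}^-$; but the positive Yosida parts $z_k^+,\omega_k^+$ are controlled only in $L^1(\Omega,\nu)$ by \eqref{cota1}. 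They converge to $0$ a.e.\ yet may concentrate on the shrinking set where $u_k$ does not blow down, so $\int z_k^++\int\omega_k^+$ need not vanish, and your passage to the limit in the integrated identity yields nothing incompatible with $\int_\Omega\varphi\,d\nu>\mathcal{R}_{\gamma,\beta}^-$. This is precisely the spot the paper treats with the B\'{e}nilan--Crandall machinery of Subsection~\ref{secacc}: testing with $\rho(u_k^+)$, $\rho\in P_0$, and applying \eqref{bCrProp22} produces the tail estimate \eqref{cons002}; in the subcases where $\sup D(\gamma)$ or $\sup D(\beta)$ is finite one additionally needs Lemma~\ref{aproxYosida} and the auxiliary test functions $\Psi_k$, $\widehat\Psi_k$ to obtain a weak $L^{p'}$ bound on $z_k^+$ and then equiintegrability via the Dunford--Pettis theorem. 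In parallel, the paper actually avoids the Fatou-type argument: it builds an explicit set $K_k$ of measure bounded from below on which $z_k$ (hence $u_k$) is controlled, and then invokes the generalised Poincar\'{e} inequality with $Z=K_k$. One way or another, some equiintegrability of the positive Yosida approximations is indispensable for the boundedness in $k$, and it is absent from your plan.
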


Observe then that any solution $(u,v)$ of $(GP_\varphi)$ under such assumptions also satisfies
$$
 \mathcal{R}_{\gamma,\beta}^-<\int_\Omega vd\nu<\mathcal{R}_{\gamma,\beta}^+.
$$
 This will be used later on.

We divide the proof into three cases.

\begin{proof}[Proof of Theorem~\ref{existenceeli01} when $\mathcal{R}_{\gamma,\beta}^\pm=\pm\infty$]
Suppose that
$$
\mathcal{R}_{\gamma,\beta}^-= -\infty \ \hbox{ and } \mathcal{R}_{\gamma,\beta}^+= +\infty.
$$
  Let $\varphi \in L^{p'}(\Omega,\nu)$, $\varphi_{n,k}$ defined as in \eqref{varphink} and let $u_{n,k}\in L^\infty(\Omega,\nu)$, $n, k\in\N$, be solutions of the {\it Approximate Problem} \eqref{EE1nk}--\eqref{EE2nk}.

\noindent {\it Step A (Boundedness)}.   Let us first see that $\{\Vert u_{n,k}\Vert_{L^p(\Omega,\nu)}\}_{n,k}$ is bounded.

\noindent {\it   Step  1.}  We start by proving that $\{\Vert u_{n,k}^+\Vert_{L^p(\Omega,\nu)}\}_{n,k}$ is bounded.  We  see this case by case.  Since $\mathcal{R}_{\gamma,\beta}^+=+\infty$, then $\sup \mbox{Ran}(\gamma)=+\infty$ or $\sup \mbox{Ran}(\beta)=+\infty$.

\noindent {\it Case 1.1.} Suppose that $\sup \mbox{Ran}(\gamma)=+\infty$. Then, by \eqref{cota1},
$$\int_{\Omega_1}(\gamma_+)_k(u_{n,k})d\nu \le M:=\int_\Omega \varphi d\nu \quad \hbox{for every } n, k\in\N.$$
Let $z^+_{n,k}:= (\gamma_+)_k(u_{n,k})$ and $\widetilde{\Omega}_{n,k}:=\left\{x\in \Omega_1 : z_{n,k}^+(x)<\frac{2M}{\nu(\Omega_1)}\right\}$. Then
$$0\le \int_{\widetilde{\Omega}_{n,k}}z_{n,k}^+d\nu =\int_{\Omega_1}z_{n,k}^+d\nu-\int_{\Omega_1\setminus \widetilde{\Omega}_{n,k}}z_{n,k}^+d\nu\le M-(\nu(\Omega_1)-\nu(\widetilde{\Omega}_{n,k}))\frac{2M}{\nu(\Omega_1)}=
\nu(\widetilde{\Omega}_{n,k})\frac{2M}{\nu(\Omega_1)}-M,$$
from where
$$\nu(\widetilde{\Omega}_{n,k})\ge \frac{\nu(\Omega_1)}{2}.$$

\noindent {\it Case 1.1.1.} Assume that $\sup D(\gamma)=+\infty$. Let
$r_0\in\R$ be such that $\gamma^0(r_0)>2M/\nu(\Omega_1)$ and let $k_0\in\N$ such that
\begin{equation}\label{r0lemma1segunda}\frac{2M}{\nu(\Omega_1)}<(\gamma_+)_k(r_0)\le
\gamma^0(r_0)\quad \hbox{for $k\ge k_0$}.\end{equation}
Then, since in $\widetilde{\Omega}_{n,k}$  we have that $ (\gamma_+)_k(u_{n,k})=z^+_{n,k}<\frac{2M}{\nu(\Omega_1)}$, from \eqref{r0lemma1segunda} we get that
$$  u_{n,k}^+ \le  r_0\quad\hbox{in $\widetilde{\Omega}_{n,k}$}\quad \hbox{for every $k\ge k_0$ and every $n\in\N$}.$$
Therefore, this bound, the generalised Poincar\'{e} type inequality with $l=\frac{\nu(\Omega_1)}{2}$  and \eqref{Lpvariationbound} yield the boundedness of $\{\Vert u_{n,k}^+\Vert_{L^p(\Omega,\nu)}\}_{n,k}$.

\noindent {\it Case 1.1.2.} Suppose that  $r_\gamma:=\sup D(\gamma)<+\infty$ and let $h>0$. Since $r_\gamma+h\not\in D(\gamma)$,
$$(\gamma_+)_k(r_\gamma+h)\uparrow +\infty \ \hbox{ as $k\to +\infty$}.$$
Take $k_0\in\N$ such that $(\gamma_+)_k(r_\gamma+h)\ge \frac{2M}{\nu(\Omega_1)}$ for every $k\ge k_0$. Then,
$$(\gamma_+)_k(u_{n,k}^+)<\frac{2M}{\nu(\Omega_1)}\le (\gamma_+)_k(r_\gamma+h) \quad\hbox{in $\widetilde{\Omega}_{n,k}$}\quad \hbox{for every $k\ge k_0$ and every $n\in\N$},$$
thus
$$u_{n,k}^+\le r_\gamma+h \quad\hbox{in $\widetilde{\Omega}_{n,k}$}\quad \hbox{for every $k\ge k_0$ and every $n\in\N$}.$$
Therefore, again, this bound together with the generalised Poincar\'{e} type inequality with $l=\frac{\nu(\Omega_1)}{2}$    and \eqref{cota1} yield the boundedness of $\{\Vert u_{n,k}^+\Vert_{L^p(\Omega,\nu)}\}_{n,k}$.

\noindent {\it Case 1.2.} If $ \sup \mbox{Ran}(\beta)=+\infty$ we proceed similarly.

\noindent {\it   Step 2.}   Using that $\mathcal{R}_{\gamma,\beta}^-=-\infty$ we obtain that $\{\Vert u_{n,k}^-\Vert_{L^p(\Omega,\nu)}\}_{n,k}$ is bounded with an analogous argument.

Consequently, we get that $\{\Vert u_{n,k}\Vert_{L^p(\Omega,\nu)}\}_{n,k}$ is  bounded as desired.

\noindent {\it Step B (Taking limits in $n$)}.  The monotonicity properties obtained in Subsection~\ref{secmon}  together with the boundedness of $\{\Vert u_{n,k}\Vert_{L^p(\Omega,\nu)}\}_{n,k}$ allow us to apply the monotone convergence theorem to obtain $u_k\in L^p(\Omega,\nu)$, $k\in \N$, and $u\in L^p(\Omega,\nu)$ such that, taking a subsequence if necessary, $u_{n,k}\stackrel{n}{\rightarrow} u_k$ in $L^p(\Omega,\nu)$ and pointwise $\nu$-a.e. in $\Omega$ for $k\in\N$, and $u_{k}\stackrel{k}{\rightarrow} u$ in $L^p(\Omega,\nu)$ and pointwise $\nu$-a.e. in $\Omega$.

We now want to take limits, in $n$ and then in $k$, in \eqref{EE1nk} and \eqref{EE2nk}. Since $u_{n,k}\stackrel{n}{\rightarrow} u_k$ in $L^p(\Omega,\nu)$ and pointwise $\nu$-a.e. in $\Omega$,
\begin{equation}\label{apconvnk}
\int_{\Omega}\a_p(\cdot,y,u_{n,k}(y)-u_{n,k}(\cdot))dm_{(\cdot)}(y)\stackrel{n}{\longrightarrow}\int_{\Omega}\a_p(\cdot,y,u_{k}(y)-u_{k}(\cdot))dm_{(\cdot)}(y),
\end{equation}
$$\frac{1}{n}|u_{n,k}|^{p-2}u_{n,k}^+\stackrel{n}{\longrightarrow} 0$$
and
$$\frac{1}{k}|u_{n,k}|^{p-2}u_{n,k}^-\stackrel{n}{\longrightarrow} \frac{1}{k}|u_{k}|^{p-2}u_{k}^-$$
in $L^{p'}(\Omega,\nu)$ and,   up to a subsequence, for $\nu$-a.e. $x\in\Omega$.
Indeed, the second and third limits follow because $|u_{n,k}|^{p-2}u_{n,k}^+\stackrel{n}{\rightarrow}|u_{k}|^{p-2}u_{k}^+$ in $L^{p'}(\Omega,\nu)$. Now, since $\{u_{n,k}\}_n$ is nonincreasing in $n$,  $|u_{n,k}|\le \max\{|u_{1,k}|,|u_k|\}$ $\nu$-a.e. in $\Omega$, for every $n$, $k\in\N$, so Lemma \ref{convergenciaap} yields the convergence \eqref{apconvnk} in $L^{p'}(\Omega,\nu)$.

Now, isolating $(\gamma_+)_k(u_{n,k})+(\gamma_-)_n(u_{n,k})$ and $(\beta_+)_k(u_{n,k})+(\beta_-)_n(u_{n,k})$ in equations~\eqref{EE1nk} and~\eqref{EE2nk}, respectively, and taking the positive parts, we get that
  $$ \begin{array}{l}
  \displaystyle(\gamma_+)_k(u_{n,k}(x)) \\ [6pt]
 \displaystyle
=\left(\int_{\Omega}\a_p(x,y,u_{n,k}(y)-u_{n,k}(x))dm_x(y)
+\frac{1}{n}|u_{n,k}(x)|^{p-2}u_{n,k}^+(x) -\frac{1}{k}|u_{n,k}(x)|^{p-2}u_{n,k}^-(x)
+\varphi_{n,k}(x)\right)^+   \end{array}
$$
for $x\in\Omega_1$, and
$$ \begin{array}{l}
  \displaystyle(\beta_+)_k(u_{n,k}(x)) \\ [6pt]
 \displaystyle  = \left(\int_{\Omega}\a_p(x,y,u_{n,k}(y)-u_{n,k}(x))dm_x(y)+
\frac{1}{n}|u_{n,k}(x)|^{p-2}u_{n,k}^+(x)  -\frac{1}{k}|u_{n,k}(x)|^{p-2}u_{n,k}^-(x)+\varphi_{n,k}(x)\right)^+  \end{array}
$$
for $x\in\Omega_2$. Therefore, since the right hand sides of these equations converge in $L^{p'}(\Omega_1,\nu)$ and $L^{p'}(\Omega_2,\nu)$ (and also $\nu$-a.e. in $\Omega_1$ and $\Omega_2$), respectively,  there exist $z^+_k\in L^{p'}(\Omega_1,\nu)$ and $\omega^+_k\in L^{p'}(\Omega_2,\nu)$ such that  $(\gamma_+)_k(u_{n,k})\stackrel{n}{\rightarrow}z^+_k$ in $L^{p'}(\Omega_1,\nu)$ and pointwise $\nu$-a.e. in $\Omega_1$, and $(\beta_+)_k(u_{n,k})\stackrel{n}{\rightarrow}\omega^+_k$ in $L^{p'}(\Omega_2,\nu)$ and pointwise $\nu$-a.e. in $\Omega_2$. Moreover, since  $(\gamma_+)_k$ and $(\beta_+)_k$ are maximal monotone graphs, $z^+_k= (\gamma_+)_k(u_{k})$ $\nu$-a.e. in $\Omega_1$, and $\omega^+_k= (\beta_+)_k(u_{k})$ $\nu$-a.e. in $\Omega_2$.

  Similarly, taking the negative parts, there exist
$$  \lim_{n\to +\infty}(\gamma_-)_n(u_{n,k}(x))=z_k^-(x)\ \hbox{
in $L^{p'}(\Omega_1,\nu)$ and for $\nu$-a.e. $x\in\Omega_1$,}
$$
 and
$$ \lim_{n\to +\infty}(\beta_-)_n(u_{n,k}(x))=\omega_k^-(x)
\ \hbox{
in $L^{p'}(\Omega_2,\nu)$ and for $\nu$-a.e. $x\in\Omega_2$}.
$$
Moreover, by \cite[Lemma G]{BCrS}, $z^-_k\in \gamma_-(u_{k})$ and $\omega^-_k\in \beta_-(u_{k})$.
Therefore, we have obtained that
\begin{equation}\label{EE1onlyn} \begin{array}{l}\displaystyle z_k^+(x)+z^-_k(x)-\int_{\Omega}\a_p(x,y,u_{k}(y)-u_{k}(x))dm_x(y) -\frac{1}{k}|u_{k}(x)|^{p-2}u_{k}^-(x) = \varphi_{k}(x), \end{array}\end{equation}
for $\nu$-a.e. $x\in\Omega_1$, and
\begin{equation}\label{EE2onlyn} \begin{array}{l}\displaystyle \omega_k^+(x)+\omega_k^-(x)-\int_{\Omega}\a_p(x,y,u_{k}(y)-u_{k}(x))dm_x(y) -\frac{1}{k}|u_{k}(x)|^{p-2} u_{k}^-(x) =\varphi_{k}(x)\end{array} \end{equation}
for $\nu$-a.e. $x\in\Omega_2$.

\noindent {\it Step C (Taking limits in $k$)}.   Now again, isolating $z_k^++z^-_k$ and $\omega_k^++\omega_k^-$ in equations \eqref{EE1onlyn} and \eqref{EE2onlyn}, respectively, and taking the positive and negative parts as above, we get that there exist $z^+\in L^{p'}(\Omega_1,\nu)$, $z^-\in L^{p'}(\Omega_1,\nu)$, $\omega^+\in L^{p'}(\Omega_2,\nu)$ and $\omega^-\in L^{p'}(\Omega_2,\nu)$ such that $z^+_k \stackrel{k}{\rightarrow}z^+$ and $z^-_k \stackrel{k}{\rightarrow}z^-$ in $L^{p'}(\Omega_1,\nu)$ and pointwise $\nu$-a.e. in $\Omega_1$, and $\omega^+_k \stackrel{k}{\rightarrow}\omega^+$ and $\omega^-_k \stackrel{k}{\rightarrow}\omega^-$ in $L^{p'}(\Omega_2,\nu)$ and pointwise $\nu$-a.e. in $\Omega_2$. In addition,  by  the maximal monotonicity of $\gamma_-$ and $\beta_-$, $z^-\in \gamma_-(u)$ and $\omega^-\in \beta_-(u)$ $\nu$-a.e. in $\Omega_1$ and $\Omega_2$, respectively. Moreover,  by \cite[Lemma G]{BCrS}, $z^+\in \gamma_+(u)$ and $\omega^+\in \beta_+(u)$ $\nu$-a.e. in $\Omega_1$ and $\Omega_2$, respectively.

Consequently,
$$ z(x)-\int_{\Omega}\a_p(x,y,u(y)-u(x))dm_x(y)=\varphi(x)
\ \hbox{
for $\nu$-a.e. $x\in\Omega_1$,}$$ and
$$ \omega(x)-\int_{\Omega}\a_p(x,y,u(y)-u(x))dm_x(y)=\varphi(x)
\ \hbox{
for $\nu$-a.e. $x\in\Omega_2$,}$$ where $z=z^++z^-  \in \gamma(u)$ $\nu$-a.e. in $\Omega_1$ and $\omega=\omega^++\omega^-\in\beta(u)$ $\nu$-a.e. in $\Omega_2$. The proof of existence in this case is done. \qed
\end{proof}

\begin{proof}[Proof of Theorem~\ref{existenceeli01} when $\mathcal{R}_{\gamma,\beta}^\pm$ are finite]
Suppose that
$$
-\infty<\mathcal{R}_{\gamma,\beta}^-<\mathcal{R}_{\gamma,\beta}^+< +\infty .
$$
Let $\varphi\in L^{p'}(\Omega,\nu)$, and assume that it satisfies $$\mathcal{R}_{\gamma,\beta}^-<\int_{\Omega}\varphi d\nu < \mathcal{R}_{\gamma,\beta}^+.$$
 Then, for $\varphi_{n,k}$ defined as in \eqref{varphink}, there exist $M_1, M_2\in\R$ and $n_0, k_0\in\N$ such that
\begin{equation}\label{varphinkM1M2}
\mathcal{R}_{\gamma,\beta}^-<M_2<\int_{\Omega}\varphi_{n,k}d\nu < M_1<\mathcal{R}_{\gamma,\beta}^+
\end{equation}
for every $n\ge n_0$ and $k\ge k_0$.  For $n, k\in\N$ let $u_{n,k}\in L^\infty(\Omega,\nu)$ be the solution of the {\it Approximate Problem} \eqref{EE1nk}--\eqref{EE2nk},  and let
\begin{equation}\label{M3}
M_3:=\sup_{n,k\in\N} \left\Vert u_{n,k} - \frac{1}{\nu(\Omega_1)}\int_{\Omega_1}u_{n,k}d\nu \right\Vert_{L^p(\Omega,\nu)}<+\infty.
\end{equation}
Observe that $M_3$ is finite by the generalised Poincar\'e type inequality together with \eqref{Lpvariationbound}. Let $k_1\in\N$ such that $k_1\ge k_0$ and $M_1+\frac{1}{k}M_3\nu(\Omega)^{\frac{1}{p(p-1)}}<\mathcal{R}_{\gamma,\beta}^+$ for every $k\ge k_1$.

\noindent   {\it Step D (Boundedness of $\{\Vert u_{n,k}\Vert_{L^p(\Omega,\nu)}\}_n$  and passing to the limit in $n$)} Let us see that, for each $k\in \mathbb{N}$, $\{\Vert u_{n,k}\Vert_{L^p(\Omega,\nu)}\}_n$ is bounded.  Fix $k\ge k_1$ and suppose that $\{\Vert u_{n,k}\Vert_{L^p(\Omega,\nu)}\}_{n}$ is not bounded. Then, by \eqref{M3}, since $u_{n,k}$ is nondecreasing in $n$,
$$\frac{1}{\nu(\Omega_1)}\int_{\Omega_1}u_{n,k}d\nu \stackrel{n\to +\infty}{\longrightarrow} +\infty .$$
Thus, using again that $u_{n,k}$ is nondecreasing in $n$, there exists $n_1\ge n_0$ such that
$$\begin{array}{rl}
\displaystyle u_{n,k}^-&\displaystyle\le \left(u_{n,k}-\frac{1}{\nu(\Omega_1)}\int_{\Omega_1}u_{n,k}d\nu\right)^-+\left( \frac{1}{\nu(\Omega_1)}\int_{\Omega_1}u_{n,k}d\nu \right)^-\\ [14pt]
&\displaystyle=\left(u_{n,k}-\frac{1}{\nu(\Omega_1)}\int_{\Omega_1}u_{n,k}d\nu\right)^-
\end{array}$$
for every $n\ge n_1$, and thus
$$
\Vert u_{n,k}^-\Vert_{L^p(\Omega,\nu)}\le M_3 \ \hbox{ for every $n\ge n_1$}.
$$
Consequently, $\Vert u_{n,k}^-\Vert_{L^{p-1}(\Omega,\nu)}\le M_3\nu(\Omega)^{\frac{1}{p(p-1)}}$ for $n\ge n_1$. Then, with this bound and \eqref{varphinkM1M2} at hand, integrating \eqref{EE1nk} and \eqref{EE2nk} with respect to $\nu$ over $\Omega_1$ and $\Omega_2$, respectively, adding both equations and neglecting some nonnegative terms we get
$$
\begin{array}{l}
\displaystyle \int_{\Omega_1}\underbrace{(\gamma_+)_k(u_{n,k}(x))+(\gamma_-)_n(u_{n,k}(x))}_{z_{n,k}(x)}d\nu(x)+\int_{\Omega_2}\underbrace{(\beta_+)_k(u_{n,k}(x))+(\beta_-)_n(u_{n,k}(x))}_{\omega_{n,k}(x)}d\nu(x)  \\ [14pt]
\displaystyle \ \le \underbrace{M_1 +\frac{1}{k}M_3\nu(\Omega)^{\frac{1}{p(p-1)}}}_{M_4}< \mathcal{R}_{\gamma,\beta}^+ .
\end{array}
$$
Therefore, for each $n\in\N$, either
\begin{equation}\label{option1rep}
\int_{\Omega_1}z_{n,k}d\nu<\nu(\Omega_1)\sup \mbox{Ran}(\gamma)-\frac{\delta}{2}
\end{equation}
or
\begin{equation}\label{option2rep}
\int_{\Omega_2}\omega_{n,k} d\nu<\nu(\Omega_2)\sup \mbox{Ran}(\beta)-\frac{\delta}{2},
\end{equation}
where $\delta:=\mathcal{R}_{\gamma,\beta}^+- M_4>0$.

For $n\in\N$ such that \eqref{option1rep} holds let $K_{n,k}:=\left\{x\in\Omega_1 \, : \, z_{n,k}(x)<\sup \mbox{Ran}(\gamma)-\frac{\delta}{4\nu(\Omega_1)}\right\}$. Then
$$\int_{K_{n,k}}z_{n,k}d\nu=\int_{\Omega_1}z_{n,k}d\nu-\int_{\Omega_1\setminus K_{n,k}}z_{n,k}d\nu<-\frac{\delta}{4}+\nu(K_{n,k})\left(\sup \mbox{Ran}(\gamma)-\frac{\delta}{4\nu(\Omega_1)}\right),$$
and
$$\int_{K_{n,k}}z_{n,k}d\nu\ge \nu(K_{n,k})\inf \mbox{Ran}(\gamma).$$
Therefore,
$$\nu(K_{n,k})\left(\sup \mbox{Ran}(\gamma)-\inf \mbox{Ran}(\gamma)-\frac{\delta}{4\nu(\Omega_1)}\right)\ge \frac{\delta}{4},$$
thus $\nu(K_{n,k})>0$, $\displaystyle \sup \mbox{Ran}(\gamma)-\inf \mbox{Ran}(\gamma)-\frac{\delta}{4\nu(\Omega_1)}>0$ and
$$\nu(K_{n,k})\ge \frac{\delta/4}{\sup \mbox{Ran}(\gamma)-\inf \mbox{Ran}(\gamma)-\frac{\delta}{4\nu(\Omega_1)}}.$$
 Note that, if $\sup \mbox{Ran}(\gamma)-\frac{\delta}{4\nu(\Omega_1)}\le 0$ then $z_{n,k}\le 0$ in $K_{n,k}$, thus $u_{n,k}^+=0$ in $K_{n,k}$ and, consequently, $\Vert u_{n,k}^+\Vert_{L^p(K_{n,k},\nu)}=0$. Therefore, by the generalised Poincar\'{e} type inequality and \eqref{Lpvariationbound} we get that $\{\Vert u_{n,k}\Vert_{L^p(\Omega,\nu)}\}_{n}$ is bounded, which is a contradiction. We may therefore suppose that $\sup \mbox{Ran}(\gamma) -\frac{\delta}{4\nu(\Omega_1)} > 0$. Then, for $k_2\ge k_1$ large enough so that $\sup \mbox{Ran}((\gamma_+)_{k})>\sup \mbox{Ran}(\gamma)-\frac{\delta}{4\nu(\Omega_1)}$ for $k\ge k_2$,
$$\Vert u_{n,k}^+\Vert_{L^p(K_{n,k},\nu)}\le \nu(K_{n,k})^{\frac1p}(\gamma_+)_{k}^{-1}\left(\sup \mbox{Ran}(\gamma)-\frac{\delta}{4\nu(\Omega_1)} \right)$$
and by the generalised Poincar\'{e} type inequality and \eqref{Lpvariationbound} we get that $\{\Vert u_{n,k}\Vert_{L^p(\Omega,\nu)}\}_{n}$ is bounded, which is a contradiction.
Similarly for $n\in\N$ such that \eqref{option2rep} holds.

We have obtained that $\{\Vert u_{n,k}\Vert_{L^p(\Omega,\nu)}\}_{n}$ is bounded for each $k\in\N$. Therefore, since $\{u_{n,k}\}_n$ is nondecreasing in $n$, we may apply the monotone convergence theorem to obtain $u_k\in L^p(\Omega,\nu)$, $k\in \N$, such that $u_{n,k}\stackrel{n}{\rightarrow} u_k$ in $L^p(\Omega,\nu)$ and pointwise $\nu$-a.e. in $\Omega$ for $k\in\N$.
  Proceeding now  like in {\it Step B} of the previous proof we get:
  $z_k^+\in L^{p'}(\Omega_1,\nu)$ and $\omega_k^+\in L^{p'}(\Omega_2,\nu)$  such that $z^+_k\in \gamma_+(u_{k})$ and $\omega^+_k\in \beta_+(u_{k})$ $\nu$-a.e. in $\Omega_1$ and $\Omega_2$, respectively; and  $z_k^-\in L^{p'}(\Omega_1,\nu)$ and $\omega_k^-\in L^{p'}(\Omega_2,\nu)$ with $z^-_k\in \gamma_-(u_{k})$ and $\omega^-_k\in \beta_-(u_{k})$, $\nu$-a.e. $\Omega_1$ and $\Omega_2$, respectively, and such that
\begin{equation}\label{EE1onlyk} \begin{array}{l}\displaystyle z_k^+(x)+z^-_k(x)-\int_{\Omega}\a_p(x,y,u_{k}(y)-u_{k}(x))dm_x(y) -\frac{1}{k}|u_{k}(x)|^{p-2}u_{k}^-(x) = \varphi_{k}(x), \end{array}\end{equation}
for $\nu$-a.e. every $x\in\Omega_1$, and
\begin{equation}\label{EE2onlyk} \begin{array}{l}\displaystyle \omega_k^+(x)+\omega_k^-(x)-\int_{\Omega}\a_p(x,y,u_{k}(y)-u_{k}(x))dm_x(y) -\frac{1}{k}|u_{k}(x)|^{p-2}u_{k}^-(x) =\varphi_{k}(x)\end{array} \end{equation}
for $\nu$-a.e. every $x\in\Omega_2$.

\noindent   {\it Step E} (Boundedness of $\{\Vert u_{k}\Vert_{L^p(\Omega,\nu)}\}_k$ and passing to the limit in $k$) We now see that $\{\Vert u_{k}\Vert_{L^p(\Omega,\nu)}\}_{k}$ is bounded.  Since $u_k^+\le u_1^+$, it is enough to see that $\{\Vert u_{k}^-\Vert_{L^p(\Omega,\nu)}\}_{k}$ is bounded.

Now, \eqref{EE1onlyk} and \eqref{EE2onlyk} yield
$$
\begin{array}{l}
\displaystyle \int_{\Omega_1}\underbrace{z_k^+(x)+z^-_k(x)}_{z_{k}(x)}d\nu(x)+\int_{\Omega_2}\underbrace{\omega_k^+(x)+\omega_k^-(x)}_{\omega_{k}(x)}d\nu(x)  \ge M_2 > \mathcal{R}_{\gamma,\beta}^- .
\end{array}
$$
Therefore, for each $k\in\N$, either
\begin{equation}\label{option11}
\int_{\Omega_1}z_{k}d\nu>\nu(\Omega_1)\inf \mbox{Ran}(\gamma)+\frac{\delta'}{2}
\end{equation}
or
\begin{equation}\label{option22}
\int_{\Omega_2}\omega_{k} d\nu>\nu(\Omega_2)\inf \mbox{Ran}(\beta)+\frac{\delta'}{2},
\end{equation}
where $\delta':=M_2-\mathcal{R}_{\gamma,\beta}^->0$.

For $k\in\N$ such that \eqref{option11} holds let $K_{k}:=\{x\in\Omega_1 \, : \, z_{k}(x)>\inf \mbox{Ran}(\gamma) +\frac{\delta'}{4\nu(\Omega_1)}\}$. Then
 $$\begin{array}{rl}
\displaystyle\int_{K_{k}}z_{k}d\nu&\displaystyle=\int_{\Omega_1}z_{k}d\nu-\int_{\Omega_1\setminus K_{k}}z_{k}d\nu\\[14pt]
&\displaystyle > \left(\nu(\Omega_1)\inf \mbox{Ran}(\gamma) +\frac{\delta'}{2}\right)-(\nu(\Omega_1)-\nu( K_k))\left(\inf \mbox{Ran}(\gamma) +\frac{\delta'}{4\nu(\Omega_1)}\right)\\[14pt]
&\displaystyle=\frac{\delta'}{4}+\nu(K_{k})\left(\inf \mbox{Ran}(\gamma) +\frac{\delta'}{4\nu(\Omega_1)}\right),
\end{array}$$
and
$$\int_{K_{k}}z_{k}d\nu\le \nu(K_{k})\sup \mbox{Ran}(\gamma).$$
Therefore,
$$\nu(K_{k})\left(\sup \mbox{Ran}(\gamma) -\inf \mbox{Ran}(\gamma) -\frac{\delta'}{4\nu(\Omega_1)}\right)\ge \frac{\delta'}{4},$$
thus $\nu(K_{k})>0$, $\displaystyle \sup \mbox{Ran}(\gamma) -\inf \mbox{Ran}(\gamma) -\frac{\delta'}{4\nu(\Omega_1)}>0$ and
$$\nu(K_{k})\ge \frac{\delta'/4}{\sup \mbox{Ran}(\gamma) -\inf \mbox{Ran}(\gamma) -\frac{\delta'}{4\nu(\Omega_1)}}.$$
   Now, if $\inf \mbox{Ran}(\gamma) +\frac{\delta'}{4\nu(\Omega_1)}\ge 0$ then $z_{k}\ge 0$ in $K_{k}$, thus $u_{n,k}^-=0$ in $K_{k}$ and $\Vert u_{k}^-\Vert_{L^p(K_{k},\nu)}=0$; so by the generalised Poincar\'{e} type inequality and \eqref{Lpvariationbound} we get that $\{\Vert u_{k}\Vert_{L^p(\Omega,\nu)}\}_{n}$ is bounded.  If  $\inf \mbox{Ran}(\gamma)+\frac{\delta'}{4\nu(\Omega_1)} < 0$,
then
$$\Vert u_{k}^-\Vert_{L^p(K_{n,k},\nu)}\le- \nu(K_{k})^{\frac1p}\gamma_-^{-1}\left(\inf \mbox{Ran}(\gamma) +\frac{\delta'}{4\nu(\Omega_1)} \right)$$
  and by the generalised Poincar\'{e}   inequality and \eqref{Lpvariationbound} we get that $\{\Vert u_{k}\Vert_{L^p(\Omega,\nu)}\}_{k}$ is bounded.
Similarly for $k\in\N$ such that \eqref{option22} holds.

 Now, proceeding as in {\it Step C} of the previous proof, we finish this proof. \qed
\end{proof}

  Finally, we give the proof of the remaining case.

\begin{proof}[Proof of Theorem~\ref{existenceeli01} in the mixed case]
Let us see the existence for
\begin{equation}\label{varbetacond3}
-\infty<\mathcal{R}_{\gamma,\beta}^-<\mathcal{R}_{\gamma,\beta}^+= +\infty,
\end{equation}
or
\begin{equation}\label{varbetacond3r}
-\infty=\mathcal{R}_{\gamma,\beta}^-<\mathcal{R}_{\gamma,\beta}^+< +\infty.
\end{equation}
Suppose that~\eqref{varbetacond3} holds and let $\varphi\in L^{p'}(\Omega,\nu)$ satisfying $$\mathcal{R}_{\gamma,\beta}^-<\int_{\Omega}\varphi d\nu .$$
If~\eqref{varbetacond3r} holds and we have $\varphi\in L^{p'}(\Omega,\nu)$ satisfying $\displaystyle \int_{\Omega}\varphi d\nu< \mathcal{R}_{\gamma,\beta}^+$, the argument is analogous.

Let $\varphi_{n,k}$ be defined as in \eqref{varphink} and let $u_{n,k}\in L^\infty(\Omega,\nu)$, $n, k\in\N$, be the solution of the {\it Approximate Problem} \eqref{EE1nk}--\eqref{EE2nk}.
Then, by Lemma \ref{LemaAcotLp} together with \eqref{cota1},  $\{\Vert u_{n,k}^+\Vert_{L^p(\Omega,\nu)}\}_{n,k}$ is bounded. However, for a fixed $k\in\N$, since $u_{n,k}$ is nondecreasing in $n$,  $\{\Vert u_{n,k}^-\Vert_{L^p(\Omega,\nu)}\}_{n}$ is also bounded. Therefore, proceeding as in {\it Step B} of the first case, we obtain $u_k\in L^p(\Omega,\nu)$,   $z_k^+$, $z_k^-\in L^{p'}(\Omega_1,\nu)$ and $\omega_k^+$, $\omega_k^-\in L^{p'}(\Omega_2,\nu)$, $k\in\N$, such that
\begin{equation}\label{EE1onlykvar} \begin{array}{l}\displaystyle z_k^+(x)+z^-_k(x)-\int_{\Omega}\a_p(x,y,u_{k}(y)-u_{k}(x))dm_x(y) -\frac{1}{k}|u_{k}(x)|^{p-2}u_{k}^-(x) = \varphi_{k}(x), \end{array}\end{equation}
for $\nu$-a.e. $x\in\Omega_1$, and
\begin{equation}\label{EE2onlykvar} \begin{array}{l}\displaystyle \omega_k^+(x)+\omega_k^-(x)-\int_{\Omega}\a_p(x,y,u_{k}(y)-u_{k}(x))dm_x(y) -\frac{1}{k}|u_{k}(x)|^{p-2}u_{k}^-(x) =\varphi_{k}(x)\end{array} \end{equation}
for $\nu$-a.e. $x\in\Omega_2$; where, for $k\in\N$,
$$z^+_k= (\gamma_+)_k(u_{k}),\ z^-_k\in \gamma_-(u_{k}) \ \hbox{ $\nu$-a.e. in $\Omega_1$},$$
and
$$\omega^+_k= (\beta_+)_k(u_{k}),\ \omega^-_k\in \beta_-(u_{k}) \ \hbox{ $\nu$-a.e. in $\Omega_2$}.$$

  We  now prove that $\{\Vert u_{k}\Vert_{L^p(\Omega,\nu)}\}_{k}$ is bounded.
Proceeding as in {\it Step E} of the previous proof   and using the same notation, we get that for each $k\in\N$, either
\begin{equation}\label{option11var}
\int_{\Omega_1}z_{k}d\nu>\nu(\Omega_1)\inf \mbox{Ran}(\gamma) +\frac{\delta'}{2}
\end{equation}
or
\begin{equation}\label{option22var}
\int_{\Omega_2}\omega_{k} d\nu>\nu(\Omega_2)\inf \mbox{Ran}(\beta) +\frac{\delta'}{2}.
\end{equation}

\noindent {\it Case 1.}  For $k\in\N$ such that \eqref{option11var} holds, let  $$K_{k}:=\left\{x\in\Omega_1 \, : \, z_{k}(x)>\inf \mbox{Ran}(\gamma) +\frac{\delta'}{4\nu(\Omega_1)}\right\}.$$ Then,
\begin{equation}\label{zklowerbound}
\int_{K_{k}}z_{k}d\nu=\int_{\Omega_1}z_{k}d\nu-\int_{\Omega_1\setminus K_{k}}z_{k}d\nu>\frac{\delta'}{4}+\nu(K_{k})\left(\inf \mbox{Ran}(\gamma) +\frac{\delta'}{4\nu(\Omega_1)}\right).
\end{equation}

Now, by~\eqref{EE1onlykvar},
$$\begin{array}{c}\displaystyle
  \int_{\{x\in\Omega_1 \, : \,z_k(x)>h\}} z_k d\nu\le  \int_{\{x\in\Omega_1 \, : \,z_k(x)>h\}}\frac{|z_k|^{p'}}{h^{p'-1}}d\nu\\ \\
\displaystyle  =  \frac{1}{h^{p'-1}}\int_{\{x\in\Omega_1 \, : \,z_k(x)>h\}}\left|\int_{\Omega}a_p(x,y,u_k(y)-u_k(x)dm_x(y)+\varphi_k(x)\right|^{p'}d\nu(x).
\end{array}
  $$
  Thus, for a constant $D_1$ independent of $k$ and $h$,
  $$\begin{array}{c}\displaystyle
  \int_{\{x\in\Omega_1 \, : \,z_k(x)>h\}} z_k d\nu\le      \frac{D_1}{h^{p'-1}}\left(\int_{\Omega_1}\int_{\Omega}|a_p(x,y,u_k(y)-u_k(x)|^{p'}dm_x(y)d\nu(x)+\int_{\Omega_1}|\varphi_k|^{p'}d\nu\right).
\end{array}
  $$
Hence, by~\eqref{llo1} and~\eqref{Lpvariationbound}, there exist constants $D_2$ and $D_3$, independent of $k$ and $h$, such that
$$\begin{array}{c}\displaystyle
  \int_{\{x\in\Omega_1 \, : \,z_k(x)>h\}} z_k d\nu\le      \frac{D_2}{h^{p'-1}}\left(\int_{\Omega_1}\int_{\Omega}|u_k(y)-u_k(x)|^{p}dm_x(y)d\nu(x)+\int_{\Omega_1}|\varphi_k|^{p'}d\nu+1\right)\le      \frac{D_3}{h^{p'-1}}.
\end{array}
  $$
Consequently, we may find $h>0$ such that
$$\sup_{k\in \N}\int_{\{x\in\Omega_1 \, : \,z_k(x)>h\}} z_k d\nu< \frac{\delta'}{8}. $$
Therefore,
\begin{equation}\label{correccion002}
\int_{K_{k}}z_{k}d\nu=\int_{K_{k}\cap\{z_k>h\}}z_{k}d\nu
+ \int_{K_{k}\cap\{z_k\le h\}}z_{k}d\nu \le \frac{\delta'}{8}+\nu(K_k)h.
\end{equation}
Recalling \eqref{zklowerbound} we get
$$\frac{\delta'}{4}+\nu(K_{k})\left(\inf \mbox{Ran}(\gamma) +\frac{\delta'}{4\nu(\Omega_1)}\right)< \frac{\delta'}{8}+\nu(K_k)h,$$
thus
$$\frac{\delta'}{8}< \nu(K_{k})\left(h-\inf \mbox{Ran}(\gamma) -\frac{\delta'}{4\nu(\Omega_1)}\right). $$
Consequently, $\displaystyle h-\inf \mbox{Ran}(\gamma) -\frac{\delta'}{4\nu(\Omega_1)}>0$ and
$$\nu(K_{k})\ge \frac{\delta'/4}{h-\inf \mbox{Ran}(\gamma) -\frac{\delta'}{4\nu(\Omega_1)}}>0.
$$
From here we conclude as in the  previous proof.

\noindent {\it Case 2.}   For $k\in\N$ such that \eqref{option22var} holds, let $$\tilde K_{k}:=\{x\in\Omega_2 \, : \, w_{k}(x)>\inf \mbox{Ran}(\beta) +\frac{\delta'}{4\nu(\Omega_2)}\}$$
and proceed similarly. \qed
\end{proof}

\begin{remark}\label{BONUS} \item{(i)}   Taking limits in \eqref{Lpvariationbound} we obtain that, if $[u,v]$ is a solution of $(GP_\varphi^{\a_p,\gamma,\beta})$, then
 $$
\begin{array}{l}
\displaystyle \frac{c_p}{2}\left(\int_{\Omega}\int_{\Omega}|u(y)-u(x)|^p dm_x(y)d\nu(x)\right)^{\frac{1}{p'}}
\displaystyle \le \Lambda_1\Vert\varphi\Vert_{L^{p'}(\Omega,\nu)}+ \frac{\Lambda_1+\Lambda_2}{\nu(\Omega)^{\frac1p} }\Vert\varphi\Vert_{L^{1}(\Omega,\nu)},
\end{array}
$$
where $c_p$ is the constant in \eqref{llo2}, and $\Lambda_1$ and $\Lambda_2$ come from the   generalised Poincar\'e type inequality  and depend only on $p$, $\Omega_1$ and $\Omega_2$.

\item{(ii)} Observe that, on account of~\eqref{llo1} and the above estimate, we   have
$$
\begin{array}{l}
\displaystyle \left(\int_{\Omega}\left\vert\int_{\Omega}\a_p(x,y,u(y)-u(x))dm_x(y)\right|^{p'}d\nu(x)\right)^{\frac{1}{p'}}
\displaystyle \le C_p\nu(\Omega)+ \frac{2C_p}{c_p}(2\Lambda_1+\Lambda_2)\Vert\varphi\Vert_{L^{p'}(\Omega,\nu)}.
\end{array}
$$
Therefore,   since $[u,v]$ is a solution of $(GP_\varphi^{\a_p,\gamma,\beta})$,
$$
\begin{array}{l}
\displaystyle \Vert v\Vert_{L^{p'}(\Omega,\nu)} \le C_p\nu(\Omega)+\left(\frac{2C_p}{c_p}(2\Lambda_1+\Lambda_2)+1\right)\Vert\varphi\Vert_{L^{p'}(\Omega,\nu)}.
\end{array}
$$

\item{(iii)}     When $\varphi=0$ in $\Omega_2$,    we can easily get that   $v\ll \varphi$ in $\Omega_1.$
\end{remark}

\subsection{Other boundary conditions}\label{secdipierro001}

We can now ask for existence and uniqueness of solutions of the following problem (which was introduced in Section \ref{sub21})
 \begin{equation}\label{02091132}
\left\{ \begin{array}{ll} \gamma\big(u(x))-\hbox{div}_m\a_p u(x) \ni \varphi(x), &  x\in W, \\ [10pt]
\mathcal{N}^{\a_p}_2  u(x)+\beta\big(u(x)\big)\ni \varphi(x), & x\in\partial_mW, \end{array} \right.
\end{equation}
or, of the more general problem,
$$
 \left\{ \begin{array}{ll}\displaystyle \gamma\big(u(x))-   \int_{W\cup\Omega_2} \a_p(x,y,u(y)-u(x)) dm_x(y) \ni \varphi(x), &  x\in \Omega_1=W, \\ [14pt]
 \displaystyle \mathcal{N}^{\a_p}_2  u(x)+\beta\big(u(x)\big)\ni \varphi(x),  & x\in\Omega_2\subseteq\partial_mW. \end{array} \right.
$$
Recall that $\mathcal{N}^{\a_p}_2$ is defined as follows
$$
\mathcal{N}^{\a_p}_2 u(x):=  -\int_{W} \a_p(x,y,u(y)-u(x)) dm_x(y), \ \ x\in \partial_mW,
$$
which involves integration with respect to $\nu$ only over $W$, or more specifically over $\partial_m(X\setminus W)$.

For Problem~\eqref{02091132} we know that, in general, we do not have an appropriate Poincar\'e type inequality to work with (see Remark~\ref{comrem23}). Therefore, other techniques must be used to obtain the existence of solutions. In the particular case of $\gamma(r)=\beta(r)=r$ this was done in~\cite{MST4} by exploiting further monotonicity techniques.

However, if a generalised Poincar\'{e} type inequality (as defined in Definition~\ref{poincareatappendixdef}) is satisfied on $(A,B)=(\Omega_1, \Omega_2)$, we could solve the above problem by using the same techniques that we have used to solve Problem \eqref{02091131}. Indeed, we can work analogously but with the integration by parts formula given in Remark~\ref{remmonII} below. Note that this kind of Poincar\'e type inequality holds, for example, for finite graphs; even if $\Omega_2=\partial_m W$.

\begin{remark}\label{remmonII}
   Let $\Omega:=\Omega_1\cup \Omega_2$.
The following integration by parts formula holds:  Let $u$ be a measurable function such that
   $$[(x,y)\mapsto \a_p(x,y,u(y)-u(x))]\in L^{q}( \left(\Omega\times\Omega\right)\setminus \left(\Omega_2\times\Omega_2\right),\nu\otimes m_x)$$ and let $w \in L^{q'}(\Omega,\nu)$. Then
$$ \begin{array}{l}
 \displaystyle
-\int_{\Omega_1}\int_{\Omega}  \a_p(x,y,u(y)- u (x))dm_x(y)w(x)d\nu(x)\\ [12pt]
\displaystyle
\hspace{10pt} -\int_{\Omega_2}\int_{\Omega_1}  \a_p(x,y,u(y)- u (x))dm_x(y)w(x)d\nu(x)   \\ [14pt]
= \displaystyle\frac{1}{2} \int_{\left(\Omega\times\Omega\right)\setminus \left(\Omega_2\times\Omega_2\right)} \a_p(x,y,u(y)-u(x)) (w(y) - w(x)) d(\nu\otimes m_x)(x,y) .
\end{array}
$$

  \end{remark}

\begin{remark}
It is possible to consider this type of problems but with the random walk and the nonlocal Leray-Lions operator having a different behaviour on each subset $\Omega_i$, $i=1, 2$. For example, one could consider a problem, posed in $\Omega_1\cup\Omega_2\subset \mathbb{R}^N$, such as the following
$$
 \left\{ \begin{array}{ll}\displaystyle \gamma\big(u(x))- \int_{\Omega_1} \a_p^1(x,y,u(y)-u(x)) J_1(x-y)dy \\[10pt]
 \hspace{80pt} \displaystyle- \int_{\Omega_2} \a_p^3(x,y,u(y)-u(x)) J_3(x-y)dx\ni \varphi(x), &  x\in\Omega_1, \\ [14pt]
\displaystyle \beta\big(u(x)\big)- \int_{\Omega_1} \a_p^3(x,y,u(y)-u(x)) J_3(x-y)dy\\[10pt]
\hspace{80pt}\displaystyle -\int_{\Omega_2} \a_p^2(x,y,u(y)-u(x)) J_2(x-y)dx\ni\varphi(x), & x\in\Omega_2, \end{array} \right.
$$
where $J_i$ are kernels like the one in Example~\ref{ejem02}, and $\a_p^i$ are functions like the one in Subsection~\ref{sub21}, $i=1,2,3$. This could be done by obtaining a Poincar\'{e} type inequality involving $\frac{1}{\alpha_0}J_0$, where $J_0$ is the minimum of the previous three kernels and $\alpha_0=\int_{\mathbb{R}^N}J_0(z)dz$.
This idea has been used in~\cite{capannaetal} to study a homogenization problem.
\end{remark}

\section{Doubly nonlinear diffusion problems}\label{lasec3}

We  study two kinds of nonlocal $p$-Laplacian type diffusions problems. In one of them we cover nonlocal nonlinear diffusion problems with nonlinear dynamical boundary conditions and on the other we tackle nonlinear boundary conditions. We work under the Assumptions~\ref{assumption1} to~\ref{assumption3} used in Subsection~\ref{efzly}.

\subsection{Nonlinear dynamical boundary conditions}\label{subsect2}
Our aim in this subsection  is to study the following diffusion problem
\begin{equation}\label{sabore001bevol}
\left\{ \begin{array}{ll}
\displaystyle v_t(t,x)  -   \int_{\Omega} \a_p(x,y,u(t,y)-u(t,x)) dm_x(y)=f(t,x),    &x\in  \Omega_1,\ 0<t<T,
\\ [14pt]
 \displaystyle v(t,x)\in\gamma\big(u(t,x)\big), &
    x\in  \Omega_1,\ 0<t<T,
\\ [8pt]
\displaystyle w_t(t,x) -   \int_{\Omega} \a_p(x,y,u(t,y)-u(t,x)) dm_x(y)=g(t,x) ,   &x\in\Omega_2, \  0<t<T,
\\ [14pt]
 w(t,x) \in\beta\big(u(t,x)\big),    &x\in \Omega_2, \  0<t<T,
 \\ [10pt]
  v(0,x) = v_0(x),    &x\in \Omega_1,
 \\ [10pt]
 w(0,x) = w_0(x),    &x\in \Omega_2, \end{array} \right.
\end{equation}
of which Problem~\eqref{sabore001bevolparticularintro01} is a particular case and
which covers the case of dynamic evolution on the boundary $\partial_mW$ when $\beta\neq \mathbb{R}\times\{0\}$. This includes, in particular, for $\gamma=\mathbb{R}\times\{0\}$, the problem where the dynamic evolution occurs only on the boundary:
$$
\left\{ \begin{array}{ll}
\displaystyle  - \hbox{div}_m\a_p u(t,x)=f(t,x),    &x\in  W,\ 0<t<T,
\\ [10pt]
\displaystyle w_t(t,x)+\mathcal{N}^{\a_p}_\mathbf{1} u(t,x)=g(t,x) ,   &x\in\partial_m W, \  0<t<T,
\\ [10pt]
w(t,x) \in\beta\big(u(t,x)\big),    &x\in\partial_m W, \  0<t<T,
 \\ [10pt]
 w(0,x) = w_0(x),    &x\in \partial_m W. \end{array} \right.
$$
See~\cite{AIMTifb} for the reference local model.

Note that we may abbreviate Problem~\eqref{sabore001bevol} by using $v$ instead of $(v,w)$ and $f$ instead of $(f,g)$ as
 \begin{equation}\label{sabore001bevolshort}
\left\{ \begin{array}{ll}
\displaystyle v_t(t,x)  -   \int_{\Omega} \a_p(x,y,u(t,y)-u(t,x)) dm_x(y)=f(t,x),    &x\in  \Omega,\ 0<t<T,
\\ [14pt]
\displaystyle v(t,x)\in\gamma\big(u(t,x)\big), &
    x\in  \Omega_1,\ 0<t<T,
\\ [10pt]
v(t,x) \in\beta\big(u(t,x)\big),    &x\in \Omega_2, \  0<t<T,
 \\ [10pt]
 v(0,x) = v_0(x),    &x\in \Omega.\end{array} \right.
\end{equation}

To solve this problem we  use nonlinear semigroup theory. To this end we  introduce a multivalued operator  associated to Problem~\eqref{sabore001bevolshort} that allows us to rewrite it as an abstract Cauchy problem. Observe that this operator is defined on $L^1(\Omega,\nu) \equiv\left(L^1(\Omega_1,\nu)\times L^1(\Omega_2,\nu)\right).$

\begin{definition}  {\rm  We say that   $ (v, \hat v) \in \mathcal{B}^{m,\gamma,\beta}_{\a_p}$ if $v,\hat v \in L^1(\Omega,\nu)$,
  and there exists  $ u\in L^p(\Omega,\nu)$  with
  $$u\in \hbox{Dom}(\gamma)  \hbox{ and } v\in\gamma(u) \quad\hbox{$\nu$-a.e.  in } \Omega_1,$$
  and
  $$u\in \hbox{Dom}(\beta)  \hbox{ and }  v\in\beta(u) \quad\hbox{$\nu$-a.e.  in } \Omega_2,$$
  such that
$$
(x,y)\mapsto a_p(x,y,u(y)-u(x))\in L^{p'}(\Omega\times\Omega,\nu\otimes m_x)
$$
and
$$
 -   \int_{\Omega} \a_p(x,y,u(y)-u(x)) dm_x(y) = \hat v \quad \hbox{in} \ \ \Omega;
$$
that is, $[u,v]$ is a solution of   $(GP_{v+{\hat v}})$ (see~\eqref{02091131general} and Definition~\ref{defsol01}).
}
\end{definition}

On account of the results given in Subsection~\ref{efzly} (Theorems \ref{maxandcont01} and \ref{existenceeli01}) we have the following result. Recall that an operator $A$ in $L^1(\Omega,\nu)$ is {\it $T$-accretive} if
$$\Vert (u - \hat u)^+ \Vert_{L^1(\Omega,\nu)} \leq \Vert (u - \hat u + \lambda (v - \hat v))^+
\Vert_{L^1(\Omega,\nu)} \quad \hbox{for every } (u, v), (\hat u, \hat v) \in A \ \
\hbox{and} \ \lambda > 0.$$
In fact, $A$ is $T$-accretive if, and only if, its resolvents are contractions and order-preserving (see, for example, \cite[Appendix]{ElLibro} for further details).

  \begin{theorem}\label{rangecondition02}    The  operator $\mathcal{B}^{m,\gamma,\beta}_{\a_p}$ is   $T$-accretive  in $L^1(\Omega,\nu)$ and satisfies the range condition
$$\left\{\varphi\in L^{p'}(\Omega,\nu):\mathcal{R}_{\gamma,\beta}^-<\int_\Omega\varphi d\nu<\mathcal{R}_{\gamma,\beta}^+\right\} \subset R(I+ \lambda\mathcal{B}^{m,\gamma,\beta}_{\a_p})\quad\hbox{for every }\lambda>0.
$$
\end{theorem}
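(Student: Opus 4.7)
The plan is to reduce both claims to the stationary results already established in Subsection~\ref{efzly}, via a simple rescaling of the Leray--Lions nonlinearity by the factor $\lambda$. The key observation is that, if $\a_p$ satisfies \eqref{ll002}--\eqref{llo2}, then $\tilde\a_p := \lambda \a_p$ also does (with $c_p, C_p$ replaced by $\lambda c_p, \lambda C_p$), and that the thresholds $\mathcal{R}^\pm_{\gamma,\beta}$ of Assumption~\ref{assumption2} depend only on $\gamma$, $\beta$ and $\nu(\Omega_i)$, not on the Leray--Lions nonlinearity. Consequently Theorems~\ref{maxandcont01} and~\ref{existenceeli01} remain applicable after such a rescaling.

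For $T$-accretivity, I will fix $\lambda>0$ and $(v_i,\hat v_i) \in \mathcal{B}^{m,\gamma,\beta}_{\a_p}$ with associated $u_i$, $i=1,2$, and observe that, by the very definition of the operator,
\[
 v_i - \lambda \int_{\Omega} \a_p(x,y,u_i(y)-u_i(x))\,dm_x(y) = v_i + \lambda \hat v_i =: \varphi_i,
\]
so that $[u_i,v_i]$ is a solution of the stationary problem $(GP_{\varphi_i}^{\tilde\a_p,\gamma,\beta})$ associated with $\tilde\a_p=\lambda\a_p$ and the same graphs $\gamma,\beta$. Applying the contraction and comparison principle (Theorem~\ref{maxandcont01}) to this rescaled problem then immediately yields
\[
 \int_\Omega (v_1-v_2)^+\,d\nu \;\le\; \int_\Omega (\varphi_1-\varphi_2)^+\,d\nu \;=\; \int_\Omega \bigl((v_1-v_2)+\lambda(\hat v_1-\hat v_2)\bigr)^+\,d\nu,
\]
which is exactly the desired $T$-accretivity in $L^1(\Omega,\nu)$.

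For the range condition, I will take $\varphi \in L^{p'}(\Omega,\nu)$ with $\mathcal{R}^-_{\gamma,\beta} < \int_\Omega \varphi\,d\nu < \mathcal{R}^+_{\gamma,\beta}$ and apply Theorem~\ref{existenceeli01} directly to $\tilde\a_p=\lambda\a_p$. This produces a pair $[u,v]$ with $u\in L^p(\Omega,\nu)$, $v\in\gamma(u)$ $\nu$-a.e.\ in $\Omega_1$, $v\in\beta(u)$ $\nu$-a.e.\ in $\Omega_2$, $(x,y)\mapsto \a_p(x,y,u(y)-u(x)) \in L^{p'}(\Omega\times\Omega,\nu\otimes m_x)$, and
\[
 v(x) - \lambda \int_\Omega \a_p(x,y,u(y)-u(x))\,dm_x(y) \;=\; \varphi(x), \quad x\in \Omega.
\]
Setting $\hat v(x) := -\int_\Omega \a_p(x,y,u(y)-u(x))\,dm_x(y)$, which belongs to $L^{p'}(\Omega,\nu)\subset L^1(\Omega,\nu)$ by Remark~\ref{BONUS}(ii) and the $\nu$-finiteness of $\Omega$, one gets $(v,\hat v)\in \mathcal{B}^{m,\gamma,\beta}_{\a_p}$ with $v+\lambda \hat v = \varphi$; that is, $\varphi \in R(I+\lambda \mathcal{B}^{m,\gamma,\beta}_{\a_p})$.

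The substantive difficulty has already been absorbed into the proofs of Theorems~\ref{maxandcont01} and~\ref{existenceeli01}, and the main obstacle in the whole programme lay there (the careful monotonicity and $L^p$-boundedness arguments of Subsections~\ref{existenciaurnk}--\ref{existencesection}). Once those are granted, the present statement is essentially a bookkeeping consequence; the only item worth a line of verification is the stability of the structural hypotheses \eqref{ll002}--\eqref{llo2} under multiplication by a positive constant, which is immediate.
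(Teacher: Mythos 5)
Your argument is correct and is essentially the one the paper has in mind: the theorem statement in the paper is given without a written proof, just with the remark that it follows from Theorems~\ref{maxandcont01} and~\ref{existenceeli01}, and your rescaling $\a_p\mapsto\lambda\a_p$ makes that reduction explicit and verifies the (trivial) stability of the structural hypotheses. The only cosmetic imprecision is in the $T$-accretivity step: since $v_i$ is only known to lie in $L^1(\Omega,\nu)$, the pair $[u_i,v_i]$ is, strictly speaking, a sub- and super-solution of $(GP_{\varphi_i}^{\lambda\a_p,\gamma,\beta})$ in the sense of Definition~\ref{defsol01} rather than a ``solution'' (which Definition~\ref{defsol01} requires to have $v\in L^{p'}$), but Theorem~\ref{maxandcont01} is formulated precisely for sub/supersolutions, so the inequality you invoke applies verbatim.
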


With respect to the domain of such operator we can prove the following result.

\begin{theorem}\label{remdom01bevol}  It holds that
 $$\overline{D(\mathcal{B}^{m,\gamma,\beta}_{\a_p})}^{L^{p'}(\Omega,\nu)}
 =\left\{v\in L^{p'}(\Omega,\nu) \, : \, \varGamma^-\le v\le \varGamma^+\ \hbox{$\nu$-\rm a.e. in }\Omega_1, \,
  \mathfrak{B}^-\le v\le \mathfrak{B}^+\ \hbox{$\nu$-\rm a.e. in }\Omega_2
 \right\}.$$
Therefore, we also have that
  $$\overline{D(\mathcal{B}^{m,\gamma,\beta}_{\a_p})}^{L^{1}(\Omega,\nu)}
 =\left\{v\in L^{1}(\Omega,\nu) \, : \, \varGamma^-\le v\le \varGamma^+\ \hbox{$\nu$-\rm a.e. in }\Omega_1, \,
  \mathfrak{B}^-\le v\le \mathfrak{B}^+\ \hbox{$\nu$-\rm a.e. in }\Omega_2
 \right\}.$$
\end{theorem}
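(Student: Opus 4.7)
The plan is to prove the $L^{p'}$ characterization by a two-sided inclusion argument, after which the $L^1$ statement follows from the same construction.

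Write $K:=\{v\in L^{p'}(\Omega,\nu): \varGamma^-\le v\le \varGamma^+\text{ in }\Omega_1,\ \mathfrak{B}^-\le v\le \mathfrak{B}^+\text{ in }\Omega_2\}$. The inclusion $\overline{D(\mathcal{B}^{m,\gamma,\beta}_{\a_p})}^{L^{p'}}\subset K$ is immediate: $K$ is $L^{p'}$-closed since any $L^{p'}$-convergent sequence admits a subsequence converging pointwise $\nu$-a.e., and the pointwise bounds pass to the limit; and by the definition of $\mathcal{B}^{m,\gamma,\beta}_{\a_p}$, any $v\in D(\mathcal{B}^{m,\gamma,\beta}_{\a_p})$ admits a witness $u$ with $v\in\gamma(u)\subset[\varGamma^-,\varGamma^+]$ $\nu$-a.e.\ on $\Omega_1$ and $v\in\beta(u)\subset[\mathfrak{B}^-,\mathfrak{B}^+]$ $\nu$-a.e.\ on $\Omega_2$.

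For the reverse inclusion, given $v\in K$ I build an approximation $v_n\in D(\mathcal{B}^{m,\gamma,\beta}_{\a_p})$ with $v_n\to v$ in $L^{p'}(\Omega,\nu)$ by truncating $v$ away from the endpoints of the ranges of $\gamma$ and $\beta$ and then lifting through the inverse maximal monotone graphs. Choose $a_n^-,b_n^+\in\mathrm{int}(\mathrm{Ran}(\gamma))$ with $a_n^-\downarrow\varGamma^-$ and $b_n^+\uparrow\varGamma^+$ (take, for instance, $\varGamma^-+1/n$ or $-n$ depending on whether $\varGamma^-$ is finite, and analogously for $b_n^+$), and $\tilde a_n^-,\tilde b_n^+\in\mathrm{int}(\mathrm{Ran}(\beta))$ similarly. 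Let $v_n$ be the pointwise truncation of $v$ to $[a_n^-,b_n^+]$ on $\Omega_1$ and to $[\tilde a_n^-,\tilde b_n^+]$ on $\Omega_2$, and set $u_n(x):=\gamma^{-1,0}(v_n(x))$ on $\Omega_1$ and $u_n(x):=\beta^{-1,0}(v_n(x))$ on $\Omega_2$, where $\gamma^{-1,0}$ and $\beta^{-1,0}$ denote the minimal selections of the (maximal monotone) inverse graphs. Then $v_n\to v$ pointwise $\nu$-a.e.\ with $|v_n-v|\le|v|+1$, so dominated convergence (using $\nu(\Omega)<+\infty$) gives $v_n\to v$ in $L^{p'}(\Omega,\nu)$. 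Since one-dimensional maximal monotone graphs are locally bounded on the interior of their effective domains, $u_n\in L^\infty(\Omega,\nu)\subset L^p(\Omega,\nu)$; by construction $v_n\in\gamma(u_n)$ $\nu$-a.e.\ in $\Omega_1$ and $v_n\in\beta(u_n)$ $\nu$-a.e.\ in $\Omega_2$; and the growth bound \eqref{llo1} together with $(\nu\otimes m_x)(\Omega\times\Omega)\le\nu(\Omega)<+\infty$ yields $\a_p(x,y,u_n(y)-u_n(x))\in L^{p'}(\Omega\times\Omega,\nu\otimes m_x)$. Setting $\hat v_n(x):=-\int_{\Omega}\a_p(x,y,u_n(y)-u_n(x))\,dm_x(y)\in L^\infty(\Omega,\nu)$, the pair $(v_n,\hat v_n)$ lies in $\mathcal{B}^{m,\gamma,\beta}_{\a_p}$, so $v_n\in D(\mathcal{B}^{m,\gamma,\beta}_{\a_p})$.

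The $L^1$ conclusion follows verbatim, since the same $v_n$ also converge to $v$ in $L^1(\Omega,\nu)$ by dominated convergence, and constants are $L^1$-integrable as $\nu(\Omega)<+\infty$. I expect the main technical subtlety to be the local boundedness of $\gamma^{-1,0}$ and $\beta^{-1,0}$ on $(\varGamma^-,\varGamma^+)$ and $(\mathfrak{B}^-,\mathfrak{B}^+)$, which is what keeps $u_n$ bounded and hence produces a divergence term $\hat v_n$ of sufficient regularity; this is however a classical property of one-dimensional maximal monotone graphs. The degenerate cases $\varGamma^-=\varGamma^+$ or $\mathfrak{B}^-=\mathfrak{B}^+$ are handled separately by noting that then the unique admissible constant value of $v$ on the corresponding subset is already in $D(\mathcal{B}^{m,\gamma,\beta}_{\a_p})$ via the trivial witness $u\equiv 0$.
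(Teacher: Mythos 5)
Your proof is correct, and it takes a genuinely different route from the paper. The paper proves density by running the standard resolvent argument from nonlinear semigroup theory: it sets $v_n=(I+\tfrac1n\mathcal{B}^{m,\gamma,\beta}_{\a_p})^{-1}(v)$, invokes the elliptic existence result (Theorem~\ref{existenceeli01}) for the operator with $\tfrac1n\a_p$, and then obtains a uniform $L^\infty$ bound on the witnesses $u_n$ via the comparison principle (Theorem~\ref{maxandcont01}) against explicit constant super- and subsolutions, so that $v_n-v=\tfrac1n\int_\Omega\a_p(\cdot,\cdot,u_n(y)-u_n(x))\,dm_x(y)\to0$; this forces a fairly long case analysis on the sign pattern of $\varGamma^{\pm}$, $\mathfrak{B}^{\pm}$. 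You bypass all of that: you truncate $v$ into a compact subinterval of the interior of $\mathrm{Ran}(\gamma)$ (resp.\ $\mathrm{Ran}(\beta)$), lift through the minimal section of the inverse graph, and then use only the growth bound~\eqref{llo1}, the finiteness of $\nu\otimes m_x$ on $\Omega\times\Omega$, and dominated convergence. The ingredients you rely on are classical and independent of the elliptic existence theory, so your argument is both shorter and more self-contained; what makes it possible in this nonlocal setting is precisely that bounded $u$ produce bounded $\hat v=-\int_\Omega\a_p\,dm_x$, a luxury one would not have for a local differential operator, which is presumably why the authors reached for the resolvent machinery. Two small points worth tightening when writing this up: make explicit that $\mathrm{Ran}(\gamma)$ is an interval with interior $(\varGamma^-,\varGamma^+)$ (this is what justifies the choice of $a_n^-,b_n^+$ and the local boundedness of $\gamma^{-1,0}$), and in the degenerate case $\gamma=\R\times\{0\}$ state that the witness is $u_n\equiv0$ on $\Omega_1$ only, with the usual truncation/lifting retained on $\Omega_2$, since Assumption~\ref{assumption2} guarantees at most one of $\gamma,\beta$ is degenerate.
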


\begin{proof}
It is obvious that
 $$\overline{D(\mathcal{B}^{m,\gamma,\beta}_{\a_p})}^{L^{p'}(\Omega,\nu)}
 \subset\left\{v\in L^{p'}(\Omega,\nu) \, : \, \varGamma^-\le v\le \varGamma^+\ \hbox{$\nu$-\rm a.e. in }\Omega_1, \,
  \mathfrak{B}^-\le v\le \mathfrak{B}^+\ \hbox{$\nu$-\rm a.e. in }\Omega_2
 \right\}.$$
  For the other inclusion it is enough to see that
$$\left\{v\in L^{\infty}(\Omega,\nu) \, : \, \varGamma^-\le v\le \varGamma^+\ \hbox{$\nu$-\rm a.e. in }\Omega_1, \,
  \mathfrak{B}^-\le v\le \mathfrak{B}^+\ \hbox{$\nu$-\rm a.e. in }\Omega_2
 \right\}\subset\overline{D(\mathcal{B}^{m,\gamma,\beta}_{\a_p})}^{L^{p'}(\Omega,\nu)}.
$$

Suppose first that $\gamma$ and $\beta$ satisfy
$$\begin{array}{ll}\varGamma^-<0, &\varGamma^+>0,\\[6pt]
\mathfrak{B}^-=0, & \mathfrak{B}^+>0.
\end{array}$$
It is enough to see that for any $v\in L^\infty(\Omega,\nu)$ such that there exist $m_1<0$, $\widetilde{m_i}\in \R$, $\widetilde{M_i}\in\R$, $M_i>0$, $i=1,2$, satisfying
$$\begin{array}{c}\varGamma^-<m_1< \widetilde{m_1}\le v\le \widetilde{M_1}< M_1< \varGamma^+\ \ \hbox{$\nu$-\rm a.e. in }\Omega_1, \\[6pt]
 0<\widetilde{m_2}\le v\le \widetilde{M_2}< M_2<\mathfrak{B}^+\ \ \hbox{$\nu$-\rm a.e. in }\Omega_2,
\end{array}
 $$
 it holds that $v\in \overline{D(\mathcal{B}^{m,\gamma,\beta}_{\a_p})}^{L^{p'}(\Omega,\nu)}$.

   By the results in Subsection \ref{existencesection} we know that, for $n\in\N$, there exists $u_n\in L^p(\Omega,\nu)$ and $v_n\in L^{p'}(\Omega,\nu)$ such that $[u_n,v_n]$ is a solution of $\displaystyle\left(GP_v^{\frac{1}{n}\a_p,\gamma,\beta}\right)$, i.e.,
$v_n\in \gamma(u_n)$ $\nu$-a.e. in $\Omega_1$, $v_n\in \beta(u_n)$ $\nu$-a.e. in $\Omega_2$ and
$$
 v_n(x) -\frac1n\int_{\Omega} \a_p(x,y,u_n(y)-u_n(x)) dm_x(y) = v(x) \quad \hbox{ for $\nu$-a.e. } x\in\Omega.
$$
In other words, $(v_n,n(v-v_n))\in \mathcal{B}^{m,\gamma,\beta}_{\a_p}$ or, equivalently,
$$v_n:=\left(I+\frac{1}{n}\mathcal{B}^{m,\gamma,\beta}_{\a_p}\right)^{-1}(v) \in  D(\mathcal{B}^{m,\gamma,\beta}_{\a_p}).$$
Let us see that  $v_n\stackrel{n}{\longrightarrow} v$ in $L^{p'}(\Omega,\nu)$.

 Let $a_{m_1}\le 0$ and $a_{M_1}\ge 0$ such that
 $$\hbox{$m_1\in\gamma(a_{m_1})\,$ and $\, M_1\in \gamma(a_{M_1})$,}$$
 and let $b_{M_2}\ge 0$ such that
 $$\hbox{$M_2\in \beta(b_{M_2})$.}$$
Set
$$\widehat{v}(x):=\left\{\begin{array}{cc}
             M_1,\, & x\in\Omega_1, \\
             M_2,\, & x\in\Omega_2,
           \end{array}\right.$$
   $$\widehat{u}(x):=\left\{\begin{array}{cc}
             a_{M_1},\, & x\in\Omega_1, \\
             b_{M_2},\, & x\in\Omega_2,
           \end{array}\right. $$
      and
      $$\widehat{\varphi}_n(x):=\left\{\begin{array}{cc}
             \displaystyle  M_1-\frac{1}{n}\int_{\Omega} \a_p(x,y,\widehat{u}(y)-\widehat{u}(x))dm_x(y),\, & x\in\Omega_1, \\ [12pt]
              \displaystyle M_2-\frac{1}{n}\int_{\Omega} \a_p(x,y,\widehat{u}(y)-\widehat{u}(x))dm_x(y),\, & x\in\Omega_2.
           \end{array}\right.$$
Then, $[\widehat{u},\widehat{v}]$ is a solution of $\displaystyle\left(GP_{\widehat{\varphi}_n}^{\frac{1}{n}\a_p,\gamma,\beta}\right)$.

Similarly, for
$$\widetilde{v}(x):=\left\{\begin{array}{cc}
             m_1, \, & x\in\Omega_1, \\
             0, \, & x\in\Omega_2,
           \end{array}\right.$$
              $$\widetilde{u}(x):=\left\{\begin{array}{cc}
             a_{m_1},\, & x\in\Omega_1, \\
             0,\, & x\in\Omega_2,
           \end{array}\right. $$
      and
      $$\widetilde{\varphi}_n(x):=\left\{\begin{array}{cc}
             \displaystyle  m_1-\frac{1}{n}\int_{\Omega_2} \a_p(x,y,-a_{m_1})dm_x(y), & x\in\Omega_1,\, \\ [12pt]
              \displaystyle \frac{1}{n}\int_{\Omega_1} \a_p(x,y,- a_{m_1})dm_x(y), & x\in\Omega_2,
           \end{array}\right.$$
 we have that $[\widetilde{u},\widetilde{v}]$ is a solution of $\displaystyle\left(GP_{\widetilde{\varphi}_n}^{\frac{1}{n}\a_p,\gamma,\beta}\right)$.

  Now, recalling~\eqref{llo1}, we have that there exists $n_0\in\N$ such that
  $$v\le\widetilde{M_1}\1_{\Omega_1} + \widetilde{M_2}\1_{\Omega_2}< \widehat{\varphi}_n \ \ \hbox{$\nu$-\rm a.e. in $\Omega$}$$
and
$$v\ge\widetilde{m_1}\1_{\Omega_1} + \widetilde{m_2}\1_{\Omega_2}> \widetilde{\varphi}_n \ \ \hbox{$\nu$-\rm a.e. in $\Omega$}$$
for $n\ge n_0$. Consequently, by the maximum principle (Theorem \ref{maxandcont01}) we obtain that
$$\widetilde{u}\le u_n\le\widehat{u}, $$
thus
$$\left\{\Vert u_n\Vert_{L^{\infty}(\Omega,\nu)}\right\}_{n} \hbox{ is bounded.}$$
Finally, since
$$
 v_n(x) - v(x) =  \frac1n\int_{\Omega} \a_p(x,y,u_n(y)-u_n(x)) dm_x(y)\quad \hbox{ $\nu$-a.e. in} \  \Omega,
$$
we conclude that, on account of~\eqref{llo1}, $$\hbox{$v_n\stackrel{n}{\longrightarrow} v$ in   $L^{p'}(\Omega,\nu)$.}$$

The other cases follow similarly, we  see two of them. Note that, since $\mathcal{R}_{\gamma,\beta}^-<\mathcal{R}_{\gamma,\beta}^+$, it is not possible to have $\gamma=\mathbb{R}\times\{0\}$ and $\beta=\mathbb{R}\times\{0\}$ simultaneously.   For example, suppose that we have
$$\begin{array}{ll}\varGamma^-=0,\, &\varGamma^+>0,\\[6pt]
\mathfrak{B}^-=0,\, & \mathfrak{B}^+>0.
\end{array}$$
We  use the same notation. Let $v\in L^\infty(\Omega,\nu)$ such that there exist $\widetilde{m_i}\in \R$, $\widetilde{M_i}\in\R$, $M_i>0$, $i=1,2$, satisfying
$$\begin{array}{l}0< \widetilde{m_1}\le v\le \widetilde{M_1}< M_1< \varGamma^+\ \hbox{ $\nu$-\rm a.e. in }\Omega_1, \\[6pt]
 0<\widetilde{m_2}\le v\le \widetilde{M_2}< M_2<\mathfrak{B}^+\ \hbox{ $\nu$-\rm a.e. in }\Omega_2.
\end{array}
 $$
 As before, the results in Subsection \ref{existencesection} ensure that there exist $u_n\in L^p(\Omega,\nu)$ and $v_n\in L^{p'}(\Omega,\nu)$, $n\in\N$, such that $[u_n,v_n]$ is a solution of $\displaystyle\left(GP_v^{\frac{1}{n}\a_p,\gamma,\beta}\right)$.
  Let $\ a_{M_1}\ge 0$ and $b_{M_2}\ge 0$ such that
 $$\hbox{$M_1\in \gamma(a_{M_1})\,$ and $\, M_2\in \beta(b_{M_2})$.}$$
Now again, let
$$\widehat{v}(x):=\left\{\begin{array}{cc}
             M_1,\, & x\in\Omega_1, \\
             M_2,\, & x\in\Omega_2,
           \end{array}\right.$$
   $$\widehat{u}(x):=\left\{\begin{array}{cc}
             a_{M_1},\, & x\in\Omega_1, \\
             b_{M_2},\, & x\in\Omega_2,
           \end{array}\right. $$
      and
      $$\widehat{\varphi}_n(x):=\left\{\begin{array}{cc}
             \displaystyle  M_1-\frac{1}{n}\int_{\Omega} \a_p(x,y,\widehat{u}(y)-\widehat{u}(x))dm_x(y),\, & x\in\Omega_1, \\ [12pt]
              \displaystyle M_2-\frac{1}{n}\int_{\Omega} \a_p(x,y,\widehat{u}(y)-\widehat{u}(x))dm_x(y),\, & x\in\Omega_2.
           \end{array}\right.$$
Then, as before, $[\widehat{u},\widehat{v}]$ is a solution of $\displaystyle\left(GP_{\widehat{\varphi}_n}^{\frac{1}{n}\a_p,\gamma,\beta}\right)$.

Now, taking $\widetilde{v}$, $\widetilde{u}$ and $\widetilde\varphi$ all equal to the null function in $\Omega$ and recalling that $\a_p(x,y,0)=0$ for every $x, y\in X$, we obviously have that $[\widetilde{u},\widetilde{v}]$ is a solution of $\displaystyle\left(GP_{0}^{\frac{1}{n}\a_p,\gamma,\beta}\right)$. Consequently, again by the second part of the maximum principle, we obtain, as desired, that $0\le u_n\le \widehat{v}$ for $n$ large enough.

  Finally, as a further example of a case which does not follow exactly with the same argument, suppose that $\gamma:=\R\times\{0\}$ and, for example,
$$\mathfrak{B}^-=0, \ \mathfrak{B}^+>0.$$
In this case we take $0\not\equiv v\in L^\infty(\Omega,\nu)$ such that $v=0$ in $\Omega_1$ and
$$0\le v <M_2 \ \hbox{ $\nu$-\rm a.e. in $\Omega_2$ \ for some constant $M_2>0$}.$$
As in the previous cases, there exist $u_n\in L^p(\Omega,\nu)$ and $v_n\in L^{p'}(\Omega,\nu)$, $n\in\N$, such that $[u_n,v_n]$ is a solution of $\displaystyle\left(GP_v^{\frac{1}{n}\a_p,\gamma,\beta}\right)$.
Let $b_{M_2}\ge 0 $ such that $M_2\in\beta(b_{M_2})$,
$$\widehat{v}(x):=\left\{\begin{array}{cc}
             0, & x\in\Omega_1, \\
             M_2,\, & x\in\Omega_2,
           \end{array}\right.$$
   $$\widehat{u}(x):=b_{M_2}, \ x\in\Omega, $$
      and
      $$\varphi_n(x):=\left\{\begin{array}{lc}
             \displaystyle  0, & x\in\Omega_1, \\
              \displaystyle M_2,\, & x\in\Omega_2.
           \end{array}\right.$$
Then, $[\widehat{u},\widehat{v}]$ is a solution of $\displaystyle\left(GP_{\varphi_n}^{\frac{1}{n}\a_p,\gamma,\beta}\right)$. Finally, take $\widetilde{v}$ and $\widetilde{u}$ again equal to the null function in $\Omega$ so that $[\widetilde{u},\widetilde{v}]$ is a solution of $\displaystyle\left(GP_{0}^{\frac{1}{n}\a_p,\gamma,\beta}\right)$. Consequently, for $n$ large enough, we get that $0\le u_n\le \widehat{v}$.
\qed
\end{proof}

In the next result we state the existence and uniqueness of solutions of Problem~\eqref{sabore001bevolshort}.

\begin{theorem} \label{nsth01bevol}
Let  $T>0$. For any
$v_0\in L^{1}(\Omega,\nu)$ and $f\in L^1(0,T;L^{1}(\Omega,\nu))$ such that
$$\varGamma^-\le v_0\le \varGamma^+\quad\hbox{$\nu$-\rm a.e. in }\Omega_1,$$
$$\mathfrak{B}^-\le v_0\le \mathfrak{B}^+\quad\hbox{$\nu$-\rm a.e. in }\Omega_2,$$
and
\begin{equation}\label{loquenec001}\mathcal{R}_{\gamma,\beta}^-<\int_{\Omega}v_0d\nu
+\int_0^t\int_\Omega fd\nu ds <\mathcal{R}_{\gamma,\beta}^+ \quad\hbox{for every } 0 \le t\le T,
\end{equation}
  there exists a unique mild-solution $v\in C([0,T];L^1(\Omega,\nu))$  of Problem~\eqref{sabore001bevolshort}.

  Let $v$ and $\widetilde v$ be the mild solutions of Problem~\eqref{sabore001bevolshort} with respective data $v_0,\ \widetilde v_0\in L^{1}(\Omega,\nu)$ and
   $f,\ \widetilde f\in L^1(0,T;L^{1}(\Omega,\nu))$. Then
   $$
   \begin{array}{rl}\displaystyle\int_\Omega \left(v(t,x)-\widetilde v(t,x)\right)^+d\nu(x)&\displaystyle\le
   \int_\Omega \left(v_0(x)-\widetilde v_0(x)\right)^+d\nu(x)\\[12pt]
   &\displaystyle
   \hspace{10pt} +\int_0^t\int_{\Omega}\left(f(s,x)-\widetilde f(s,x)\right)^+d\nu(x)ds \quad\hbox{for every } 0\le   t\le T.
   \end{array}$$

If, in addition to the previous assumptions on the data, we impose that
 \begin{equation}\label{conditiononjstar}
  v_0\in L^{p'}(\Omega,\nu),   \ f\in L^{p'}(0,T;L^{p'}(\Omega,\nu)) \
 \hbox{ and } \ \int_{\Omega_1}j_\gamma^*(v_0)d\nu+\int_{\Omega_2}j_\beta^*(v_0)d\nu<+\infty,
 \end{equation}
     then the mild solution  $v$ belongs to $W^{1,1}(0,T;L^{1}(\Omega,\nu))$ and satisfies
     $$
     \left\{
     \begin{array}{l}\partial_tv(t)+\mathcal{B}^{m,\gamma,\beta}_{\a_p}v(t)\ni f(t)\quad\hbox{for a.e. }t\in(0,T),\\[6pt]
     v(0)=v_0,\end{array}\right.$$
     that is, $v$ is  a strong solution.
\end{theorem}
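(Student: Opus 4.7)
The strategy is to recognize Problem \eqref{sabore001bevolshort} as the abstract Cauchy problem $v_t + \mathcal{B}^{m,\gamma,\beta}_{\a_p}v \ni f$, $v(0)=v_0$ in $L^1(\Omega,\nu)$, and apply nonlinear semigroup theory. The two structural ingredients are already in hand: Theorem \ref{rangecondition02} provides the $T$-accretivity of $\mathcal{B}^{m,\gamma,\beta}_{\a_p}$ together with the range condition, while Theorem \ref{remdom01bevol} identifies the closure of its domain in $L^1(\Omega,\nu)$ with precisely the set of admissible initial data appearing in the statement. The Crandall--Liggett--B\'enilan generation theorem is therefore the natural engine for the mild-solution part, and an energy estimate driven by the Legendre duals $j_\gamma^*$, $j_\beta^*$ will upgrade this to a strong solution under \eqref{conditiononjstar}.

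For the mild solution I would discretize in time: given a partition $0=t_0<t_1<\cdots<t_n=T$ and piecewise-constant approximations $f_i$ of $f$, solve inductively the implicit Euler scheme
\begin{equation}
v_i + (t_i-t_{i-1})\mathcal{B}^{m,\gamma,\beta}_{\a_p}(v_i) \ni v_{i-1} + (t_i-t_{i-1})f_i, \qquad v_0 \text{ given}.
\end{equation}
Solvability at each step reduces to applying Theorem \ref{rangecondition02}, for which I must check that the right-hand side $\varphi_i:=v_{i-1}+(t_i-t_{i-1})f_i$ has integral strictly inside $(\mathcal{R}_{\gamma,\beta}^-, \mathcal{R}_{\gamma,\beta}^+)$. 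Testing $\mathcal{B}^{m,\gamma,\beta}_{\a_p}v$ against the constant $1$ and applying the integration-by-parts formula of Remark \ref{remmon} gives $\int_\Omega \mathcal{B}^{m,\gamma,\beta}_{\a_p}(v)d\nu = 0$; by induction, $\int_\Omega \varphi_i\,d\nu = \int_\Omega v_0\,d\nu + \int_0^{t_i}\!\int_\Omega f\,d\nu\,ds$, which by hypothesis \eqref{loquenec001} falls in the required range. Standard $\varepsilon$-approximation then yields convergence of the piecewise-constant interpolants in $C([0,T];L^1(\Omega,\nu))$ to a unique mild solution, and the $L^1$-contraction estimate is an immediate consequence of $T$-accretivity.

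For the strong-solution part under \eqref{conditiononjstar}, my plan is to run an energy argument on the same Euler iterates. Since $\gamma=\partial j_\gamma$ and $\beta=\partial j_\beta$, the dual functional $\Phi(v):=\int_{\Omega_1}j_\gamma^*(v)d\nu+\int_{\Omega_2}j_\beta^*(v)d\nu$ is convex and finite at $v_0$ by assumption. Testing the $i$-th step against the $u_i$ associated with $v_i$ (so that $u_i$ is a selection from $\gamma^{-1}(v_i)=\partial j_\gamma^*(v_i)$ on $\Omega_1$ and analogously on $\Omega_2$), using convexity, the integration by parts formula of Remark \ref{remmon}, and the coercivity \eqref{llo2} yields the discrete energy inequality
\begin{equation}
\Phi(v_i)-\Phi(v_{i-1}) + c_p(t_i-t_{i-1})\!\int_\Omega\!\int_\Omega |u_i(y)-u_i(x)|^p dm_x(y)d\nu(x) \le (t_i-t_{i-1})\!\int_\Omega u_i f_i\,d\nu.
\end{equation}
Summing in $i$ and absorbing the right-hand side via Young's inequality produces, together with Remark \ref{BONUS}(ii), a uniform $L^{p'}(\Omega,\nu)$ bound on $\mathcal{B}^{m,\gamma,\beta}_{\a_p}(v_i)$, hence on the discrete time derivatives $(v_i-v_{i-1})/(t_i-t_{i-1})$. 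Passing to the limit along the mesh (using $T$-accretivity to control differences of iterates for different meshes) upgrades the piecewise-affine interpolants to a function $v\in W^{1,1}(0,T;L^{p'}(\Omega,\nu))$ satisfying the differential inclusion for a.e.\ $t$.

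The main obstacle will be precisely this last step. In the local or fractional settings one typically relies on Rellich-type compactness to pass to the limit in the nonlinear terms, but in our purely nonlocal framework no such compactness is available, as repeatedly emphasized in Section \ref{lasec2}. Replacing compactness by the convex-duality bookkeeping encoded in $\Phi$ and by monotonicity arguments (as in Theorem \ref{existenceeli01}) is what makes the passage to the limit feasible; the genuine difficulty is that both nonlinearities act simultaneously---the $\gamma,\beta$ side on the time derivative and the $\a_p$ side on the nonlocal gradient---so the energy bookkeeping must be tight enough to extract $W^{1,1}(0,T;L^{p'}(\Omega,\nu))$ regularity without assuming any smoothing or compactness of the underlying semigroup.
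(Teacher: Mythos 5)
Your outline of the mild-solution part coincides with the paper's proof: implicit Euler on a partition, solvability of each step through Theorem~\ref{rangecondition02}, and the observation that $\int_\Omega v_i\,d\nu=\int_\Omega v_0\,d\nu+\sum_{j\le i}\frac{T}{n}\int_\Omega f_j\,d\nu$ (so condition~\eqref{loquenec001} propagates through the recursion) together with the closure-of-domain characterization of Theorem~\ref{remdom01bevol} and $T$-accretivity. That part is sound and matches the paper essentially line for line.

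For the strong-solution part, there is a genuine gap in the last step. You correctly derive the discrete energy inequality via convexity of $j_\gamma^*,j_\beta^*$, and you correctly observe that, after Young's inequality, one gets control of $\int_0^T\!\int_\Omega\!\int_\Omega|u_n(y)-u_n(x)|^p\,dm_x\,d\nu\,dt$; via \eqref{llo1} this yields a bound on $\a_p(x,y,u_n(\cdot)(y)-u_n(\cdot)(x))$ in $L^{p'}(0,T;L^{p'}(\Omega\times\Omega,\nu\otimes m_x))$ (not a pointwise-in-$t$ bound, which is what your phrase ``uniform $L^{p'}(\Omega,\nu)$ bound on $\mathcal{B}^{m,\gamma,\beta}_{\a_p}(v_i)$'' suggests; the bound is only in time-integrated form, which nevertheless suffices for $W^{1,1}(0,T;L^{p'}(\Omega,\nu))$ after testing the weak formulation). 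The serious missing ingredient is how to identify the weak limit. You obtain $u_n\rightharpoonup u$ weakly in $L^p(0,T;L^p(\Omega,\nu))$ and $\a_p(x,y,u_n(y)-u_n(x))\rightharpoonup\Phi$ weakly in $L^{p'}$, and the differential inclusion requires showing $\int_\Omega\Phi(t,x,y)\,dm_x(y)=\int_\Omega\a_p(x,y,u(t)(y)-u(t)(x))\,dm_x(y)$ for a.e.\ $(t,x)$. Since, as you note, there is no compactness to upgrade the weak convergence of $u_n$, this must be done by a Minty-type monotonicity argument, and the crux of that argument is establishing
$$
\limsup_n\int_0^T\!\!\int_\Omega\!\int_\Omega\a_p(x,y,u_n(y)-u_n(x))(u_n(y)-u_n(x))\,dm_x\,d\nu\,dt\ \le\ \int_0^T\!\!\int_\Omega\!\int_\Omega\Phi\,(u(y)-u(x))\,dm_x\,d\nu\,dt,
$$
i.e., the claim~\eqref{finalclaim}. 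Proving this requires first showing that $t\mapsto\int_{\Omega_1}j_\gamma^*(v(t))\,d\nu+\int_{\Omega_2}j_\beta^*(v(t))\,d\nu$ belongs to $W^{1,1}(0,T)$ and computing its derivative to be $-\int_\Omega F(t)u(t)\,d\nu$, which the paper does via Steklov (time-difference-quotient) regularization of the test functions $\eta_\tau(t)=\frac1\tau\int_t^{t+\tau}u(s)\psi(s)\,ds$ together with the subgradient inequality for $\gamma^{-1}=\partial j_\gamma^*$, $\beta^{-1}=\partial j_\beta^*$, and then an opposite inequality via $\tilde\eta_\tau$. Your proposal recognizes that ``the energy bookkeeping must be tight enough'' but does not supply this chain-rule-for-$\Phi(v(\cdot))$ argument, which is the one step that replaces compactness and makes the passage to the limit actually close; without it the weak limit $\Phi$ is not identified and the strong solution is not obtained.
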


\begin{proof}
We start by proving the existence of mild solutions. For $n\in\N$, consider the partition
$$t_0^n=0<t_1^n<\cdots <t_{n-1}^n<t_n^n=T$$
where $t_i^n:=iT/n$, $i=1,\ldots,n$. Given $\epsilon>0$,  there exists   $n\in \mathbb{N}$, $f_i^n\in L^{p'}(\Omega,\nu)$, $i=1,\ldots n$, and $v_0^n \in \overline{D(\mathcal{B}^{m,\gamma,\beta}_{\a_p})}^{L^{p'}(\Omega,\nu)}$ (i.e., $v_0^n\in L^{p'}(\Omega,\nu)$ satisfying $\varGamma^-\le v_0^n\le \varGamma^+$ $\nu$-a.e. in $\Omega_1$, and
  $\mathfrak{B}^-\le v_0^n\le \mathfrak{B}^+$ $\nu$-a.e. in $\Omega_2$)  such that   $T/n\le \epsilon$,
\begin{equation}\label{laap01} \sum_{i=1}^n\int_{t_{i-1}^n}^{t_i^n}\Vert f(t)- f_i^n\Vert_{L^{1}(\Omega,\nu)}dt  \le   \epsilon
\end{equation}
and
\begin{equation}\label{laap02}\Vert v_0-v_0^n\Vert_{L^1(\Omega,\nu)} \le  \epsilon.
\end{equation}
Then, setting
$$f_n(t):=f_i^n  \ \hbox{ for $t\in]t_{i-1}^n,t_i^n]$, $i=1,\ldots, n$},
$$ we have that
$$ \int_0^T \Vert f(t)- f_n(t) \Vert_{L^1(\Omega,\nu)}dt \le \epsilon.$$
By the results in Subsection \ref{existencesection} we  see that, for $n$ large enough, we may recursively find a solution $[u_i^n,v_i^n]$ of  $\displaystyle \left(GP^{\frac{T}{n}\a_p,\gamma,\beta}_{\frac{T}{n} f_i^n+v_{i-1}^n}\right)$, $i=1,\ldots,n$, in other words,
$$
v_i^n(x)-\frac{T}{n}\int_\Omega
\a_p(x,y,u_i^n(y)-u_i^n(x))dm_x(y)=\frac{T}{n} f_i^n(x)+v_{i-1}^n(x), \ \, x\in\Omega,
$$
or, equivalently,
\begin{equation}\label{solutiondiscretization2}
\frac{v_i^n(x)-v_{i-1}^n(x)}{T/n}-\int_\Omega
\a_p(x,y,u_i^n(y)-u_i^n(x))dm_x(y)= f_i^n(x), \ \, x\in\Omega, \end{equation}
with $v_i^n(x)\in\gamma(u_i^n(x))$ for $\nu$-a.e. $x\in\Omega_1$ and $v_i^n(x)\in\beta(u_i^n(x))$ for $\nu$-a.e. $x\in\Omega_2$, $i=1,\ldots,n$. That is, we may find the  unique solution $v_i^n$ of the time discretization scheme associated with~\eqref{sabore001bevolshort}:
$$v_i^n+\frac{T}{n} \mathcal{B}_{\a_p}^{m,\gamma,\beta}(v_i^n)\ni \frac{T}{n} f_i^n + v_{i-1}^n \ \hbox{ for $i=1,\ldots,n$}.$$
  However, to apply the results in Subsection \ref{existencesection}, we must ensure that
\begin{equation}\label{finalexistenceconditionphi}
 \mathcal{R}_{\gamma,\beta}^-<\int_\Omega\left(\frac{T}{n} f_i^n + v_{i-1}^n\right) d\nu<\mathcal{R}_{\gamma,\beta}^+
\end{equation}
 holds for each step.
For the first step we need that
  $$\mathcal{R}_{\gamma,\beta}^-<\int_{\Omega}v_0^n d\nu
+\frac{T}{n}\int_\Omega f_1^n d\nu  <\mathcal{R}_{\gamma,\beta}^+$$
holds so that condition~\eqref{finalexistenceconditionphi} is satisfied.  Integrating \eqref{solutiondiscretization2} with respect to $\nu$ over $\Omega$  we get
$$\int_\Omega v_1^n d\nu=\int_{\Omega}v_0^n d\nu
+\frac{T}{n}\int_\Omega f_1^n d\nu$$
thus
$$\frac{T}{n} \int_\Omega f_2^n d\nu +\int_\Omega v_{1}^n d\nu=\frac{T}{n} \sum_{j=1}^2 \int_\Omega f_j^n d\nu +\int_\Omega v_{0}^n d\nu, $$
so that, for the second step, we need
 $$\mathcal{R}_{\gamma,\beta}^-<\frac{T}{n} \sum_{j=1}^2 \int_\Omega f_j^n d\nu +\int_\Omega v_{0}^n d\nu  <\mathcal{R}_{\gamma,\beta}^+.$$
Therefore, we recursively obtain that, for each $n$ and each step $i=1,\ldots, n$, the following must be satisfied:
$$\mathcal{R}_{\gamma,\beta}^-< \frac{T}{n} \sum_{j=1}^i \int_\Omega f_j^n d\nu +\int_\Omega v_{0}^{n} d\nu <\mathcal{R}_{\gamma,\beta}^+. $$
However, taking $n$ large enough,  this holds thanks to~\eqref{loquenec001},~\eqref{laap01} and~\eqref{laap02}.

 Therefore,
$$  v_n (t):=\left\{\begin{array}{ll}
v_0^n, & \hbox{if $t=0$},\\[6pt]
v_i^n, & \hbox{if $t\in ]t_{i-1}^n,t_i^n]$, $i=1,\ldots,n$},
\end{array} \right.$$
    is an   $\epsilon$-approximate solution of Problem \eqref{sabore001bevolshort} as defined in nonlinear semigroup theory.
   Consequently, by nonlinear semigroup theory (see~\cite{Benilantesis}, \cite[Theorem 4.1]{BARBU2}, or~\cite[Theorem A.27]{ElLibro}) and on account of Theorem \ref{rangecondition02} and Theorem \ref{remdom01bevol} we have that Problem \eqref{sabore001bevolshort} has a unique mild solution   $v\in C([0,T];L^1(\Omega,\nu))$ with \begin{equation}\label{vepsilonconvergence}
v_n(t)\stackrel{n}{\longrightarrow} v(t) \ \hbox{ in $L^1(\Omega,\nu)$ uniformly for $t\in[0,T]$}.
\end{equation}
Uniqueness and the maximum principle   for mild solutions is guaranteed by the $T$-accretivity of the operator.

Let us now see that  $v$ is a strong solution of Problem~\eqref{sabore001bevolshort} when \eqref{conditiononjstar} holds.   Note that, since $v_0\in L^{p'}(\Omega,\nu)$, we may take $v_0^n=v_0$ for every $n\in\N$ in the previous computations and $f_i^n\in L^{p'}(\Omega,\nu)$, $i=1,\ldots n$, additionally satisfying
$$\sum_{i=1}^n\int_{t_{i-1}^n}^{t_i^n}\Vert f(t)- f_i^n\Vert^{p'}_{L^{p'}(\Omega,\nu)}dt \le \epsilon.
$$

Let us define
 $$  u_n (t)=
u_i^n  \  \hbox{ for $t\in ]t_{i-1}^n,t_i^n]$, $\, i=1,\ldots,n$}.
$$

Multiplying equation \eqref{solutiondiscretization2} by $u_i^n$ and integrating over $\Omega$ with respect to $\nu$ we obtain
\begin{equation}\label{soltimesuiintegrated}
\begin{array}{c}
\displaystyle\int_\Omega\frac{v_i^n(x)-v_{i-1}^n(x)}{T/n}u_i^n(x)d\nu(x)-\int_\Omega \int_\Omega
\a_p(x,y,u_i^n(y)-u_i^n(x))dm_x(y)u_i^n(x)d\nu(x)\\[14pt]
\displaystyle= \int_\Omega f_i^n(x)u_i^n(x) d\nu(x).
\end{array}
\end{equation}
Now, since $v_i^n(x)\in\gamma(u_i^n(x))$ for $\nu$-a.e. $x\in\Omega_1$ and $v_i^n(x)\in\beta(u_i^n(x))$ for $\nu$-a.e. $x\in\Omega_2$,
$$
\left\{\begin{array}{cc}
  u_i^n(x)\in\gamma^{-1}(v_i^n(x))=\partial j_\gamma^*(v_i^n(x))\, & \hbox{for $\nu$-a.e. $x\in\Omega_1$}, \\ [8pt]
    u_i^n(x)\in\beta^{-1}(v_i^n(x))=\partial j_\beta^*(v_i^n(x))\, & \hbox{for $\nu$-a.e. $x\in\Omega_2$}.
\end{array}\right.
$$
Consequently,
$$
\left\{\begin{array}{cc}
  j_\gamma^*(v_{i-1}^n(x))- j_\gamma^*(v_{i}^n(x))\ge (v_{i-1}^n(x)-v_i^n(x))u_i^n(x)\, & \hbox{for $\nu$-a.e. $x\in\Omega_1$}, \\ [8pt]
    j_\beta^*(v_{i-1}^n(x))- j_\beta^*(v_{i}^n(x))\ge (v_{i-1}^n(x)-v_i^n(x))u_i^n(x)\, & \hbox{for $\nu$-a.e. $x\in\Omega_2$}.
\end{array}\right.
$$
Therefore, from \eqref{soltimesuiintegrated} it follows that
$$
\begin{array}{l}
\displaystyle\frac{1}{T/n}\int_{\Omega_1}(j_\gamma^*(v_{i}^n(x))- j_\gamma^*(v_{i-1}^n(x)))d\nu(x)+
\frac{1}{T/n}\int_{\Omega_2}(j_\beta^*(v_{i}^n(x))- j_\beta^*(v_{i-1}^n(x)))d\nu(x) \\ [12pt]
\displaystyle\hspace{10pt} -\int_\Omega \int_\Omega
\a_p(x,y,u_i^n(y)-u_i^n(x))u_i^n(x)dm_x(y)d\nu(x)\\ [14pt]
\displaystyle \le \int_\Omega f_i^n(x)u_i^n(x) d\nu(x),
\end{array}
$$
$i=1,\ldots,n$. Then, integrating this equation over $]t_{i-1}^n,t_i^n]$ and adding for $1\le i \le n$ we get
$$
\begin{array}{l}
\displaystyle \int_{\Omega_1}(j_\gamma^*(v_{n}^n(x))- j_\gamma^*(v_{0}(x)))d\nu(x)+
 \int_{\Omega_2}(j_\beta^*(v_{n}^n(x))- j_\beta^*(v_{0}(x)))d\nu(x) \\ [12pt]
\displaystyle\hspace{10pt} -\sum_{i=1}^n\int_{t_{i-1}^n}^{t_i^n} \int_\Omega \int_\Omega
\a_p(x,y,u_i^n(y)-u_i^n(x))dm_x(y)u_i^n(x)d\nu(x)dt\\ [14pt]
\displaystyle \le \sum_{i=1}^n\int_{t_{i-1}^n}^{t_i^n}\int_\Omega f_i^n(x)u_i^n(x) d\nu(x)dt,
\end{array}
$$
which, recalling the definitions of $f_n$, $u_n$ and $v_n$, and integrating by parts, can be rewritten as
\begin{equation}\label{jstarinequality}
\begin{array}{l}
\displaystyle \int_{\Omega_1}(j_\gamma^*(v_{n}^n(x))- j_\gamma^*(v_{0}(x)))d\nu(x)+
 \int_{\Omega_2}(j_\beta^*(v_{n}^n(x))- j_\beta^*(v_{0}(x)))d\nu(x) \\ [12pt]
\displaystyle\hspace{10pt} +\frac{1}{2}\int_0^T\int_\Omega \int_\Omega
\a_p(x,y,u_n(t)(y)-u_n(t)(x))(u_n(t)(y)-u_n(t)(x))dm_x(y)d\nu(x)dt\\ [14pt]
\displaystyle \le \int_0^T\int_\Omega f_n(t)(x)u_n(t)(x) d\nu(x)dt.
\end{array}
\end{equation}
This, together with \eqref{llo2} and the fact that $j^*_\gamma$ and $j^*_\beta$ are nonnegative, yields
$$
\begin{array}{l}
\displaystyle \frac{c_p}{2}\int_0^T\int_\Omega \int_\Omega
|u_n(t)(y)-u_n(t)(x)|^p dm_x(y)d\nu(x)dt\\ [14pt]
\displaystyle \le \frac{1}{2}\int_0^T\int_\Omega \int_\Omega
\a_p(x,y,u_n(t)(y)-u_n(t)(x))(u_n(t)(y)-u_n(t)(x))dm_x(y)d\nu(x)dt\\ [14pt]
\displaystyle \le \int_{\Omega_1}( j_\gamma^*(v_{0}(x)))d\nu(x)+
 \int_{\Omega_2}( j_\beta^*(v_{0}(x)))d\nu(x) + \int_0^T\int_\Omega f_n(t)(x)u_n(t)(x) d\nu(x)dt\\[14pt]
 \displaystyle \le \int_{\Omega_1}( j_\gamma^*(v_{0}(x)))d\nu(x)+
 \int_{\Omega_2}( j_\beta^*(v_{0}(x)))d\nu(x) + \int_0^T \Vert f_n(t)\Vert_{L^{p'}(\Omega,\nu)} \Vert u_n(t) \Vert_{L^{p}(\Omega,\nu)} dt.
\end{array}
$$
Therefore, for any $\delta>0$, by \eqref{conditiononjstar} and Young's inequality, there exists $C(\delta)>0$ such that
\begin{equation}\label{bounddelta}
\displaystyle \int_0^T\int_\Omega \int_\Omega
|u_n(t)(y)-u_n(t)(x)|^p dm_x(y)d\nu(x)dt \le C(\delta)+ \delta \int_0^T \Vert u_n(t) \Vert_{L^{p}(\Omega,\nu)}^{p} dt.
\end{equation}

Now, by \eqref{vepsilonconvergence}, if $\mathcal{R}_{\gamma,\beta}^+=+\infty$, there exists $M>0$ and $n_0\in\N$ such that
$$\sup_{t\in[0,T]}\int_\Omega v_{n}^+(t)(x)d\nu(x)<M \quad \hbox{for every } n\ge n_0,$$
and, if $\mathcal{R}_{\gamma,\beta}^+<+\infty$, there exist $M\in\R$, $h>0$ and $n_0\in\N$ such that
$$\sup_{t\in[0,T]}\int_\Omega v_{n}(t)(x)d\nu(x)<M<\mathcal{R}_{\gamma,\beta}^+,$$
and
$$\sup_{t\in[0,T]}  \int_{\{x\in\Omega \, : \, v_{n}(t)(x)<-h\}}|v_{n}(t)(x)|d\nu(x)<\frac{\mathcal{R}_{\gamma,\beta}^+-M}{8} \quad \hbox{for every } n\ge n_0.$$
Consequently, Lemma \ref{LemaAcotLp} and Lemma \ref{LemaAcotLp2} yield
$$\Vert u_{n}^+(t)\Vert_{L^{p}(\Omega,\nu)}\le C_2\left(\left(\int_\Omega \int_\Omega
|u_{n}^+(t)(y)-u_{n}^+(t)(x)|^p dm_x(y)d\nu(x)\right)^\frac1p+1 \right) $$
for some constant $C_2>0$.
Similarly, we may find $C_3>0$ such that
$$\Vert u_{n}^-(t)\Vert_{L^{p}(\Omega,\nu)}\le C_3\left(\left(\int_\Omega \int_\Omega
|u_{n}^-(t)(y)-u_{n}^-(t)(x)|^p dm_x(y)d\nu(x)\right)^\frac1p+1 \right) .$$
Consequently, by \eqref{bounddelta}, choosing $\delta$ small enough, we deduce that $\{u_{n}\}_n$ is bounded in $L^p(0,T;L^p(\Omega,\nu))$. Therefore, there exists a subsequence, which we continue to denote by $\{u_{n}\}_n$, and $u\in L^p(0,T;L^p(\Omega,\nu))$ such that
$$u_{n}\stackrel{n}{\rightharpoonup} u \ \hbox{ weakly in } L^p(0,T;L^p(\Omega,\nu)).$$
 Then, since $\gamma$ and $\beta$ are maximal monotone graphs, we conclude that $v(t)(x)\in\gamma (u(t)(x))$ for $\mathcal{L}^1\otimes\nu$-a.e. $(t,x)\in (0,T)\times\Omega_1$   and $v(t)(x)\in\beta(u(t)(x))$   for $\mathcal{L}^1\otimes\nu$-a.e. $(t,x)\in (0,T)\times\Omega_2$.

Note that, since, by \eqref{bounddelta},
$$
\displaystyle \left\{\int_0^T\int_\Omega \int_\Omega
|u_{n}(t)(y)-u_{n}(t)(x)|^p dm_x(y)d\nu(x)dt\right\}_n \quad\hbox{is bounded},
$$
 then, by \eqref{llo1},  $\{[(t,x,y)\mapsto \a_p(x,y,u_{n}(t)(y)-u_{n}(t)(x))]\}_n$ is bounded in $L^{p'}(0,T; L^{p'}(\Omega\times \Omega,\nu\otimes m_x))$ so we may take a further subsequence, which we still denote in the same way, such that
$$[(t,x,y)\mapsto \a_p(x,y,u_{n}(t)(y)-u_{n}(t)(x))]\stackrel{n}{\rightharpoonup} \Phi, \ \hbox{ weakly in } L^{p'}(0,T; L^{p'}(\Omega\times \Omega,\nu\otimes m_x)).$$
Note that, for any $\xi\in L^p(\Omega,\nu)$, by the integrations by parts formula we know that
$$
  \begin{array}{l}
   \displaystyle -\int_\Omega\int_\Omega \a_p(x,y,u_{n}(t)(y)-u_{n}(t)(x))\xi(x) dm_x(y)d\nu(x)\\ [14pt]
  \displaystyle =\frac12\int_\Omega\int_\Omega \a_p(x,y,u_{n}(t)(y)-u_{n}(t)(x))(\xi(y)-\xi(x))dm_x(y)d\nu(x)
  \end{array}
$$
for $t\in [0,T]$, thus taking limits as $n\to\infty$ we have
\begin{equation}\label{integrationbypartsforphi}
-\int_\Omega\int_\Omega \Phi(t,x,y) \xi(x) dm_x(y)d\nu(x)= \frac12\int_\Omega\int_\Omega \Phi(t,x,y)(\xi(y)-\xi(x))dm_x(y)d\nu(x).
\end{equation}
Now, from \eqref{solutiondiscretization2} we have that
\begin{equation}\label{limittime}
\frac{v_{n}(t)(x)-v_{n}(t-{T/n})(x)}{T/n}-\int_\Omega
\a_p(x,y,u_{n}(t)(y)-u_{n}(t)(x))dm_x(y)= f_{n}(t)(x) \end{equation}
for $t\in[0,T]$ and $x\in\Omega$.
Let   $\Psi\in W^{1,1}_0(0,T;L^p(\Omega,\nu))$, $\hbox{supp}(\Psi)\subset\subset[0,T]$, then
$$\begin{array}{c}
\displaystyle\int_0^T \frac{v_{n}(t)(x)-v_{n}(t-{T/n})(x)}{T/n}\Psi(t)(x)dt\\[14pt]
\displaystyle=-\int_0^{T-T/n} v_{n}(t)(x)\frac{\Psi(t+T/n)(x)-\Psi(t)(x)}{T/n}dt+\int_{T-T/n}^T \frac{v_{n}\Psi(t)(x)}{T/n}dt-\int_0^{T/n} \frac{v_0\Psi(t)(x)}{T/n}dt
\end{array}$$
for $x\in\Omega$. Therefore, multiplying \eqref{limittime} (for the previously chosen subsequence) by $\Psi$, integrating over $(0,T)\times\Omega$ with respect to $\mathcal{L}^1\otimes\nu$ and taking limits  we get
\begin{equation}\label{weakderivative}
\begin{array}{c}
\displaystyle -\int_0^T \int_\Omega  v(t)(x)\frac{d}{dt}\Psi(t)(x)d\nu(x) dt-  \int_0^T\int_\Omega\int_\Omega
\Phi(t,x,y)dm_x(y)\Psi(t)(x)d\nu(x)dt \\ \\
\displaystyle = \int_0^T\int_\Omega f(t)(x)\Psi(t)(x)d\nu(x)dt.
\end{array}
\end{equation}
Therefore, taking $\Psi(t)(x)=\psi(t)\xi(x)$, where $\psi\in C_c^\infty(0,T)$ and $\xi\in L^p(\Omega,\nu)$, we obtain that
$$
\displaystyle \int_0^T  v(t)(x)\psi'(t)  dt=-\int_0^T \int_\Omega
\Phi(t,x,y)\psi(t)dm_x(y) dt
  - \int_0^T f(t)(x)\psi(t) dt, \, \hbox{ for $\nu$-a.e. $x\in\Omega$}.
$$
It follows that
$$
\displaystyle v'(t)(x)  =  \int_\Omega
\Phi(t,x,y)dm_x(y)     +   f(t)(x) \quad\hbox{for a.e. $t\in (0,T)$ and $\nu$-a.e. $x\in\Omega$}.
$$
Therefore, since $v\in C([0,T];L^1(\Omega,\nu))$, $\Phi\in L^{p'}(0,T; L^{p'}(\Omega\times \Omega,\nu\otimes m_x))$ and $f\in L^{p'}(0,T;L^{p'}(\Omega,\nu))$, we have  $v'\in  L^{p'}(0,T;L^{p'}(\Omega,\nu))$ and $v\in W^{1,1}(0,T;L^{1}(\Omega,\nu))$.

Hence, to conclude it remains to prove that
$$ \int_\Omega \Phi(t,x,y)dm_x(y)=\int_\Omega \a_p(x,y,u(t)(y)-u(t)(x))dm_x(y)$$
for $\mathcal{L}^1\otimes\nu$-a.e. $(t,x)\in [0,T]\times\Omega$.
  To this aim we make use of the following claim that will be proved later on:
\begin{equation}\label{finalclaim}
\begin{array}{l}
\displaystyle\limsup_n \int_0^T \int_{\Omega}\int_{\Omega} \a_p(x,y,u_{n}(t)(y)-u_{n}(t)(x))(u_{n}(t)(y)-u_{n}(t)(x)))dm_x(y)d\nu(x)dt\\[14pt]
\displaystyle \le \int_0^T \int_{\Omega}\int_{\Omega} \Phi(t,x,y)(u(t)(y)-u(t)(x)))dm_x(y)d\nu(x)dt.
\end{array}\end{equation}
Now, let $\rho\in L^p(0,T;L^p(\Omega,\nu))$. By \eqref{llo3} we have
$$
\begin{array}{l}
 \displaystyle \int_0^T \int_\Omega\int_\Omega \a_p(x,y,\rho(t)(y)-\rho(t)(x)) \\[12pt]  \hspace{80pt} \displaystyle\times(u_{n}(t)(y)-\rho(t)(y)-(u_{n}(t)(x)-\rho(t)(x)))dm_x(y)d\nu(x)dt\\[14pt]
 \displaystyle \le\int_0^T \int_\Omega\int_\Omega \a_p(x,y,u_{n}(t)(y)-u_{n}(t)(x))\\[12pt]
 \displaystyle \hspace{80pt} \times(u_{n}(t)(y)-\rho(t)(y)-(u_{n}(t)(x)-\rho(t)(x)))dm_x(y)d\nu(x)dt.
  \end{array}
$$
Thus, taking limits as $n\to\infty$ and using \eqref{finalclaim}, we obtain
$$
\begin{array}{l}
 \displaystyle \int_0^T \int_\Omega\int_\Omega \a_p(x,y,\rho(t)(y)-\rho(t)(x)) \\[12pt]  \hspace{80pt} \displaystyle\times(u(t)(y)-\rho(t)(y)-(u(t)(x)-\rho(t)(x)))dm_x(y)d\nu(x)dt\\[14pt]
 \displaystyle \le\int_0^T \int_\Omega\int_\Omega \Phi(t,x,y)(u(t)(y)-\rho(t)(y)-(u(t)(x)-\rho(t)(x)))dm_x(y)d\nu(x)dt,
  \end{array}
$$
which, integrating by parts and recalling \eqref{integrationbypartsforphi}, becomes
$$
\begin{array}{l}
 \displaystyle \int_0^T \int_\Omega\int_\Omega \a_p(x,y,\rho(t)(y)-\rho(t)(x))dm_x(y)(u(t)(x)-\rho(t)(x))d\nu(x)dt\\[14pt]
 \displaystyle \ge\int_0^T \int_\Omega\int_\Omega \Phi(t,x,y)dm_x(y)(u(t)(x)-\rho(t)(x))d\nu(x)dt.
  \end{array}
$$
To conclude, take $\rho=u\pm \lambda\xi$ for $\lambda>0$ and $\xi\in L^p(0,T;L^p(\Omega,\nu))$ to get
$$
\begin{array}{l}
 \displaystyle \int_0^T \int_\Omega\int_\Omega \a_p(x,y,(u\pm \lambda\xi)(t)(y)-(u\pm \lambda\xi)(t)(x))dm_x(y)\xi(t)(x)d\nu(x)dt\\[14pt]
 \displaystyle \ge\int_0^T \int_\Omega\int_\Omega \Phi(t,x,y)dm_x(y)\xi(t)(x)d\nu(x)dt
  \end{array}
$$
which, letting $\lambda\to 0 $ yields
$$
\begin{array}{l}
 \displaystyle \int_0^T \int_\Omega\int_\Omega \a_p(x,y,u(t)(y)-u(t)(x))dm_x(y)\xi(t)(x)d\nu(x)dt\\[14pt]
 \displaystyle =\int_0^T \int_\Omega\int_\Omega \Phi(t,x,y)dm_x(y)\xi(t)(x)d\nu(x)dt
  \end{array}
$$
for any $\xi\in L^p(0,T;L^p(\Omega,\nu))$. Therefore,
$$\int_\Omega \a_p(x,y,u(t)(y)-u(t)(x))dm_x(y)=\int_\Omega \Phi(t,x,y)dm_x(y) $$
for $\mathcal{L}^1\otimes\nu$-a.e. $(t,x)\in [0,T]\times\Omega$.

Let us prove claim \eqref{finalclaim}. By \eqref{jstarinequality} and Fatou's lemma, we have
\begin{equation}\label{Fatouapplication}
\begin{array}{l}
\displaystyle\limsup_n\frac{1}{2}\int_0^T\int_\Omega \int_\Omega
\a_p(x,y,u_{n}(t)(y)-u_{n}(t)(x))(u_{n}(t)(y)-u_{n}(t)(x))dm_x(y)d\nu(x)dt\\[14pt]
\displaystyle \le -\int_{\Omega_1}(j_\gamma^*(v(T)(x))- j_\gamma^*(v(0)(x)))d\nu(x)
 -\int_{\Omega_2}(j_\beta^*(v(T)(x))- j_\beta^*(v(0)(x)))d\nu(x)\\[12pt]
 \ \ \displaystyle +\int_0^T\int_\Omega f(t)(x)u(t)(x) d\nu(x)dt.
\end{array}
\end{equation}
Moreover, by \eqref{weakderivative},
\begin{equation}\label{DefF}
\int_0^T  v(t)(x)\frac{d}{dt}\Psi(t)(x)dt=\int_0^T F(t)(x)\Psi(t)(x)dt, \, \hbox{ for $\nu$-a.e. $x\in\Omega$,}
\end{equation}
where $F$ is given by
\begin{equation}\label{definitionF}
F(t)(x)=-\int_\Omega \Phi(t,x,y)dm_x(y)-f(t)(x), \ x\in\Omega .
\end{equation}
Let $\psi\in   C_c^\infty(0,T)$, $\psi\ge 0$, $\tau>0$ and
$$\eta_\tau (t)(x)=\frac{1}{\tau}\int_t^{t+\tau} u(s)(x) \psi(s)ds, \ \ t\in[0,T], \, x\in\Omega.$$
Then, for $\tau$ small enough,  $\eta_\tau\in W^{1,1}_0(0,T;L^p(\Omega,\nu))$ and we may use it as a test function in \eqref{DefF} to obtain
$$
\begin{array}{rl}
\displaystyle\int_0^T F(t)(x)\eta_\tau(t)(x) dt&\displaystyle=\int_0^T v(t)(x)\frac{d}{dt}\eta_\tau(t)(x)\\[14pt]
&\displaystyle= \int_0^T v(t)(x)\frac{u(t+\tau)(x)\psi(t+\tau)-u(t)(x)\psi(t)}{\tau} dt \\[14pt]
&\displaystyle=\int_0^T \frac{v(t-\tau)(x)-v(t)(x)}{\tau} u(t)(x)\psi(t) dt.
\end{array}$$
Now,
$$\gamma^{-1}(r)= \partial j_{\gamma^{-1}}(r)=\partial\left(\int_0^r (\gamma^{-1})^0(s) ds\right),$$
thus, for $v,\hat v \in \gamma(u)$,
$$(\hat v -v)u\le\int_{v}^{\hat v}(\gamma^{-1})^0(s)ds.$$
A similar fact holds for $\beta$.
Then, for $\tau>0$ fixed, since $v(t)(x)\in\gamma (u(t)(x))$ for $\mathcal{L}^1\otimes\nu$-a.e. $(t,x)\in (0,T)\times\Omega_1$   and $v(t)(x)\in\beta(u(t)(x))$   for $\mathcal{L}^1\otimes\nu$-a.e. $(t,x)\in (0,T)\times\Omega_2$,
$$
\begin{array}{l}
\displaystyle\int_0^T \int_\Omega F(t)(x)\eta_\tau(t)(x)d\nu(x) dt \\[14pt]
\displaystyle\le \frac{1}{\tau}\int_0^T\int_{\Omega_1} \int_{v(t)(x)}^{v(t-\tau)(x)}(\gamma^{-1})^0(s)ds d\nu(x)\psi(t)dt+\frac{1}{\tau}\int_0^T\int_{\Omega_2} \int_{v(t)(x)}^{v(t-\tau)(x)}(\beta^{-1})^0(s)ds d\nu(x)\psi(t)dt\\[14pt]
\displaystyle =\int_0^T\int_{\Omega_1}\int_0^{v(t)(x)}(\gamma^{-1})^0(s)dsd\nu(x) \frac{\psi(t+\tau)-\psi(t)}{\tau}dt\\[12pt]
\displaystyle \hspace{10pt} +\int_0^T\int_{\Omega_2}\int_0^{v(t)(x)}(\beta^{-1})^0(s)dsd\nu(x) \frac{\psi(t+\tau)-\psi(t)}{\tau}dt.
\end{array}$$
Letting $\tau\to 0^+$ in the above expression, by the Dominated Convergence Theorem,
$$
\begin{array}{l}
\displaystyle\int_0^T \int_\Omega F(t)(x)u(t)(x)\psi(t)d\nu(x) dt \\[14pt]
\displaystyle\le\int_0^T\int_{\Omega_1}\int_0^{v(t)(x)}(\gamma^{-1})^0(s)ds d\nu(x)\psi'(t) dt+\int_0^T\int_{\Omega_2}\int_0^{v(t)(x)}(\beta^{-1})^0(s)ds d\nu(x)\psi'(t) dt\\[14pt]
\displaystyle=\int_0^T\int_{\Omega_1} j_{\gamma^{-1}}(v(t)(x))d\nu(x) \psi'(t) dt+\int_0^T\int_{\Omega_2} j_{\beta^{-1}}(v(t)(x))d\nu(x) \psi'(t) dt\\[14pt]
\displaystyle = \int_0^T \int_{\Omega_1}j_{\gamma}^*(v(t)(x))d\nu(x) \psi'(t) dt+\int_0^T \int_{\Omega_2}j_{\beta}^*(v(t)(x))d\nu(x) \psi'(t) dt.
\end{array}$$
Taking
$$\tilde{\eta}_\tau (t)(x)=\frac{1}{\tau}\int_t^{t+\tau} u(s-\tau)(x) \psi(s)ds$$
yields the opposite inequality so that, in fact,
$$
\begin{array}{l}
\displaystyle \int_0^T\int_\Omega F(t)(x)u(t)(x)d\nu(x)\psi(t) dt \displaystyle \\[14pt]
\displaystyle= \int_0^T \int_{\Omega_1}j_{\gamma}^*(v(t)(x))d\nu(x) \psi'(t) dt+\int_0^T \int_{\Omega_2}j_{\beta}^*(v(t)(x))d\nu(x) \psi'(t) dt.
\end{array}
$$
Then,
\begin{equation}\label{W11jgamma}
  -\frac{d}{dt}\left(\int_{\Omega_1}j_{\gamma}^*(v(t)(x))d\nu(x)+\int_{\Omega_2}j_{\beta}^*(v(t)(x))d\nu(x)\right)=\int_\Omega F(t)(x)u(t)(x)d\nu(x)
\end{equation}
in $\mathcal{D}'(]0,T[)$, thus, in particular,
$$\int_{\Omega_1}j_{\gamma}^*(v(t)(x))d\nu(x)+\int_{\Omega_2}j_{\beta}^*(v(t)(x))d\nu(x)\in W^{1,1}(0,T). $$
Therefore, integrating from $0$ to $T$ in \eqref{W11jgamma} and recalling \eqref{definitionF} we get
$$\begin{array}{l}
\displaystyle \int_0^T\int_\Omega \int_\Omega
\Phi(t,x,y)u(t)(x)dm_x(y)d\nu(x)dt\\[14pt]
\displaystyle = -\int_{\Omega_1}(j_\gamma^*(v(T)(x))- j_\gamma^*(v(0)(x)))d\nu(x)
 -\int_{\Omega_2}(j_\beta^*(v(T)(x))- j_\beta^*(v(0)(x)))d\nu(x)\\[12pt]
 \ \ \displaystyle +\int_0^T\int_\Omega f(t)(x)u(t)(x) d\nu(x)dt
\end{array}$$
which, together with \eqref{Fatouapplication}, yields the claim \eqref{finalclaim}. \qed
\end{proof}

 Observe that we have imposed the compatibility condition~\eqref{loquenec001} because, for a strong solution,
$$\int_\Omega v_0d\nu+ \int_0^t\int_\Omega fd\nu ds=\int_\Omega v(t)d\nu, \hbox{ for } t\in[0,T].$$

\begin{example}\label{exDtoN} Let $W\subset X$ be a measurable set such that $W_m$ is $m$-connected. Given $f\in L^1(\partial_mW,\nu)$, we say that a function $u\in L^1(W_m,\nu)$ is an $\a_p$-lifting of $f$ to $W_m=W\cup\partial_mW$ if
 $$
\left\{ \begin{array}{ll}  -\hbox{div}_m\a_p u(x) = 0, &  x\in W, \\ [8pt]
u(x)=f(x), & x\in\partial_m W. \end{array} \right.
$$
We define the Dirichlet-to-Neumann operator $\mathfrak{D}_{\a_p}\subset L^1(\partial_mW,\nu)\times L^1(\partial_mW ,\nu)$ as follows:
$(f,\psi)\in \mathfrak{D}_{\a_p}$ if
$$\mathcal{N}^{\a_p}_1  u(x)=\psi(x),\quad x\in \partial_mW,$$
where $u$ is an $\a_p$-lifting of $f$ to $W_m$.

Then, rewriting the operator $\mathfrak{D}_{\a_p}$ as $\mathcal{B}^{m,\gamma,\beta}_{\a_p}$ for $\gamma(r)=0$ and $\beta(r)=r$, $r\in\R$, ($\Omega_1=W$ and $\Omega_2=\partial_mW$), by the results in this subsection we have that $\mathfrak{D}_{\a_p}$ is $T$-accretive in $L^1(\partial_mW,\nu)$ (it is easy to see that, in fact, in this situation, it is completely accretive), it satisfies the range condition
$$L^{p'}(\partial_mW,\nu)\subset R(I+\mathfrak{D}_{\a_p}),$$ and it has dense domain.
 The non-homogeneous Cauchy evolution problem for this nonlocal Dirichlet-to-Neumann operator is a particular case of Problem~\eqref{sabore001bevolshort}:
$$
\left\{ \begin{array}{ll}
\displaystyle    -   div_m\a_p(u)(x)=0,    &x\in  W,\ 0<t<T,
\\ \\
\displaystyle u_t(t,x) + \mathcal{N}^{\a_p}_1  u(t,x)=g(t,x) ,   &x\in\partial_mW, \  0<t<T,
 \\ \\w(0,x) = w_0(x),    &x\in \partial_mW. \end{array} \right.
$$
See, for example,  \cite{ammar1}, \cite{ammar2}, \cite{Chilletal},  \cite{Dhauer}, \cite{Sauter} and the references therein, for  other  evolution problems with  $p$-Dirichlet-to-Neumann operators, see~\cite{julioetal} for the problem with convolution kernels.
\end{example}

\subsection{Nonlinear boundary conditions}

In this subsection our aim is to study the following diffusion problem
$$
  \left(DP_{f,v_0}^{ \a_p,\gamma,\beta}\right) \quad\left\{ \begin{array}{ll} \displaystyle v_t(t,x) -   \int_{\Omega} \a_p(x,y,u(t,y)-u(t,x)) dm_x(y)=f(t,x),    &x\in  \Omega_1,\ 0<t<T,
\\ \\ \displaystyle v(t,x)\in\gamma\big(u(t,x)\big), &
    x\in  \Omega_1,\ 0<t<T,
\\ \\ \displaystyle    \int_{\Omega} \a_p(x,y,u(t,y)-u(t,x)) dm_x(y) \in\beta\big(u(t,x)\big),    &x\in \Omega_2, \  0<t<T, \\ \\ v(0,x) = v_0(x),    &x\in \Omega_1, \end{array} \right.
$$
that in particular covers Problem~\eqref{sabore001particularintro01}. See~\cite{BCrS} for the reference local model.

We  assume that $$\varGamma^-<\varGamma^+$$ since, otherwise, we do not have an evolution problem. Hence, $\mathcal{R}_{\gamma,\beta}^-<\mathcal{R}_{\gamma,\beta}^+.$ Moreover, we  also assume that
$$\mathfrak{B}^-<\mathfrak{B}^+,$$
since the case $ \mathfrak{B}^-=\mathfrak{B}^+$ ($\beta=\R\times\{0\}$) is treated with more generality in Subsection~\ref{subsect2}.

 We will again make use of nonlinear semigroup theory. To this end we  introduce the corresponding operator  associated to $\left(DP_{f,v_0}^{ \a_p,\gamma,\beta}\right)$, which is now  defined in $L^1(\Omega_1,\nu)$.

\begin{definition}   {\rm  We say that   $(v,\hat v) \in B^{m,\gamma,\beta}_{\a_p}$ if $ v,\hat v \in L^1(\Omega_1,\nu)$
  and there exist  $ u\in L^p(\Omega,\nu)$  and $w\in L^1(\Omega_2,\nu)$  with
  $$u\in \hbox{Dom}(\gamma)  \hbox{ and } v\in\gamma(u) \ \hbox{ $\nu$-a.e. in }\Omega_1,$$
and  $$u\in \hbox{Dom}(\beta)  \hbox{ and } w\in\beta(u) \ \hbox{ $\nu$-a.e. in }\Omega_2,$$
  such that
$$
(x,y)\mapsto a_p(x,y,u(y)-u(x))\in L^{p'}(\Omega\times\Omega,\nu\otimes m_x)
$$
and
$$
\left\{ \begin{array}{ll} \displaystyle -   \int_{\Omega} \a_p(x,y,u(y)-u(x)) dm_x(y) = \hat v \quad &\hbox{in} \ \ \Omega_1, \\ [12pt]
\displaystyle
 w-   \int_{\Omega} \a_p(x,y,u(y)-u(x)) dm_x(y)= 0 \quad &\hbox{in} \ \ \Omega_2; \end{array} \right.
$$
that is, $[u,(v,w)]$ is a solution of $(GP_{(v+\hat v,\mathbf{0})})$, where $\mathbf{0}$ is the null function in $\Omega_2$ (see~\eqref{02091131general} and Definition~\ref{defsol01}).
}
\end{definition}

Set
$$\begin{array}{c}
\mathcal{R}_{\gamma,\lambda\beta}^-:=\nu(\Omega_1)\varGamma^- + \lambda\nu(\Omega_2) \mathfrak{B}^-,
\\[6pt]
 \mathcal{R}_{\gamma,\lambda\beta}^+:=\nu(\Omega_1)\varGamma^+ + \lambda\nu(\Omega_2)\mathfrak{B}^+.
 \end{array}
$$
On account of the results given in Subsection~\ref{efzly} (Theorems \ref{maxandcont01} and \ref{existenceeli01})   we have:

  \begin{theorem}\label{t32}    The  operator $B^{m,\gamma,\beta}_{\a_p}$ is   $T$-accretive in $L^1(\Omega,\nu)$ and satisfies the range condition
$$\left\{\varphi\in L^{p'}(\Omega_1,\nu):\mathcal{R}_{\gamma,\lambda\beta}^-<\int_{\Omega_1}\varphi d\nu<\mathcal{R}_{\gamma,\lambda\beta}^+\right\}\subset R(I+ \lambda B^{m,\gamma,\beta}_{\a_p})\quad\hbox{for every } \lambda>0.
$$
\end{theorem}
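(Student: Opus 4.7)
The plan is to reduce both assertions to the stationary results in Subsection~\ref{efzly} by rewriting the inclusion $(v,\hat v)\in B^{m,\gamma,\beta}_{\a_p}$ combined with $v+\lambda\hat v=\varphi$ as a problem of type $(GP)$ with the Leray--Lions nonlinearity and the boundary graph suitably rescaled by $\lambda$. Indeed, unpacking the definition of $B^{m,\gamma,\beta}_{\a_p}$, the equation $v+\lambda\hat v=\varphi$ is equivalent to the existence of $u\in L^p(\Omega,\nu)$ and $w\in L^1(\Omega_2,\nu)$ with $v\in\gamma(u)$ on $\Omega_1$, $w\in\beta(u)$ on $\Omega_2$, and
\begin{equation*}
\left\{
\begin{array}{ll}
\displaystyle\gamma(u(x))-\int_\Omega\lambda\,\a_p(x,y,u(y)-u(x))\,dm_x(y)\ni\varphi(x), & x\in\Omega_1,\\[10pt]
\displaystyle\lambda\beta(u(x))-\int_\Omega\lambda\,\a_p(x,y,u(y)-u(x))\,dm_x(y)\ni 0, & x\in\Omega_2,
\end{array}
\right.
\end{equation*}
that is, $[u,(v,\lambda w)]$ is a solution of $(GP_{\tilde\varphi}^{\lambda\a_p,\gamma,\lambda\beta})$ with $\tilde\varphi:=\varphi\,\chi_{\Omega_1}\in L^{p'}(\Omega,\nu)$. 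Note that $\lambda\a_p$ satisfies~\eqref{ll002}--\eqref{llo2} with constants $(\lambda c_p,\lambda C_p)$, $\lambda\beta$ is a maximal monotone graph with $0\in\lambda\beta(0)$, and Assumption~\ref{assumption2} for the pair $(\gamma,\lambda\beta)$ reads $\mathcal{R}^-_{\gamma,\lambda\beta}<\mathcal{R}^+_{\gamma,\lambda\beta}$, which holds because $\varGamma^-<\varGamma^+$ and $\mathfrak B^-<\mathfrak B^+$.

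For the range condition I would then apply Theorem~\ref{existenceeli01} to $(GP_{\tilde\varphi}^{\lambda\a_p,\gamma,\lambda\beta})$. Its compatibility condition reads
$$\mathcal{R}^-_{\gamma,\lambda\beta}<\int_\Omega\tilde\varphi\,d\nu=\int_{\Omega_1}\varphi\,d\nu<\mathcal{R}^+_{\gamma,\lambda\beta},$$
which is precisely the hypothesis of the theorem; the solution obtained is then repackaged via the equivalence above into the sought pair $(v,\hat v)\in B^{m,\gamma,\beta}_{\a_p}$ with $v+\lambda\hat v=\varphi$.

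For $T$-accretivity I would take $(v_i,\hat v_i)\in B^{m,\gamma,\beta}_{\a_p}$, $i=1,2$, set $\varphi_i:=v_i+\lambda\hat v_i\in L^1(\Omega_1,\nu)$, and use the above rewriting to view $[u_i,(v_i,\lambda w_i)]$ as solutions of $(GP_{\tilde\varphi_i}^{\lambda\a_p,\gamma,\lambda\beta})$. Estimate~\eqref{mx01} from Theorem~\ref{maxandcont01} then gives
$$\int_{\Omega_1}(v_1-v_2)^+\,d\nu+\lambda\int_{\Omega_2}(w_1-w_2)^+\,d\nu\le\int_{\Omega_1}(\varphi_1-\varphi_2)^+\,d\nu,$$
and discarding the nonnegative boundary term produces
$$\int_{\Omega_1}(v_1-v_2)^+\,d\nu\le\int_{\Omega_1}\bigl((v_1-v_2)+\lambda(\hat v_1-\hat v_2)\bigr)^+\,d\nu,$$
which is the $T$-accretivity of $B^{m,\gamma,\beta}_{\a_p}$ in $L^1(\Omega_1,\nu)$. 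The only conceptual point beyond invoking Subsection~\ref{efzly} is the rescaling $(\a_p,\beta)\mapsto(\lambda\a_p,\lambda\beta)$ that matches the mismatched factor $\lambda$ between the bulk equation on $\Omega_1$ and the boundary relation on $\Omega_2$, placing the system inside the symmetric $(GP)$ framework; once this is noticed, no genuine obstacle remains.
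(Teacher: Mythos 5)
Your proposal is correct and fills in precisely the details the paper leaves implicit: the resolvent equation $v+\lambda\hat v=\varphi$ is recast as the stationary problem $(GP_{\varphi\chi_{\Omega_1}}^{\lambda\a_p,\gamma,\lambda\beta})$ via the rescaling $(\a_p,\beta)\mapsto(\lambda\a_p,\lambda\beta)$, after which Theorem~\ref{existenceeli01} gives the range condition (with $\mathcal{R}^\pm_{\gamma,\lambda\beta}$ appearing naturally) and estimate~\eqref{mx01} from Theorem~\ref{maxandcont01} gives $T$-accretivity upon discarding the nonnegative $\Omega_2$ term. This is the same route the paper takes when it states the result as a consequence of Subsection~\ref{efzly}.
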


\begin{remark}\label{macre01}
Observe that, if $\mathcal{R}_{\gamma,\beta}^-=-\infty$ and $\mathcal{R}_{\gamma,\beta}^+=+\infty$, then the closure of $B^{m,\gamma,\beta}_{\a_p}$ is $m$-$T$-accretive in $L^1(\Omega_1,\nu)$.
\end{remark}

With respect to the domain of this operator we prove the following result.

\begin{theorem}\label{remdom01}
  $$\overline{D(B^{m,\gamma,\beta}_{\a_p})}^{L^{p'}(\Omega_1,\nu)}=\big\{v\in L^{p'}(\Omega_1,\nu)\, :\, \varGamma^-\le v\le \varGamma^+ \big\}.$$
Therefore, we also have
  $$\overline{D(B^{m,\gamma,\beta}_{\a_p})}^{L^{1}(\Omega_1,\nu)}=\big\{v\in L^{1}(\Omega_1,\nu)\, :\, \varGamma^-\le v\le \varGamma^+  \big\}.$$
\end{theorem}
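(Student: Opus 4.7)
The inclusion ``$\subset$'' is immediate: if $v\in D(B^{m,\gamma,\beta}_{\a_p})$, then $v(x)\in\gamma(u(x))$ for $\nu$-a.e.\ $x\in\Omega_1$, so $\varGamma^-\le v\le\varGamma^+$ in $\Omega_1$, and this constraint is preserved under $L^{p'}(\Omega_1,\nu)$-limits. Hence only the reverse inclusion requires work. For this, since the set on the right is a closed convex subset of $L^{p'}(\Omega_1,\nu)$, it suffices to approximate a dense subset of it, and one checks that the functions $v\in L^\infty(\Omega_1,\nu)$ admitting constants $m_1,M_1,\widetilde m_1,\widetilde M_1$ with $\varGamma^-<m_1<\widetilde m_1\le v\le \widetilde M_1<M_1<\varGamma^+$ are dense (split into cases depending on whether $\varGamma^\pm$ are finite or infinite, adapting Theorem \ref{remdom01bevol}).

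Fix such a $v$ and, for each $n\in\N$, solve the stationary problem $\bigl(GP^{\tfrac{1}{n}\a_p,\gamma,\beta}_{(v,\mathbf{0})}\bigr)$ given by
\[
\left\{\begin{array}{ll}
\displaystyle v_n(x)-\frac{1}{n}\int_\Omega \a_p(x,y,u_n(y)-u_n(x))dm_x(y)=v(x), & x\in\Omega_1,\\[10pt]
\displaystyle w_n(x)-\frac{1}{n}\int_\Omega \a_p(x,y,u_n(y)-u_n(x))dm_x(y)=0, & x\in\Omega_2,
\end{array}\right.
\]
with $v_n\in\gamma(u_n)$ in $\Omega_1$ and $w_n\in\beta(u_n)$ in $\Omega_2$. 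Using $0\in\gamma(0)\cap\beta(0)$ (so $\varGamma^-\le 0\le\varGamma^+$ and $\mathfrak{B}^-\le 0\le\mathfrak{B}^+$) together with $\widetilde m_1\le v\le\widetilde M_1$, the compatibility condition $\mathcal{R}_{\gamma,\beta}^-<\int_{\Omega_1}v\,d\nu<\mathcal{R}_{\gamma,\beta}^+$ is verified, and Theorem \ref{existenceeli01} produces such a triple $(u_n,v_n,w_n)$. By construction $v_n\in D(B^{m,\gamma,\beta}_{\a_p})$.

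The main technical step is to show that $\{u_n\}_n$ is bounded in $L^\infty(\Omega,\nu)$. This we do by building barriers, following the cases of Theorem \ref{remdom01bevol}. Pick $a_{m_1}\le 0\le a_{M_1}$ and $b_{\mathfrak{B}}\in D(\beta)$ with $m_1\in\gamma(a_{m_1})$, $M_1\in\gamma(a_{M_1})$, $0\in\beta(b_{\mathfrak{B}})$ (take $b_{\mathfrak{B}}=0$); set
\[
\widehat u:=a_{M_1}\1_{\Omega_1}+b_{\mathfrak{B}}\1_{\Omega_2},\qquad \widetilde u:=a_{m_1}\1_{\Omega_1}+b_{\mathfrak{B}}\1_{\Omega_2},
\]
together with the associated $\widehat v=M_1$, $\widetilde v=m_1$ on $\Omega_1$ and $\widehat w=\widetilde w=0$ on $\Omega_2$. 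The pairs $[\widehat u,(\widehat v,\widehat w)]$ and $[\widetilde u,(\widetilde v,\widetilde w)]$ are exact solutions of $\bigl(GP^{\tfrac{1}{n}\a_p,\gamma,\beta}_{(\widehat\varphi_n,\mathbf{0})}\bigr)$ and $\bigl(GP^{\tfrac{1}{n}\a_p,\gamma,\beta}_{(\widetilde\varphi_n,\mathbf{0})}\bigr)$ for explicit data $\widehat\varphi_n,\widetilde\varphi_n$; by \eqref{llo1} these data differ from constants $M_1,m_1$ by $O(1/n)$, so for $n$ large enough $\widetilde\varphi_n<v<\widehat\varphi_n$ on $\Omega_1$. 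The contraction/comparison principle (Theorem \ref{maxandcont01}) then yields $\widetilde u\le u_n\le\widehat u$ $\nu$-a.e.\ in $\Omega$, so $\|u_n\|_{L^\infty(\Omega,\nu)}$ is uniformly bounded. This is the hard step: when $\varGamma^+$ or $\mathfrak{B}^\pm$ are infinite, or when $\beta$ has bounded domain, one must choose the barriers with extra care, mimicking the case analysis at the end of the proof of Theorem \ref{remdom01bevol}.

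Once the $L^\infty$ bound on $u_n$ is in hand, the identity
\[
v_n(x)-v(x)=\frac{1}{n}\int_\Omega \a_p(x,y,u_n(y)-u_n(x))dm_x(y)
\]
combined with \eqref{llo1} gives
\[
|v_n(x)-v(x)|\le \frac{C_p}{n}\bigl(1+(2\|u_n\|_{L^\infty(\Omega,\nu)})^{p-1}\bigr),
\]
which tends to $0$ uniformly in $x$. In particular $v_n\to v$ in $L^{p'}(\Omega_1,\nu)$, proving that $v\in\overline{D(B^{m,\gamma,\beta}_{\a_p})}^{L^{p'}(\Omega_1,\nu)}$. The $L^1$-closure identity follows from the $L^{p'}$-one together with $\nu(\Omega_1)<+\infty$ and a standard truncation argument.
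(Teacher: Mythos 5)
Your outline is fine except at the crucial step: the barrier you construct does not produce a supersolution of the $B$-problem, so the comparison principle cannot be invoked as you do. The operator $B^{m,\gamma,\beta}_{\a_p}$ forces the data on $\Omega_2$ to be $\mathbf{0}$. With your choice $\widehat u=a_{M_1}\1_{\Omega_1}+0\cdot\1_{\Omega_2}$ (so $a_{M_1}\ge 0$, $\widehat w=0$), the supersolution inequality on $\Omega_2$ reads
\[
\widehat w(x)-\frac{1}{n}\int_\Omega\a_p\big(x,y,\widehat u(y)-\widehat u(x)\big)\,dm_x(y)\ \ge\ 0,\qquad x\in\Omega_2.
\]
But for $x\in\Omega_2$ one has $\widehat u(x)=0$, and $\widehat u(y)-\widehat u(x)=a_{M_1}\ge 0$ for $y\in\Omega_1$ while it is $0$ for $y\in\Omega_2$; by \eqref{llo2} the integral is therefore $\ge 0$, so the left side is $\le 0$. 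Thus $[\widehat u,(\widehat v,\widehat w)]$ is a \emph{sub}solution (not a supersolution) of $(GP_{(v,\mathbf{0})})$ on $\Omega_2$, and the inequality $\widetilde\varphi_n<v<\widehat\varphi_n$ that you verify only on $\Omega_1$ cannot be extended to $\Omega_2$. The transfer of the barrier argument from Theorem \ref{remdom01bevol} fails precisely because in that theorem the second data component is free, whereas here it is pinned to $\mathbf{0}$.

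The paper circumvents this in a way that is genuinely different from your plan. When $\sup D(\beta)=+\infty$, it takes $\widehat u\equiv a_M$ \emph{constant on all of $\Omega$}; then every $\a_p$ term vanishes, and the $\Omega_2$ supersolution inequality reduces to $N\ge 0$ for some $N\in\beta(a_M)$, which holds by monotonicity of $\beta$ since $a_M\ge 0$ and $0\in\beta(0)$. This only works because the constant barrier can be placed in $D(\beta)$. When $\sup D(\beta)=r_\beta<+\infty$ this may be impossible (the point of $\gamma^{-1}(M)$ can exceed $r_\beta$), and the paper switches to an entirely different argument: it compares with the solution of $\big(GP^{\frac{1}{n}\a_p,\gamma,\beta}_{(M,\mathbf{0})}\big)$, invokes Remark \ref{BONUS}(iii) to get $\widetilde v_n\ll M$ (hence $\widetilde v_n\le M$), and deduces the uniform bound on $u_n$ from $v_n\le M<\varGamma^+$ on $\Omega_1$ together with $u_n\le r_\beta$ automatically on $\Omega_2$. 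Your hand-waving remark that one needs "extra care when $\beta$ has bounded domain" actually conceals the fact that the barrier method breaks down and a different tool (the $\ll$-relation) is required. To repair your proof you would need to restrict the supersolution barrier to be constant across $\Omega$, handle the case $\sup D(\beta)<\infty$ separately via the $\ll$-estimate, and do the symmetric work for the subsolution.
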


\begin{proof}   It is obvious that
 $$\overline{D(B^{m,\gamma,\beta}_{\a_p})}^{L^{p'}(\Omega_1,\nu)}
 \subset\left\{v\in L^{p'}(\Omega_1,\nu)\,:\, \varGamma^-\le v\le \varGamma^+\ \hbox{$\nu$-\rm a.e. in }\Omega_1
 \right\}.$$
  For the other inclusion it is enough to see that
$$\left\{v\in L^{\infty}(\Omega_1,\nu)\,:\, \varGamma^-\le v\le \varGamma^+\ \hbox{$\nu$-\rm a.e. in }\Omega_1
 \right\}\subset\overline{D(B^{m,\gamma,\beta}_{\a_p})}^{L^{p'}(\Omega_1,\nu)}.
$$
We work on a case-by-case basis.

\noindent (A) Suppose that $\varGamma^-<0<\varGamma^+ $. It is enough to see that for any $v\in L^\infty(\Omega_1,\nu)$ such that there exist $m\in\R$, $\widetilde{m}<0$, $\widetilde{M}>0$, $M\in \R$ satisfying
$$\varGamma^-<m<\widetilde{m}< v<\widetilde{M}<M<\varGamma^+ \ \hbox{ $\nu$-\rm a.e. in } \Omega_1$$
it holds that $v\in \overline{D(B^{m,\gamma,\beta}_{\a_p})}^{L^{p'}(\Omega_1,\nu)}$.

   By the results in Subsection \ref{existencesection} we know that, for $n\in\N$, there exist $u_n\in L^p(\Omega,\nu)$, $v_n\in L^{p'}(\Omega_1,\nu)$ and $w_n\in L^{p'}(\Omega_2,\nu)$, such that $[u_n,(v_n,\frac{1}{n} w_n)]$ is a solution of  $\left(GP_{(v,\mathbf{0})}^{\frac{1}{n}\a_p,\gamma,\beta}\right)$, i.e.,
$v_n\in \gamma(u_n)$ $\nu$-a.e. in $\Omega_1$, $w_n\in \beta(u_n)$ $\nu$-a.e. in $\Omega_2$ and
$$
\left\{\begin{array}{ll}
\displaystyle v_n(x) -\frac1n\int_{\Omega} \a_p(x,y,u_n(y)-u_n(x)) dm_x(y) = v(x), &  \hbox{for } x\in\Omega_1,\\ [14pt]
\displaystyle  w_n(x) - \int_{\Omega} \a_p(x,y,u_n(y)-u_n(x)) dm_x(y) = 0, &  \hbox{for }  x\in\Omega_2.
\end{array}\right.
$$
In other words, $(v_n,n(v-v_n))\in B^{m,\gamma,\beta}_{\a_p}$ or, equivalently,
$$v_n:=\left(I+\frac{1}{n}B^{m,\gamma,\beta}_{\a_p}\right)^{-1}(v) \in  D(B^{m,\gamma,\beta}_{\a_p}).$$
Let us see that  $v_n\stackrel{n}{\longrightarrow} v$ in $L^{p'}(\Omega_1,\nu)$.

\noindent (A1) Suppose first that $\hbox{sup} D(\beta)=+\infty$. Take $a_M>0$ such that $M\in \gamma(a_M)$ and let $N\in\beta(a_M)$.   Let
$$\widehat{v}(x):=\left\{\begin{array}{cc}
             M, & x\in\Omega_1, \\
             N, & x\in\Omega_2,
           \end{array}\right.$$
   $$\widehat{u}(x):=a_M , \ x\in\Omega,$$
      and
      $$\varphi(x):=\left\{\begin{array}{cc}
             \displaystyle  M, & x\in\Omega_1, \\
              \displaystyle 0, & x\in\Omega_2.
           \end{array}\right.$$
Then, $[\widehat{u}, \widehat{v}]$ is a supersolution of $\left(GP^{\frac{1}{n}\a_p,\gamma,\beta}_{\varphi}\right)$ and $(v,\mathbf{0})\le \varphi$. Thus, by the maximum principle  (Theorem~\ref{maxandcont01}),
$$u_n\le \widehat{u}=a_M \, \hbox{ $\nu$-\rm a.e. in }\Omega\quad \hbox{for every } n\in\N.$$

\noindent (A2) Suppose now that $\hbox{sup}D(\beta)=r_\beta<+\infty$.   Again, by the results in Subsection \ref{existencesection} we know that, for $n\in\N$, there exist $\widetilde{u}_n\in L^p(\Omega,\nu)$, $\widetilde{v}_n\in L^{p'}(\Omega_1,\nu)$ and $\widetilde{w}_n\in L^{p'}(\Omega_2,\nu)$, such that $[\widetilde{u}_n,(\widetilde{v}_n,\frac{1}{n}\widetilde{w}_n)]$ is a solution of  $\left(GP_{(M,\mathbf{0})}^{\frac{1}{n}\a_p,\gamma,\beta}\right)$. Therefore, by the maximum principle (Theorem \ref{maxandcont01}),
$$v_n\le \widetilde{v}_n\quad\hbox{$\nu$-\rm a.e. in }\Omega_1.$$
Now, since $\widetilde{v}_n\ll M$ in $\Omega_1$  (recall Remark \ref{BONUS}{\it (iii)}), we have that $\widetilde{v}_n\le M$ and, consequently, also $v_n\le M$. Hence, since $M\le \widetilde{M}<\varGamma^+ $,
$$u_n\le \inf\big(\gamma^{-1}(\widetilde{M})\big) \, \hbox{ $\nu$-\rm a.e. in }\Omega_1,$$
but we also have
$$u_n\le r_\beta \, \hbox{ $\nu$-\rm a.e. in } \Omega_2\quad \hbox{for every } n\in\N.$$

\noindent (B) For $\varGamma^-<0=\varGamma^+$: let $\varGamma^-<m<\widetilde{m}< 0$,    and $v\in L^\infty(\Omega_1,\nu)$ be such that
$$\widetilde{m}\le v< 0.$$
As in the previous case, by the results in Subsection \ref{existencesection}, we know that, for $n\in\N$, there exist $u_n\in L^p(\Omega,\nu)$, $v_n\in L^{p'}(\Omega_1,\nu)$ and $w_n\in L^{p'}(\Omega_2,\nu)$, such that $[u_n,(v_n,\frac{1}{n} w_n)]$ is a solution of  $\left(GP_{(v,\mathbf{0})}^{\frac{1}{n}\a_p,\gamma,\beta}\right)$.
Then, since for the null function $\mathbf{0}$ in $\Omega$,  $[\mathbf{0},\mathbf{0}]$ is a solution of $\left(GP_\mathbf{0}^{\frac{1}{n}\a_p,\gamma,\beta}\right)$ and $v<0$, the maximum principle yields
$$u_n\le 0 \ \hbox{ $\nu$-\rm a.e. in }\Omega\quad \hbox{for every } n\in\N.$$

Therefore, in all the cases, $\{u_n\}_n$ is $L^\infty(\Omega,\nu)$-bounded from above. With a similar reasoning we obtain that, in any of these cases, $\{u_n\}_n$ is also $L^\infty(\Omega,\nu)$-bounded from below. Then, since
$$v_n(x)-v(x)=\frac1n\int_{\Omega} \a_p(x,y,u_n(y)-u_n(x)) dm_x(y)  \ \ \hbox{ in} \ \Omega_1,$$
we obtain that
$$v_n\stackrel{n}{\longrightarrow} v \ \hbox{ in  } L^{p'}(\Omega_1,\nu)$$
as desired. \qed
\end{proof}

The following theorem gives the existence and uniqueness of solutions of Problem $\left(DP_{f,v_0}^{ \a_p,\gamma,\beta}\right)$. Recall that $\varGamma^-<\varGamma^+$ and
$\mathfrak{B}^-<\mathfrak{B}^+$.

\begin{theorem}\label{elsegundo01}  Let  $T>0$.
Let
$v_0\in L^{1}(\Omega_1,\nu)$ and $f\in L^1(0,T;L^{1}(\Omega_1,\nu))$. Assume
 $$\varGamma^-\le v_0\le \varGamma^+ \hbox{ $\nu$-\rm a.e. in }\Omega_1,$$
 and
$$\mathcal{R}_{\gamma,\beta}^+=+\infty \  \hbox{ or  } \
\int_{\Omega_1}f(x,t)d\nu(x)\le \nu(\Omega_2)\mathfrak{B}^+\quad \hbox{for every } 0<t<T,
$$
and
$$\mathcal{R}_{\gamma,\beta}^-=-\infty \ \hbox{ or  } \
\displaystyle \int_{\Omega_1}f(x,t)d\nu(x)\ge \nu(\Omega_2)\mathfrak{B}^-\quad \hbox{for every } 0<t<T.
$$
\\
 Then,  there exists a unique mild-solution $v\in C([0,T];L^1(\Omega_1,\nu))$   of $\displaystyle\left(DP_{f,v_0}^{ \a_p,\gamma,\beta}\right)$.

 Let $v$ and $\widetilde v$ be the mild solutions of the problem with respective data $v_0,\ \widetilde v_0\in L^{1}(\Omega_1,\nu)$ and
   $f,\ \widetilde f\in L^1(0,T;L^{1}(\Omega_1,\nu))$. Then
   $$
   \begin{array}{rl}\displaystyle\int_{\Omega_1} \left(v(t,x)-\widetilde v(t,x)\right)^+d\nu(x)&\displaystyle\le
   \int_{\Omega_1} \left(v_0(x)-\widetilde v_0(x)\right)^+d\nu(x)\\[12pt]
   & \displaystyle \hspace{10pt} +\int_0^t\int_{\Omega_1}\left(f(s,x)-\widetilde f(s,x)\right)^+d\nu(x)ds \quad\hbox{for every } 0\le   t\le T.
   \end{array}$$

Under the additional assumptions
 \begin{equation}\label{newconditionjstar}
 \begin{array}{c}
 \displaystyle v_0\in L^{p'}(\Omega_1,\nu)  \hbox{ and } f\in L^{p'}(0,T;L^{p'}(\Omega_1,\nu)) \hbox{ with } \\[10pt]
\displaystyle\int_{\Omega_1}j_\gamma^*(v_0)d\nu<+\infty \hbox{ and }
\\[12pt]
\displaystyle
 \int_{\Omega_1}v_0^+d\nu+\int_0^T\int_{\Omega_1}f(s)^+d\nu dt< \nu(\Omega_1)\Gamma^+,
\\[12pt]
\displaystyle
 \int_{\Omega_1}v_0^-d\nu+\int_0^T\int_{\Omega_1}f(s)^-d\nu dt<-\nu(\Omega_1)\Gamma^-,
\end{array}
\end{equation}
    the mild solution $v$ belongs to $W^{1,1}(0,T;L^{1}(\Omega_1,\nu))$ and satisfies the equation
     $$
     \left\{\begin{array}{l}\partial_tv(t)+B^{m,\gamma,\beta}_{\a_p}v(t)\ni f(t)\quad\hbox{for a.e. }t\in(0,T),\\[6pt]
     v(0)=v_0,\end{array}\right.$$
     that is, $v$ is  a strong solution.
\end{theorem}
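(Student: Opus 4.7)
The plan is to mimic the proof of Theorem~\ref{nsth01bevol} by applying Crandall--Liggett nonlinear semigroup theory to the operator $B^{m,\gamma,\beta}_{\a_p}$ introduced in this subsection, using its $T$-accretivity and range condition (Theorem~\ref{t32}) together with the characterization of the closure of its domain (Theorem~\ref{remdom01}). First, given approximations $f_n$ of $f$ piecewise constant on the partition $t_i^n = iT/n$ and $v_0^n \to v_0$ in $L^1(\Omega_1,\nu)$ with $\Gamma^- \le v_0^n \le \Gamma^+$, we would solve inductively the time-discretized problems $v_i^n + (T/n) B^{m,\gamma,\beta}_{\a_p}(v_i^n) \ni (T/n) f_i^n + v_{i-1}^n$. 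The range condition requires checking that, for $\lambda = T/n$,
\[
\nu(\Omega_1)\Gamma^- + \tfrac{T}{n}\nu(\Omega_2)\mathfrak{B}^- < \int_{\Omega_1}\Bigl(\tfrac{T}{n} f_i^n + v_{i-1}^n\Bigr)d\nu < \nu(\Omega_1)\Gamma^+ + \tfrac{T}{n}\nu(\Omega_2)\mathfrak{B}^+.
\]
Since $\Gamma^- \le v_{i-1}^n \le \Gamma^+$ automatically, this reduces to controlling $\int_{\Omega_1} f_i^n\,d\nu$ with respect to $\nu(\Omega_2)\mathfrak{B}^\pm$, which is precisely what the two dichotomies in the hypotheses provide (taking $f_n$ close enough to $f$ so that strict inequalities survive).

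With the time-discrete solutions in hand, the piecewise-constant interpolants $v_n$ form a family of $T/n$-approximate solutions. The $T$-accretivity of $B^{m,\gamma,\beta}_{\a_p}$ and the density of the domain yield existence and uniqueness of a mild solution $v \in C([0,T];L^1(\Omega_1,\nu))$ via the standard convergence result of Crandall--Liggett--B\'enilan, and the contraction estimate (the stated $L^1$ comparison) is an immediate consequence of $T$-accretivity.

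For the strong-solution part under \eqref{newconditionjstar}, I would follow the $j_\gamma^*$-trick from the proof of Theorem~\ref{nsth01bevol}: take $v_0^n \equiv v_0$ and $f_n \to f$ in $L^{p'}$, multiply the time-discrete equation by $u_i^n$, and use the convexity inequality $j_\gamma^*(v_{i-1}^n) - j_\gamma^*(v_i^n) \ge (v_{i-1}^n - v_i^n)u_i^n$ on $\Omega_1$ (note that no $j_\beta^*$ term is needed here since $\Omega_2$ carries no time derivative). Together with the coercivity~\eqref{llo2} and the integration-by-parts formula, this yields a bound on $\int_0^T \int_\Omega \int_\Omega |u_n(y)-u_n(x)|^p dm_x(y)d\nu(x)dt$ up to a term $\delta \int_0^T \|u_n\|_{L^p(\Omega,\nu)}^p$. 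To absorb this term via the generalised Poincar\'e inequality (Assumption~\ref{assumption3}), one needs uniform control on $\int_\Omega u_n^\pm\,d\nu$; this is where the extra compatibility conditions on $\int v_0^\pm + \int_0^T \int f^\pm$ enter, guaranteeing, through Lemmas~\ref{LemaAcotLp}--\ref{LemaAcotLp2} (applied case-by-case as in the mixed-case existence proof of Theorem~\ref{existenceeli01}), that $\{u_n\}$ stays in a bounded subset of $L^p(0,T;L^p(\Omega,\nu))$.

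The last step is to pass to the limit, obtaining $u$ as a weak limit in $L^p(0,T;L^p(\Omega,\nu))$ and a weak limit $\Phi$ of $\a_p(x,y,u_n(y)-u_n(x))$ in $L^{p'}$, and to verify that $v(t)(x) \in \gamma(u(t)(x))$ a.e.\ on $\Omega_1$, $\tilde w(t)(x) \in \beta(u(t)(x))$ a.e.\ on $\Omega_2$ for some $\tilde w$, together with the identification $\int_\Omega \Phi(t,x,y)\,dm_x(y) = \int_\Omega \a_p(x,y,u(y)-u(x))\,dm_x(y)$. I expect the main obstacle to be this last identification, which I would handle by Minty's monotonicity argument exactly as in the proof of Theorem~\ref{nsth01bevol}: the crucial $\limsup$ inequality
\[
\limsup_n \int_0^T \int_\Omega\int_\Omega \a_p(x,y,u_n(y)-u_n(x))(u_n(y)-u_n(x))\,dm_x(y)d\nu(x)dt \le \int_0^T \int_\Omega\int_\Omega \Phi(u(y)-u(x))\,dm_x(y)d\nu(x)dt
\]
is obtained by combining a Fatou-type inequality coming from the $j_\gamma^*$ estimate with the chain-rule identity $-\tfrac{d}{dt}\int_{\Omega_1} j_\gamma^*(v(t))\,d\nu = \int_\Omega F(t)(x)u(t)(x)\,d\nu(x)$ proved via the difference-quotient test function $\eta_\tau$. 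One only needs to adapt this step to the present setting, where the $j_\beta^*$ contribution is absent and the role of the time derivative is carried solely by $v$ on $\Omega_1$, so $F$ is supported accordingly. Standard Minty-type manipulation with test functions $u \pm \lambda \xi$ then concludes.
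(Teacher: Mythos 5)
The construction of the mild solution and the $L^1$-contraction estimate are described correctly and follow the paper's argument: time discretization, verification of the range condition $\mathcal{R}^-_{\gamma,T\beta/n}<\int_{\Omega_1}(\tfrac Tn f_i^n+v_{i-1}^n)<\mathcal{R}^+_{\gamma,T\beta/n}$ at each step via the two dichotomy hypotheses, then Crandall--Liggett--B\'enilan.

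For the strong-solution part, however, your plan to ``handle it by Minty's monotonicity argument exactly as in the proof of Theorem~\ref{nsth01bevol}'' has a genuine gap that the paper itself flags when it says this proof ``differs, strongly at some points.'' In Theorem~\ref{nsth01bevol} the identification of $\Phi$ suffices because $v\in\gamma(u)$ on $\Omega_1$ and $v\in\beta(u)$ on $\Omega_2$ both follow from the weak--strong closure of those maximal monotone graphs: there $v_n\to v$ \emph{strongly} in $L^1$ on all of $\Omega$ (being the mild-solution interpolant) and $u_n\rightharpoonup u$ weakly. Here the mild solution lives only on $\Omega_1$, and the companion function $w_n=\int_\Omega \a_p(\cdot,y,u_n(y)-u_n(\cdot))\,dm_\cdot(y)$ on $\Omega_2$ converges only \emph{weakly} in $L^{p'}(\Omega_2)$, as does $u_n$. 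Weak--weak convergence is not enough to conclude $w\in\beta(u)$ on $\Omega_2$, and the Minty argument you invoke identifies $\int_\Omega\Phi\,dm_x$ with $\int_\Omega\a_p(\cdot,y,u(y)-u(\cdot))\,dm_\cdot(y)$ but says nothing about the inclusion $w\in\beta(u)$. Your proposal acknowledges that one must ``verify that $\tilde w\in\beta(u)$ a.e.\ on $\Omega_2$'' but then offers no mechanism.

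The paper's actual proof sidesteps Minty entirely through a two-part strategy. When $\mathcal{R}^{\pm}_{\gamma,\beta}=\pm\infty$ (Step~1), Remark~\ref{macre01} makes the closure of $B^{m,\gamma,\beta}_{\a_p}$ $m$-$T$-accretive, so once $v\in W^{1,1}(0,T;L^1(\Omega_1,\nu))$ is shown via the discrete test-function computation, the abstract result (B\'enilan, or Corollary A.34 of~\cite{ElLibro}) immediately upgrades the mild solution to a strong one, with the $\Omega_2$-inclusion delivered automatically by the definition of the operator; the $\eta_\tau$--difference-quotient machinery appears not as part of a Minty argument but only to establish $\int_{\Omega_1}j^*_\gamma(v)\in W^{1,1}(0,T)$ for later use. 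When one or both of $\mathcal{R}^\pm_{\gamma,\beta}$ is finite (Steps~2 and~3), $m$-accretivity fails, and the paper does not attempt the direct limit in $n$ at all: it replaces $\beta$ by truncated graphs $\beta^k$ (or $\widetilde\beta^k$) with full range so that Step~1 applies, uses the maximum principle (Theorem~\ref{maxandcont01}) to show the approximating $u_k,v_k$ are monotone in $k$, and then passes to the limit in $k$ by monotone convergence and Lemma~\ref{convergenciaap}, which yields \emph{strong} $L^{p'}$-convergence of $(v_k)_t$ and $w_k$ and hence $w\in\beta(u)$. Your proposal omits this case split and the $\beta^k$ truncation, which are the core ideas allowing the proof to go through when the range is bounded.
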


The proof of this result differs, strongly at some points, from the proof of Theorem \ref{nsth01bevol}.

\begin{proof}
We start by proving the existence of mild solutions.
For $n\in\N$, consider the partition $$t_0^n=0<t_1^n<\cdots <t_{n-1}^n<t_n^n=T$$ where $t_i^n:=iT/n$, $i=1,\ldots,n$. Given $\epsilon>0$,  since $\mathfrak{B}^-<\mathfrak{B}^+$, there exist    $n\in \mathbb{N}$, $v_0^n \in \overline{D(B^{m,\gamma,\beta}_{\a_p})}^{L^{p'}(\Omega_1,\nu)}$ (i.e., $v_0^n\in L^{p'}(\Omega_1,\nu)$ satisfying $\varGamma^-\le v_0^n\le \varGamma^+$) and $f_i^n \in L^{p'}(\Omega_1,\nu)$, $i=1,\ldots n$, such that $T/n\le \epsilon$,
$$\Vert v_0-v_0^n\Vert_{L^1(\Omega_1,\nu)} \le\epsilon,$$
  \begin{equation}\label{aproxfconfin}
  \sum_{i=1}^n\int_{t_{i-1}^n}^{t_i^n}\Vert f(t)- f_i^n\Vert_{L^1(\Omega_1,\nu)}dt \le \epsilon
  \end{equation}
  and
 $$\nu(\Omega_2)\mathfrak{B}^-<\int_{\Omega_1}f_i^nd\nu <\nu(\Omega_2)\mathfrak{B}^+.$$
Then, setting
$$f_n(t):=f_i^n, \ \hbox{ for $t\in]t_{i-1}^n,t_i^n]$, $i=1,\ldots, n$},$$
we have that
$$\int_0^T \Vert f(t)- f_n(t) \Vert_{L^{1}(\Omega_1,\nu)}dt \le \epsilon.$$

Using the results in Subsection \ref{existencesection},
 we   see that,  for $n$ large enough, we may recursively find a solution  $[u_i^n,(v_i^n,\frac{T}{n} w_i^n)]$ of $\displaystyle \left(GP^{\frac{T}{n}\a_p,\gamma,\frac{T}{n}\beta}_{\left(\frac{T}{n} f_i^n+v_{i-1}^n,\mathbf{0}\right)}\right)$, $i=1,\ldots,n$, so that
\begin{equation}\label{newsolutiondiscretization}
\left\{\begin{array}{ll}
\displaystyle v_i^n(x)-\frac{T}{n}\int_\Omega
\a_p(x,y,u_i^n(y)-u_i^n(x))dm_x(y)=\frac{T}{n} f_i^n(x)+v_{i-1}^n(x), & \displaystyle x\in\Omega_1\\ [12pt]
\displaystyle   w_i^n(x)- \int_\Omega
\a_p(x,y,u_i^n(y)-u_i^n(x))dm_x(y)=0, & \displaystyle x\in\Omega_2,
\end{array}\right.
\end{equation}
or, equivalently,
\begin{equation}\label{newsolutiondiscretization2}
\left\{\begin{array}{ll}
\displaystyle \frac{v_i^n(x)-v_{i-1}^n(x)}{T/n}-\int_\Omega
\a_p(x,y,u_i^n(y)-u_i^n(x))dm_x(y)= f_i^n(x), & \displaystyle x\in\Omega_1\\ [12pt]
\displaystyle   w_i^n(x)- \int_\Omega
\a_p(x,y,u_i^n(y)-u_i^n(x))dm_x(y)=0, & \displaystyle x\in\Omega_2,
\end{array}\right.
\end{equation}
with $v_i^n(x)\in\gamma(u_i^n(x))$ for $\nu$-a.e. $x\in\Omega_1$ and $w_i^n(x)\in\beta(u_i^n(x))$ for $\nu$-a.e. $x\in\Omega_2$, $i=1,\ldots,n$.   That is, we may find the  unique solution $v_i^n$ of the time discretization scheme associated with $\left(DP_{f,v_0}^{ \a_p,\gamma,\beta}\right)$.

To apply these results we must ensure that
$$
 \mathcal{R}_{\gamma,\frac{T}{n}\beta}^-<\int_{\Omega_1} \left( \frac{T}{n} f_i^n + v_{i-1}^n\right) d\nu<\mathcal{R}_{\gamma,\frac{T}{n}\beta}^+
$$
 holds for each step, but this holds true thanks to the choice of the $f_i^n$, $i=1,\ldots,n$.

Therefore,
$$ v_n (t):=\left\{\begin{array}{ll}
v_0^n, & \hbox{if $t=0$},\\[6pt]
v_i^n, & \hbox{if $t\in ]t_{i-1}^n,t_i^n]$, $i=1,\ldots,n$},
\end{array} \right.$$
is an $\epsilon$-approximate solution of Problem $\left(DP_{f,v_0}^{ \a_p,\gamma,\beta}\right)$.
Consequently, by nonlinear semigroup theory ((see~\cite{Benilantesis}, \cite[Theorem 4.1]{BARBU2}, or~\cite[Theorem A.27]{ElLibro})) and on account of Theorem~\ref{t32} and Theorem~\ref{remdom01} we have that $\left(DP_{f,v_0}^{ \a_p,\gamma,\beta}\right)$ has a unique mild solution $v\in C([0,T];L^1(\Omega_1,\nu))$ with \begin{equation}\label{newvepsilonconvergence}
v_n(t)\stackrel{n}{\longrightarrow} v(t) \ \hbox{ in $L^1(\Omega_1,\nu)$ uniformly for $t\in[0,T]$}.
\end{equation}
Uniqueness and the maximum principle   for mild solutions is guaranteed by the $T$-accretivity of the operator.

We now prove, step by step, that these mild solutions are strong solutions of Problem $\left(DP_{f,v_0}^{ \a_p,\gamma,\beta}\right)$ under the set of assumptions given in~\eqref{newconditionjstar}.

Let us define
 $$  u_n (t):=
u_i^n  \quad  \hbox{for $t\in ]t_{i-1}^n,t_i^n]$, $i=1,\ldots,n$},
$$
and
 $$  w_n (t):=
w_i^n  \quad  \hbox{for $t\in ]t_{i-1}^n,t_i^n]$, $i=1,\ldots,n$}.
$$

  {\it Step 1.} Suppose first that $\mathcal{R}_{\gamma,\beta}^-=-\infty$ and $\mathcal{R}_{\gamma,\beta}^+=+\infty$.

 In  the construction of the mild solution, we now take $v_0^n=v_0$ (since $v_0\in L^{p'}(\Omega_1,\nu)$) and the functions $f_i^n \in L^{p'}(\Omega_1,\nu)$, $i=1,\ldots n$, additionally satisfying
$$\sum_{i=1}^n\int_{t_{i-1}^n}^{t_i^n}\Vert f(t)- f_i^n\Vert^{p'}_{L^{p'}(\Omega_1,\nu)}dt\le \epsilon $$
and
$$ \nu(\Omega_2)\mathfrak{B}^-<\int_{\Omega_1}f_i^nd\nu <\nu(\Omega_2)\mathfrak{B}^+.$$

 Multiplying both equations in \eqref{newsolutiondiscretization2} by $u_i^n$, integrating with respect to $\nu$ over $\Omega_1$ and $\Omega_2$, respectively, and adding them, we obtain
$$
\begin{array}{l}
\displaystyle\int_{\Omega_1}\frac{v_i^n(x)-v_{i-1}^n(x)}{T/n}u_i^n(x)d\nu(x)+\int_{\Omega_2}w_i^n(x) u_i^n(x)d\nu(x)
\\[12pt]
\displaystyle \hspace{10pt} -\int_\Omega \int_\Omega
\a_p(x,y,u_i^n(y)-u_i^n(x))u_i^n(x)dm_x(y)d\nu(x)\\[14pt]
\displaystyle= \int_{\Omega_1} f_i^n(x)u_i^n(x) d\nu(x).
\end{array}
$$
Then, since $w_i^n(x)\in\beta(u_i^n(x))$ for $\nu$-a.e. $x\in\Omega_2$ the second term on the left hand side is nonnegative and integrating by parts the third term we get
\begin{equation}\label{newsoltimesuiintegrated}
\begin{array}{l}
\displaystyle\int_{\Omega_1}\frac{v_i^n(x)-v_{i-1}^n(x)}{T/n}u_i^n(x)d\nu(x)+\frac12 \int_\Omega \int_\Omega
\a_p(x,y,u_i^n(y)-u_i^n(x))(u_i^n(y)-u_i^n(x))dm_x(y)d\nu(x)
\\[14pt]
\displaystyle \le \int_{\Omega_1} f_i^n(x)u_i^n(x) d\nu(x).
\end{array}
\end{equation}
Now, since $v_i^n(x)\in\gamma(u_i^n(x))$ for $\nu$-a.e. $x\in\Omega_1$,
$$
  u_i^n(x)\in\gamma^{-1}(v_i^n(x))=\partial j_\gamma^*(v_i^n(x)) \ \hbox{ for $\nu$-a.e. $x\in\Omega_1$.}
$$
Consequently,
$$
  j_\gamma^*(v_{i-1}^n(x))- j_\gamma^*(v_{i}^n(x))\ge (v_{i-1}^n(x)-v_i^n(x))u_i^n(x) \ \hbox{ for $\nu$-a.e. $x\in\Omega_1$.}
$$
Therefore, from \eqref{newsoltimesuiintegrated} it follows that
$$
\begin{array}{l}
\displaystyle\frac{n}{T}\int_{\Omega_1}(j_\gamma^*(v_{i}^n(x))- j_\gamma^*(v_{i-1}^n(x)))d\nu(x)+\frac12 \int_\Omega \int_\Omega
\a_p(x,y,u_i^n(y)-u_i^n(x))(u_i^n(y)-u_i^n(x))dm_x(y)d\nu(x)\\[14pt]
\displaystyle\le \int_{\Omega_1} f_i^n(x)u_i^n(x) d\nu(x),
\end{array}
$$
$i=1,\ldots,n$. Then, integrating this equation over $]t_{i-1},t_i]$ and adding for $1\le i \le n$ we get
$$
\begin{array}{l}
\displaystyle \int_{\Omega_1}(j_\gamma^*(v_{n}^n(x))- j_\gamma^*(v_{0}(x)))d\nu(x)\\[12pt]
\displaystyle \hspace{10pt} +\frac12 \sum_{i=1}^n\int_{t_{i-1}}^{t_i}\int_\Omega \int_\Omega
\a_p(x,y,u_i^n(y)-u_i^n(x))(u_i^n(y)-u_i^n(x))dm_x(y)d\nu(x)dt\\[14pt]
\displaystyle\le \sum_{i=1}^n\int_{t_{i-1}}^{t_i}\int_{\Omega_1} f_i^n(x)u_i^n(x) d\nu(x)dt,
\end{array}
$$
which, recalling the definitions of $f_n$, $u_n$, $v_n$ and $w_n$, can be rewritten as
\begin{equation}\label{newjstarinequality}
\begin{array}{l}
\displaystyle \int_{\Omega_1}(j_\gamma^*(v_{n}^n(x))- j_\gamma^*(v_{0}(x)))d\nu(x)\\[12pt]
\displaystyle \hspace{10pt} +\frac12 \int_0^T\int_\Omega \int_\Omega
\a_p(x,y,u_n(t)(y)-u_n(t)(x))(u_n(t)(y)-u_n(t)(x))dm_x(y)d\nu(x)dt\\[14pt]
\displaystyle \le \int_0^T\int_{\Omega_1} f_n(t)(x)u_n(t)(x) d\nu(x)dt.
\end{array}
\end{equation}
This, together with \eqref{llo2} and the fact that $j^*_\gamma$ is nonnegative, yields
$$
\begin{array}{l}
\displaystyle \frac{c_p}{2}\int_0^T\int_\Omega \int_\Omega
|u_n(t)(y)-u_n(t)(x)|^p dm_x(y)d\nu(x)dt\\ [14pt]
\displaystyle \le \frac{1}{2}\int_0^T\int_\Omega \int_\Omega
\a_p(x,y,u_n(t)(y)-u_n(t)(x))(u_n(t)(y)-u_n(t)(x))dm_x(y)d\nu(x)dt\\ [14pt]
\displaystyle \le \int_{\Omega_1} j_\gamma^*(v_{0}(x))d\nu(x)+
  \int_0^T\int_{\Omega_1} f_n(t)(x)u_n(t)(x) d\nu(x)dt\\[14pt]
 \displaystyle \le \int_{\Omega_1} j_\gamma^*(v_{0}(x))d\nu(x)+ \int_0^T \Vert f_n(t)\Vert_{L^{p'}(\Omega_1,\nu)} \Vert u_n(t) \Vert_{L^{p}(\Omega_1,\nu)} dt.
\end{array}
$$
Therefore, for any $\delta>0$, by \eqref{newconditionjstar} and Young's inequality, there exists $C(\delta)>0$ such that, in particular,
\begin{equation}\label{newbounddelta}
\displaystyle \int_0^T\int_{\Omega}\int_{\Omega}
|u_n(t)(y)-u_n(t)(x)|^p dm_x(y)d\nu(x)dt \le C(\delta)+ \delta \int_0^T \Vert u_n(t) \Vert_{L^{p}(\Omega_1,\nu)}^{p} dt.
\end{equation}

Observe also that, for any $n\in\N$ and $i\in\{1,\ldots,n\}$, and for $t\in ]t_{i-1}^n,t_i^n]$,
\begin{equation}\label{dsv001}\int_{\Omega_1}v_n^+(t) d\nu+\int_0^{t_i^n}\int_{\Omega_2}w_n^+(s) d\nu ds\le\int_{\Omega_1}v_{0}^+ d\nu+ \int_0^{t_i^n}\int_{\Omega_1}f_n^+(s) d\nu ds.
\end{equation}
 Indeed, multiplying the first equation in \eqref{newsolutiondiscretization} by $\frac{1}{r}T_r^+(u_i^n)$ and integrating with respect to $\nu$ over $\Omega_1$, then multiplying the second by  $\frac{T}{n}\frac{1}{r}T_r^+(u_i^n)$ and integrating with respect to $\nu$ over $\Omega_2$, adding both equations, neglecting the nonnegative term involving $\a_p$ (recall Remark \ref{remmon}) and letting $r\downarrow 0$, we get that
$$\int_{\Omega_1}(v_i^n)^+ d\nu+\frac{T}{n}\int_{\Omega_2}(w_i^n)^+ d\nu\le \int_{\Omega_1}(v_{i-1}^n)^+ d\nu+ \frac{T}{n}\int_{\Omega_1}(f_i^n)^+ d\nu, $$
i.e.,
$$\int_{\Omega_1}(v_i^n)^+ d\nu\le \int_{\Omega_1}(v_{i-1}^n)^+ d\nu+ \frac{T}{n}\int_{\Omega_1}(f_i^n)^+ d\nu -\frac{T}{n}\int_{\Omega_2}(w_i^n)^+ d\nu.$$
Therefore,
$$\int_{\Omega_1}(v_i^n)^+ d\nu\le \int_{\Omega_1}(v_{0}^n)^+ d\nu+ \sum_{j=1}^i\frac{T}{n}\int_{\Omega_1}(f_j^n)^+ d\nu -\sum_{j=1}^i\frac{T}{n}\int_{\Omega_2}(w_j^n)^+ d\nu$$
which is equivalent to \eqref{dsv001}.

   Now, by \eqref{newvepsilonconvergence}, if $\varGamma^+=+\infty$, there exists $M>0$ such that
$$\sup_{t\in[0,T]}\int_{\Omega_1} v_n^+(t)(x)d\nu(x)<M \ \hbox{ for every } n\in\N.$$
Consequently, Lemma \ref{LemaAcotLp} applied for $A=\Omega_1$, $B=\emptyset$ and $\alpha=\gamma$, yields
$$\Vert u_n^+(t)\Vert_{L^{p}(\Omega_1,\nu)}\le   K_2\left(\left(\int_{\Omega_1} \int_{\Omega_1}
|u_n^+(t)(y)-u_n^+(t)(x)|^p dm_x(y)d\nu(x)\right)^\frac1p+1 \right)  $$
for every $n\in\N$, every $0\le t \le T$ and some constant $ K_2>0$.

   Suppose now that $\varGamma^+<+\infty$.
 Then, by \eqref{dsv001} we have that, for any $n\in\N$ and $i\in\{1,\ldots,n\}$, and for $t\in ]t_{i-1}^n,t_i^n]$ if $i\ge 2$, or $t\in [t_{0}^n,t_1^n]$ if $i=1$,
   $$\int_{\Omega_1}v_n^+(t) d\nu\le\int_{\Omega_1}v_{0}^+ d\nu+ \int_0^{t_i^n}\int_{\Omega_1}f_n^+(s) d\nu ds$$
thus, by the assumptions in \eqref{newconditionjstar} and by \eqref{aproxfconfin}, there exists $M\in\R$ such that
$$ \sup_{t\in[0,T]}\int_{\Omega_1} v_n(t)d\nu \le M<\nu(\Omega_1)\Gamma^+$$
for $n$ sufficiently large and, by \eqref{newvepsilonconvergence}, such that
$$\sup_{t\in[0,T]}\int_{\{x\in \Omega_1 \, :\, v_n(t)<-h\}}|v_n(t)|d\nu<\frac{\nu(\Omega_1)\Gamma^+-M}{8}$$
for $n$ sufficiently large. Therefore, we may apply Lemma~\ref{LemaAcotLp2} for $A=\Omega_1$, $B=\emptyset$ and $\alpha=\gamma$ to conclude that there exists a constant $K_2'>0$ such that
$$\Vert u_n^+(t)\Vert_{L^{p}(\Omega_1,\nu)}\le  K_2'\left(\left(\int_{\Omega_1} \int_{\Omega_1}
|u_n^+(t)(y)-u_n^+(t)(x)|^p dm_x(y)d\nu(x)\right)^\frac1p+1 \right)\quad \hbox{for every } 0\le t \le T,  $$
for $n$ sufficiently large.

    Similarly,   we may find $ K_3>0$ such that
$$\Vert u_n^-(t)\Vert_{L^{p}(\Omega_1,\nu)}\le  K_3\left(\left(\int_{\Omega_1} \int_{\Omega_1}
|u_n^-(t)(y)-u_n^-(t)(x)|^p dm_x(y)d\nu(x)\right)^\frac1p+1 \right)\quad \hbox{for every } 0\le t \le T, $$
for $n$ sufficiently large.

Consequently, by the generalised Poincar\'e type inequality together with~\eqref{newbounddelta} for $\delta$ small enough, we get
$$\int_0^T \left\Vert u_n(t) \right\Vert_{L^p(\Omega,\nu)}^{p}dt\le  K_4 \quad \hbox{for every } n\in\N,$$
for some constant $K_4>0$, that is, $\{u_n\}_n$ is bounded in $L^p(0,T;L^p(\Omega,\nu))$. Therefore, there exists a subsequence, which we continue to denote by $\{u_{n}\}_{n}$, and $u\in L^p(0,T;L^p(\Omega,\nu))$ such that
$$u_{n}\stackrel{n}{\rightharpoonup} u \ \hbox{ weakly in } L^p(0,T;L^p(\Omega,\nu)).$$

Note that, since
$
\displaystyle \Big\{\int_0^T\int_\Omega \int_\Omega
|u_n(t)(y)-u_n(t)(x)|^p dm_x(y)d\nu(x)dt\Big\}_n
$
is bounded, then, by \eqref{llo1}, we have that $\{[(t,x,y)\mapsto \a_p(x,y,u_n(t)(y)-u_n(t)(x))]\}_n$ is bounded in $L^{p'}(0,T; L^{p'}(\Omega\times \Omega,\nu\otimes m_x))$ so we may take a further subsequence, which we continue to denote in the same way, such that
$$[(t,x,y)\mapsto \a_p(x,y,u_{n}(t)(y)-u_{n}(t)(x))]\stackrel{n}{\rightharpoonup} \Phi, \ \hbox{ weakly in } L^{p'}(0,T; L^{p'}(\Omega\times \Omega,\nu\otimes m_x)).$$

Now, let   $\Psi\in W^{1,1}_0(0,T;L^p(\Omega,\nu))$, $\hbox{supp}(\Psi)\subset\subset[0,T]$, then
$$\begin{array}{c}
\displaystyle\int_0^T \frac{v_{n}(t)(x)-v_{n}(t-T/n)(x)}{T/n}\Psi(t)(x)dt\\[14pt]
\displaystyle=-\int_0^{T-T/n} v_{n}(t)(x)\frac{\Psi(t+T/n)(x)-\Psi(t)(x)}{T/n}dt+\int_{T-T/n}^T \frac{v_{n}\Psi(t)(x)}{T/n}dt-\int_0^{T/n} \frac{z_0\Psi(t)(x)}{T/n}
\end{array}$$
for $x\in\Omega_1$. Therefore, multiplying both equations in \eqref{newsolutiondiscretization2} by $\Psi$, integrating the first one over $\Omega_1$ and the second one over $\Omega_2$ with respect to $\nu$, adding them, and taking limits as $n\to +\infty$ we get that
$$
\begin{array}{l}
\displaystyle -\int_0^T \int_{\Omega_1}  v(t)(x)\frac{d}{dt}\Psi(t)(x)d\nu(x) dt+\int_0^T \int_{\Omega_2}w(t)(x)\Psi(t)(x)d\nu(x)dt \\ [12pt]
\hspace{10pt}\displaystyle-  \int_0^T\int_\Omega\int_\Omega
\Phi(t,x,y)dm_x(y)\Psi(t)(x)d\nu(x)dt \\ [14pt]
\displaystyle = \int_0^T\int_{\Omega_1} f(t)(x)\Psi(t)(x)d\nu(x)dt.
\end{array}
$$
Therefore, taking $\Psi(t)(x)=\psi(t)\xi(x)$, where $\psi\in   C_c^\infty(0,T)$ and $\xi\in L^p(\Omega,\nu)$, we obtain that
$$
\displaystyle \int_0^T  v(t)(x)\psi'(t)  dt=-\int_0^T \int_\Omega
\Phi(t,x,y)\psi(t)dm_x(y) dt
  - \int_0^T f(t)(x)\psi(t) dt
$$
for $\nu$-a.e. $x\in\Omega_1$.

It follows that
$$
\displaystyle v'(t)(x)  =  \int_\Omega
\Phi(t,x,y)dm_x(y)     +   f(t)(x) \quad\hbox{for a.e. $t\in (0,T)$ and $\nu$-a.e. $x\in\Omega_1$}.
$$
Therefore, since $v\in C([0,T];L^1(\Omega_1,\nu))$, $\Phi\in L^{p'}(0,T; L^{p'}(\Omega\times \Omega,\nu\otimes m_x))$ and $f\in L^{p'}(0,T;L^{p'}(\Omega_1,\nu))$, we get that $v'\in  L^{p'}(0,T;L^{p'}(\Omega_1,\nu))$ and $v\in W^{1,1}(0,T;L^{1}(\Omega_1,\nu))$.

Then, by Remark~\ref{macre01}, we conclude that the mild solution $v$ is, in fact, a strong solution (see~\cite{Benilantesis} or~\cite[Corollary A.34]{ElLibro}). Hence,
\begin{equation}\label{newweakderivative}
\displaystyle v'(t)(x)  -  \int_\Omega
\a_p(x,y,u(t)(y)-u(t)(x))dm_x(y)     =   f(t)(x) \quad\hbox{for a.e. $t\in (0,T)$ and $\nu$-a.e. $x\in\Omega_1$}.
\end{equation}

Let us see, for further use,  that $\displaystyle\int_{\Omega_1}j_{\gamma}^*(v(t))d\nu\in W^{1,1}(0,T)$.  By \eqref{newjstarinequality} and Fatou's lemma, we have
$$
\begin{array}{l}
\displaystyle\limsup_n\frac{1}{2}\int_0^T\int_\Omega \int_\Omega
\a_p(x,y,u_n(t)(y)-u_n(t)(x))(u_n(t)(y)-u_n(t)(x))dm_x(y)d\nu(x)dt\\[14pt]
\displaystyle \le -\int_{\Omega_1}(j_\gamma^*(v(T)(x))- j_\gamma^*(v(0)(x)))d\nu(x) +\int_0^T\int_{\Omega_1}f(t)(x)u(t)(x) d\nu(x)dt.
\end{array}
$$
Moreover, by \eqref{newweakderivative},
\begin{equation}\label{newDefF}
\int_0^T  v(t)(x)\frac{d}{dt}\Psi(t)(x)dt=\int_0^T F(t)(x)\Psi(t)(x)dt,
\end{equation}
where $F$ is given by
$$
F(t)(x)=- \int_\Omega\a_p(x,y,u(t)(y)-u(t)(x))dm_x(y)-f(t)(x), \ \ \ x\in\Omega_1 .
$$
Let $\psi\in   C_c^\infty(0,T)$, $\psi\ge 0$, $\tau>0$ and
$$\eta_\tau (t)(x):=\frac{1}{\tau}\int_t^{t+\tau} u(s)(x) \psi(s)ds, \ \ \ t\in[0,T], \, x\in\Omega_1.$$
Then, for $\tau$ small enough,  $\eta_\tau\in W^{1,1}_0(0,T;L^p(\Omega_1,\nu))$ and we may use it as a test function in \eqref{newDefF} to obtain
$$
\begin{array}{rl}
\displaystyle\int_0^T \int_{\Omega_1} F(t)(x)\eta_\tau(t)(x)d\nu(x) dt&\displaystyle=\int_0^T \int_{\Omega_1} v(t)(x)\frac{d}{dt}\eta_\tau(t)(x)d\nu(x)dt\\[14pt]
&\displaystyle= \int_0^T \int_{\Omega_1}v(t)(x)\frac{u(t+\tau)(x)\psi(t+\tau)-u(t)(x)\psi(t)}{\tau}d\nu(x) dt \\[14pt]
&\displaystyle=\int_0^T \int_{\Omega_1}\frac{v(t-\tau)(x)-v(t)(x)}{\tau} u(t)(x)\psi(t) d\nu(x)dt.
\end{array}$$
Now, since
$$\gamma^{-1}(r)= \partial j_{\gamma^{-1}}(r)=\partial\left(\int_0^r (\gamma^{-1})^0(s) ds\right),$$
$$
\begin{array}{rl}
\displaystyle\int_0^T \int_{\Omega_1}F(t)(x)\eta_\tau(t)(x) d\nu(x)dt &\displaystyle\le \frac{1}{\tau}\int_0^T\int_{\Omega_1}\int_{v(t)(x)}^{v(t-\tau)(x)}(\gamma^{-1})^0(s)ds \psi(t)d\nu(x)dt\\[14pt]
&\displaystyle =\int_0^T\int_{\Omega_1}\int_0^{v(t)(x)}(\gamma^{-1})^0(s)ds \frac{\psi(t+\tau)-\psi(t)}{\tau}d\nu(x)dt,
\end{array}$$
which, letting $\tau\to 0^+$ yields
$$
\begin{array}{rl}
\displaystyle\int_0^T \int_{\Omega_1} F(t)u(t)(x)\psi(t)d\nu(x) dt &\displaystyle\le\int_0^T\int_{\Omega_1}\int_0^{v(t)(x)}(\gamma^{-1})^0(s)ds \Psi'(t) d\nu(x) dt\\[14pt]
&\displaystyle=\int_0^T\int_{\Omega_1} j_{\gamma^{-1}}(v(t)(x)) \psi'(t) d\nu(x) dt\\[14pt]
&\displaystyle = \int_0^T \int_{\Omega_1} j_{\gamma}^*(v(t)(x)) \psi'(t) d\nu(x) dt.
\end{array}$$
Taking
$$\widetilde{\eta}_\tau (t)(x)=\frac{1}{\tau}\int_t^{t+\tau} u(s-\tau) \Psi(s)ds, \ \ t\in[0,T], \, x\in\Omega_1,$$
yields the opposite inequalities so that, in fact,
$$
\int_0^T \int_{\Omega_1}F(t)(x)u(t)(x)d\nu(x)\psi(t) dt \displaystyle = \int_0^T \int_{\Omega_1} j_{\gamma}^*(v(t)(x))d\nu(x)\psi'(t) dt,
$$
i.e.,
$$
  -\frac{d}{dt}\int_{\Omega_1} j_{\gamma}^*(v(t)(x))d\nu(x) =\int_{\Omega_1}F(t)(x)u(t)(x)d\nu(x) \ \hbox{ in $\mathcal{D}'(]0,T[)$},
$$
thus, in particular,
\begin{equation}\label{jstarinsobolev}
\int_{\Omega_1}j_{\gamma}^*(v)d\nu\in W^{1,1}(0,T).
\end{equation}
 \noindent {\it Step 2.}
 Suppose now that, either $\mathcal{R}_{\gamma,\beta}^-=-\infty$ and  $\mathcal{R}_{\gamma,\beta}^+<+\infty$, or $\mathcal{R}_{\gamma,\beta}^->-\infty$ and  $\mathcal{R}_{\gamma,\beta}^+=+\infty$. Recall that we are assuming the hypotheses in \eqref{newconditionjstar}   and that $v_0^n=v_0$ for every $n\in\N$.  Suppose first that $\mathcal{R}_{\gamma,\beta}^-=-\infty$ and $\mathcal{R}_{\gamma,\beta}^+<+\infty$. Then, for $k\in \mathbb{N}$, let $\beta^k:\R\rightarrow \R$ be the following maximal monotone graph
$$\beta^k(r):=\left\{\begin{array}{l}
\beta(r)\quad \hbox{if }r<k,\\[6pt]
[\beta^0(k),\mathfrak{B}^+]\quad\hbox{if }r=k,\\[6pt]
\mathfrak{B}^++r-k\quad\hbox{if }r>k.
\end{array}\right.
$$
We have that $\beta^k\to\beta$ in the sense of maximal monotone graphs. Indeed, given $\lambda>0$ and $s\in \R$ there exists $r\in \R$ such that $s\in r+\lambda\beta(r)$ thus, for $k>r$,  $s\in r+\lambda\beta(r)= r+\lambda\beta^k(r)$, i.e., $r=(I+\lambda\beta)^{-1}(s)=(I+\lambda\beta^k)^{-1}(s)$.

  By Step 1 we know that, since $\mathcal{R}_{\gamma,\beta^k}^-=-\infty$ and $\mathcal{R}_{\gamma,\beta^k}^+=+\infty$, there exists a strong solution $v_k\in W^{1,1}(0,T;L^{1}(\Omega_1,\nu))$  of Problem   $\left(DP_{f-\frac{1}{k},v_0}^{ \a_p,\gamma,\beta^k}\right)$, therefore, there exist  $u_k\in L^p(0,T;L^p(\Omega,\nu))$ and $w_k\in L^{p'}(0,T;L^{p'}(\Omega_2,\nu))$  such that
\begin{equation}\label{solprobaprox}
\left\{ \begin{array}{ll} \displaystyle(v_k)_t(t)(x) -   \int_{\Omega} \a_p(x,y,u_k(t)(y)-u_k(t)(x)) dm_x(y)=f(t)(x)-\frac1k,    &x\in  \Omega_1,\ 0<t<T,
\\ [14pt]
\displaystyle w_k(t)(x)-   \int_{\Omega} \a_p(x,y,u_k(t)(y)-u_k(t)(x)) dm_x(y) =0,    &x\in \Omega_2, \  0<t<T,\end{array} \right.
\end{equation}
with $v_k\in\gamma(u_k)$ $\nu$-a.e. in $\Omega_1$ and $w_k\in\beta^k(u_k)$ $\nu$-a.e. in $\Omega_2$. Let us see that
\begin{equation}\label{arras01}u_k\le u_{k+1}, \ \hbox{ $\nu$-a.e. in $\Omega$}, \, k\in\N,
\end{equation}
and
\begin{equation}\label{arras02}v_k\le v_{k+1}, \ \hbox{ $\nu$-a.e. in $\Omega_1$}, \, k\in\N.
\end{equation}
Going back to the construction of the mild solution, in this case of   $\left(DP_{f-\frac{1}{k},v_0}^{ \a_p,\gamma,\beta^k}\right)$, for each step $n\in\N$ and for each $i\in\{1,\ldots, n\}$, we have that there exists $u_{k,i}^n\in L^p(\Omega,\nu)$, $v_{k,i}^n\in L^{p'}(\Omega_1,\nu)$ and $w_{k,i}^n\in L^{p'}(\Omega_2,\nu)$ such that
$$
\left\{\begin{array}{l}
\displaystyle v_{k,i}^n(x)-\frac{T}{n}\int_\Omega
\a_p(x,y,u_{k,i}^n(y)-u_{k,i}^n(x))dm_x(y)=\frac{T}{n} \left(f_i^n(x)-\frac{1}{k}\right)+v_{k,i-1}^n(x), \ x\in\Omega_1\\ [14pt]
\displaystyle   w_{k,i}^n(x)- \int_\Omega
\a_p(x,y,u_{k,i}^n(y)-u_{k,i}^n(x))dm_x(y)=0, \ x\in\Omega_2,
\end{array}\right.
$$
with $v_{k,i}^n\in\gamma(u_{k,i}^n)$ $\nu$-a.e. in $\Omega_1$ and $w_{k,i}^n\in\beta^k(u_{k,i}^n)$ $\nu$-a.e. in $\Omega_2$.
Let
$$z_{k,i}^n:=\left\{\begin{array}{l}
w_{k+1,i}^n\quad\hbox{if }u_{k+1,i}^n< k,\\[6pt]
\mathfrak{B}^+\quad\hbox{if }u_{k+1,i}^n=k,\\[6pt]
\beta^k(u_{k+1,i}^n)\quad\hbox{if } u_{k+1,i}^n> k,
\end{array}
\right.
$$
for $n\in\N$ and $i\in\{1,\ldots,n\}$ (observe that $\beta^k(r)$ is single-valued for $r>k$ and coincides with $\beta^{k+1}(r)=\beta(r)$ for $r<k$). It is clear that $z_{k,i}^n\in \beta^k(u_{k+1,i}^n)$ and, since $\beta^k\ge \beta^{k+1}$, $z_{k,i}^n\ge w_{k+1,i}^n$. Then,
$$
\left\{\begin{array}{l}
\displaystyle v_{k+1,1}^n(x)-\frac{T}{n}\int_\Omega
\a_p(x,y,u_{k+1,1}^n(y)-u_{k+1,1}^n(x))dm_x(y)=\frac{T}{n} \left(f_1^n(x)-\frac{1}{k+1}\right)+v_{0}(x)\\ [14pt]
\displaystyle > \frac{T}{n} \left(f_1^n(x)-\frac{1}{k}\right)+v_{0}(x)=v_{k,1}^n(x)-\frac{T}{n}\int_\Omega
\a_p(x,y,u_{k,1}^n(y)-u_{k,1}^n(x))dm_x(y)
, \ \ x\in\Omega_1\\
[14pt]
\displaystyle z_{k,1}^n(x)- \int_\Omega
\a_p(x,y,u_{k+1,1}^n(y)-u_{k+1,i}^n(x))dm_x(y) \\ [14pt]
\displaystyle \ge  w_{k+1,1}^n(x)- \int_\Omega
\a_p(x,y,u_{k+1,1}^n(y)-u_{k+1,1}^n(x))dm_x(y)
\\ [14pt]
\displaystyle =0=  w_{k,1}^n(x)- \int_\Omega
\a_p(x,y,u_{k,1}^n(y)-u_{k,1}^n(x))dm_x(y)  , \ \ x\in\Omega_2,
\end{array}\right.
$$
for $n\in\N$. Hence, by the maximum principle (Theorem \ref{maxandcont01}),
$$v_{k,1}^n\le v_{k+1,1}^n \ \ \hbox{
and } \ \  u_{k,1}^n\le u_{k+1,1}^n \ \ \nu\hbox{-a.e.}$$
Proceeding in the same way we get that
$$v_{k,i}^n\le v_{k+1,i}^n\ \ \hbox{
and } \ \  u_{k,i}^n\le u_{k+1,i}^n \ \ \nu\hbox{-a.e.}$$
for each $n\in\N$ and $i\in\{1,\ldots, n\}$. From here we get~\eqref{arras01} and~\eqref{arras02}.

  Since $\gamma^{-1}(r)= \partial j_{\gamma}^*(r)$ and $u_k(t)\in \gamma^{-1}(v_k(t))$ $\nu$-a.e. in $\Omega_1$, we have
$$\int_{\Omega_1}(v_k(t-\tau)(x)-v_k(t)(x))u_k(t)(x)d\nu(x)\le\int_{\Omega_1}j_\gamma^*(v_k(t-\tau)(x))-j_\gamma^*(v_k(t)(x))d\nu(x).$$
Integrating this equation over $[0,T]$, dividing by $\tau$, letting $\tau\to 0^+$ and recalling that, by \eqref{jstarinsobolev}, $\displaystyle\int_{\Omega_1}j^*(v_k)d\nu\in W^{1,1}(0,T)$, we get
$$\begin{array}{rl}
\displaystyle-\int_0^T\int_{\Omega_1}(v_k)_t(t)(x)u_k(t)(x)d\nu(x)dt& \displaystyle \le\int_{\Omega_1}j^*(v(0)(x))-j^*(v_k(T)(x))d\nu(x)\\ [14pt]
&\displaystyle \le \int_{\Omega_1}j^*(v(0)(x))d\nu(x).
\end{array}$$
Therefore, multiplying \eqref{solprobaprox} by $u_k$ and integrating with respect to $\nu$ we get
$$\begin{array}{l}
\displaystyle\frac{1}{2}\int_0^T\int_\Omega\int_{\Omega} \a_p(x,y,u_k(t,y)-u_k(t)(x))(u_k(t)(y)-u_k(t)(x)) dm_x(y)d\nu(x)dt\\ [14pt]
\displaystyle\le \int_0^T\int_{\Omega_1}\left(f(t)(x)-\frac{1}{k}\right)u_k(t)(x)d\nu(x)dt+\int_{\Omega_1}j^*(v(0)(x))d\nu(x).
\end{array}$$
 Now, working as in the previous step, since $\Gamma^{+}<\infty$, we get that $ \displaystyle\left\{\Vert u_k \Vert_{L^{p}(0,T;L^p(\Omega,\nu))}^{p}\right\}_k$ is bounded. Then, by the monotone convergence theorem we get that there exists $u\in L^p(0,T;L^p(\Omega,\nu))$ such that $u_k\stackrel{k}{\longrightarrow}u$ in $L^p(0,T;L^p(\Omega,\nu))$. From this we get, by \cite[Lemma G]{BCrS}, that $v(t)(x)\in \gamma(u(t)(x))$ for a.e. $t\in[0,T]$ and $\nu$-a.e. $x\in\Omega_1$.

Therefore, from \eqref{solprobaprox}   and Lemma \ref{convergenciaap} (note that, by the monotonicity of $\{u_k\}$,  $|u_k|\le \max\{|u_1|,|u|\}\in L^p(\Omega,\nu)$), we get that $(v_k)_t$ converges strongly in $L^{p'}(0,T;L^{p'}(\Omega_1,\nu))$ and $w_k$ converges strongly in   $L^{p'}(0,T;$ $L^{p'}(\Omega_2,\nu))$. In particular, $v\in W^{1,1}(0,T;L^1(\Omega_1,\nu))$, $w(t)(x)\in\beta(u(t)(x))$ for a.e. $t\in[0,T]$ and $\nu$-a.e. $x\in\Omega_2$, and
$$
\left\{ \begin{array}{ll} \displaystyle v_t(t)(x) -   \int_{\Omega} \a_p(x,y,u(t)(y)-u(t)(x)) dm_x(y)=f(t)(x),    &x\in  \Omega_1,\ 0<t<T,
\\ [14pt]
\displaystyle w(t)(x)-   \int_{\Omega} \a_p(x,y,u(t)(y)-u(t)(x)) dm_x(y) =0,    &x\in \Omega_2, \  0<t<T.\end{array} \right.
$$

The case $\mathcal{R}_{\gamma,\beta}^->-\infty$ and $\mathcal{R}_{\gamma,\beta}^+=+\infty$ follows similarly by taking
$$\widetilde \beta^k:=\left\{\begin{array}{l}
\mathfrak{B}^-+r+k\quad\hbox{if }r<-k,\\[6pt]
[\mathfrak{B}^-,\beta^0(-k)]\quad\hbox{if }r=-k,\\[6pt]
\beta(r)\quad \hbox{if }r>-k.
\end{array}\right.
$$
instead of $\beta^k$, $k\in\N$.

 \noindent{\it Step 3.}
Finally, assume that both $\mathcal{R}_{\gamma,\beta}^-$   and $\mathcal{R}_{\gamma,\beta}^+$ are finite. We define, for $k\in \mathbb{N}$,
$$\widetilde \beta^k:=\left\{\begin{array}{l}
\mathfrak{B}^-+r+k\quad\hbox{if }r<-k,\\[6pt]
[\mathfrak{B}^-,\beta^0(-k)]\quad\hbox{if }r=-k,\\[6pt]
\beta(r)\quad \hbox{if }r>-k.
\end{array}\right.
$$
By the previous step we have that, for $k$ large enough such that $f+\frac{1}{k}$ satisfies
$$\int_{\Omega_1}v_0^+d\nu+\int_0^T\int_{\Omega_1}\left(f(s)^++\frac{1}{k}\right)d\nu ds< \nu(\Omega_1)\Gamma^+,$$     there exists a strong solution $v_k\in W^{1,1}(0,T;L^{1}(\Omega_1,\nu))$
of Problem   $\left(DP_{f+\frac{1}{k},v_0}^{ \a_p,\gamma,\widetilde\beta^k}\right)$,
i.e., there exist  $u_k\in L^p(0,T;L^p(\Omega,\nu))$ and $w_k\in L^{p'}(0,T;L^{p'}(\Omega_2,\nu))$  such that
$$
\left\{ \begin{array}{ll} \displaystyle(v_k)_t(t)(x) -   \int_{\Omega} \a_p(x,y,u_k(t)(y)-u_k(t)(x)) dm_x(y)=f(t)(x)+\frac1k,    &x\in  \Omega_1,\ 0<t<T,
\\ [14pt]
\displaystyle w_k(t)(x)-   \int_{\Omega} \a_p(x,y,u_k(t)(y)-u_k(t)(x)) dm_x(y) =0,    &x\in \Omega_2, \  0<t<T,\end{array} \right.
$$
with $v_k\in\gamma(u_k)$ $\nu$-a.e. in $\Omega_1$ and $w_k\in\tilde\beta^k(u_k)$ $\nu$-a.e. in $\Omega_2$.

 Going back to the construction of the mild solution, in this case of  $\left(DP_{f+\frac{1}{k},v_0}^{ \a_p,\gamma,\widetilde\beta^k}\right)$, for each step $n\in\N$ and for each $i\in\{1,\ldots, n\}$, we have that there exists $u_{k,i}^n\in L^p(\Omega,\nu)$, $v_{k,i}^n\in L^{p'}(\Omega_1,\nu)$ and $w_{k,i}^n\in L^{p'}(\Omega_2,\nu)$ such that
$$
\left\{\begin{array}{ll}
\displaystyle v_{k,i}^n(x)-\frac{T}{n}\int_\Omega
\a_p(x,y,u_{k,i}^n(y)-u_{k,i}^n(x))dm_x(y)=\frac{T}{n} \left(f_i^n(x)+\frac{1}{k}\right)+v_{k,i-1}^n(x), &x\in\Omega_1\\ [14pt]
\displaystyle   w_{k,i}^n(x)- \int_\Omega
\a_p(x,y,u_{k,i}^n(y)-u_{k,i}^n(x))dm_x(y)=0, &x\in\Omega_2,
\end{array}\right.
$$
where $v_{k,i}^n\in\gamma(u_{k,i}^n)$ $\nu$-a.e. in $\Omega_1$ and $w_{k,i}^n\in\widetilde\beta^k(u_{k,i}^n)$ $\nu$-a.e. in $\Omega_2$.
Let
$$z_{k,i}^n:=\left\{\begin{array}{l}
w_{k+1,i}^n\quad\hbox{if }u_{k+1,i}^n>- k,\\[6pt]
\mathfrak{B}^-\quad\hbox{if }u_{k+1,i}^n=-k,\\[6pt]
\widetilde\beta^k(u_{k+1,i}^n)\quad\hbox{if } u_{k+1,i}^n<- k,
\end{array}
\right.
$$
for $n\in\N$ and $i\in\{1,\ldots,n\}$ (observe that $\widetilde\beta^k(r)$ is single-valued for $r<-k$ and coincides with $\widetilde \beta^{k+1}(r)=\beta(r)$ for $r>-k$). It is clear that $z_{k,i}^n\in \widetilde\beta^k(u_{k+1,i}^n)$ and, since $\widetilde\beta^k\le \widetilde\beta^{k+1}$, we have that $z_{k,i}^n\le w_{k+1,i}^n$, $i\in\{1,\ldots,n\}$. Then,
$$
\left\{\begin{array}{l}
\displaystyle v_{k+1,1}^n(x)-\frac{T}{n}\int_\Omega
\a_p(x,y,u_{k+1,1}^n(y)-u_{k+1,1}^n(x))dm_x(y)=\frac{T}{n} \left(f_1^n(x)+\frac{1}{k+1}\right)+v_{0}^n(x)\\ [14pt]
\displaystyle < \frac{T}{n} \left(f_1^n(x)+\frac{1}{k}\right)+v_{0}^n(x)=v_{k,1}^n(x)-\frac{T}{n}\int_\Omega
\a_p(x,y,u_{k,1}^n(y)-u_{k,1}^n(x))dm_x(y)
, \ \ x\in\Omega_1\\ [14pt]
\displaystyle z_{k,1}^n(x)- \int_\Omega
\a_p(x,y,u_{k+1,1}^n(y)-u_{k+1,i}^n(x))dm_x(y) \\ [14pt]
\displaystyle \le  w_{k+1,1}^n(x)- \int_\Omega
\a_p(x,y,u_{k+1,1}^n(y)-u_{k+1,1}^n(x))dm_x(y)
\\ [14pt]
\displaystyle =0=  w_{k,1}^n(x)- \int_\Omega
\a_p(x,y,u_{k,1}^n(y)-u_{k,1}^n(x))dm_x(y)  , \ \ x\in\Omega_2,
\end{array}\right.
$$
for $n\in\N$. Hence, by the maximum principle (Theorem \ref{maxandcont01}),
$$v_{k,1}^n\ge v_{k+1,1}^n\ \ \hbox{ and }\ \ u_{k,1}^n\ge u_{k+1,1}^n \ \ \nu\hbox{-a.e.}.$$
Proceeding in the same way we get that, for $n\in\N$ and $i\in\{1,\ldots,n\}$,
$$v_{k,i}^n\ge v_{k+1,i}^n\ \ \hbox{ and }\ \  u_{k,i}^n\ge u_{k+1,i}^n \ \ \nu\hbox{-a.e.} .$$
Therefore,
$$
u_k\ge u_{k+1}, \ \hbox{ $\nu$-a.e. in $\Omega$}, \, k\in\N,
$$
and
$$
v_k\ge v_{k+1}, \ \hbox{ $\nu$-a.e. in $\Omega_1$}, \, k\in\N.
$$

  We can now conclude, as in the previous step, that
$$\int_0^T\Vert u_k^-(t)\Vert_{L^{p}(\Omega_1,\nu)}dt\le  K_5\left(\int_0^T\left(\int_{\Omega_1} \int_{\Omega_1}
|u_k^-(t)(y)-u_k^-(t)(x)|^p dm_x(y)d\nu(x)\right)^\frac1p dt+1 \right) $$
for some constant $  K_5>0$.
  Moreover, by the monotonicity of $\{u_k\}$, we get that $\displaystyle \Big\{\int_0^T\Vert u_k^+(t)\Vert_{L^{p}(\Omega_1,\nu)}dt\Big\}_k$ is bounded.  From this point we can finish the proof as in the previous step. \qed
\end{proof}

\appendix
\renewcommand{\theequation}{A.\arabic{equation}}

  \section{Poincar\'e type inequalities}\label{secineqpoin}

In order to prove the results on existence of solutions of our problems, we have  assumed that appropriate Poincar\'e type inequalities hold. In \cite[Corollary 31]{O}, it is proved that a Poincar\'e type inequality holds on metric random walk spaces (with an invariant measure) with positive coarse Ricci curvature. Under some conditions relating the random walk and the invariant measure some Poincar\'{e} type inequalities are given in \cite[Theorem 4.5]{MST2} (see also~\cite{ElLibro} and~\cite{MST4}). Here we generalise some of these results.

  \begin{definition}\label{poincareatappendixdef}   Let $[X,d,m]$ be a metric random walk space with reversible measure $\nu$ and let $A, B\subset X$ be disjoint measurable sets such that $\nu(A)>0$. Let $Q:=((A\cup B)\times (A\cup B))\setminus (B\times B)$. We say that $[X,d,m]$ satisfies a {\it generalised $(q,p)$-Poincar\'{e} type inequality ($p, q\in[1,+ \infty[$)} on $(A,B)$ (with respect to $\nu$), if, given $0<l\le \nu(A\cup B)$,   there exists a constant $\Lambda>0$ such that,  for any $u \in L^q(A\cup B,\nu)$ and any measurable set $Z\subset A\cup B$ with $\nu(Z)\ge l$,
$$  \left\Vert  u \right\Vert_{L^q(A\cup B,\nu)}  \leq \Lambda\left(\left(\int_{Q} |u(y)-u(x)|^p dm_x(y) d\nu(x) \right)^{\frac1p}+\left| \int_Z u\,d\nu\right|\right).
$$
\end{definition}
In Subsection~\ref{efzly} (Assumption \ref{assumption3}) we have used that the metric random walk space satisfies a generalised $(p,p)$-Poincar\'{e} type inequality on $(\Omega_1\cup\Omega_2,\emptyset)$. This assumption holds true in many important examples, as the next results show.

\begin{lemma}\label{lemmaPoincare}
Let $[X,d,m]$ be a metric random walk space with reversible measure $\nu$ with respect to $m$. Let $A, B\subset X$ be disjoint measurable sets such that $B\subset \partial_m A$, $\nu(A)>0$ and  $A$ is $m$-connected. Suppose that $\nu(A\cup B)<+\infty$ and that
$$
  \nu\left(\left\{ x\in A\cup B \, : \, (m_x\res A)\perp (\nu\res A)\right\}\right)=0.
$$
Let    $q\ge  1$.  Let
$\{u_n\}_n\subset L^q(A\cup B,\nu)$ be a bounded sequence in $L^1(A\cup B,\nu)$     satisfying
\begin{equation}\label{003}
\lim_n \int_{Q}|u_n(y)-u_n(x)|^q dm_x(y)d\nu(x)= 0
\end{equation}
where, as before, $Q=((A\cup B)\times(A\cup B))\setminus(B\times B)$.
Then, there exists $\lambda\in \mathbb{R}$ such that
$$
 u_n(x) \to \lambda\quad\hbox{for } \nu\hbox{-a.e. } x\in A\cup B,
$$
$$
\Vert u_n-\lambda\Vert_{L^q (A,m_x)}\to 0\quad\hbox{for } \nu\hbox{-a.e. } x\in A\cup B,
$$
and
$$
\Vert u_n-\lambda\Vert_{L^q (A\cup B,m_x)}\to 0\quad\hbox{for } \nu\hbox{-a.e. } x\in A.
$$

\end{lemma}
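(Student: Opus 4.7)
The plan combines three ingredients: extraction from the vanishing-energy hypothesis of a subsequence along which $|u_n(y)-u_n(x)|$ tends to zero in $L^q(m_x)$ for $\nu$-a.e.\ $x$; identification of a candidate limit $\lambda$ coming from the $L^1$-bounded averages over $A$; and propagation of the resulting local ``almost-constancy'' to $\nu$-a.e.\ pointwise convergence via $m$-connectedness of $A$ and the non-singularity hypothesis. The $L^q(m_x)$ statements will then be routine consequences of the pointwise limit and the $L^q(m_x)$ estimate from the first step.

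Step~1 is a standard reversibility-plus-Fubini extraction. Writing $Q$, up to a $\nu\otimes m_x$-null set, as $(A\times(A\cup B))\cup(B\times A)$, the hypothesis becomes
\[
\int_A\!\!\int_{A\cup B}|u_n(y)-u_n(x)|^q\,dm_x(y)\,d\nu(x)+\int_B\!\!\int_A|u_n(y)-u_n(x)|^q\,dm_x(y)\,d\nu(x)\longrightarrow 0.
\]
Passing to a fast subsequence (kept in the notation) Fubini gives pointwise $\nu$-a.e.\ vanishing of the inner integrals, and a further extraction gives pointwise $(\nu\otimes m_x)$-a.e.\ vanishing of $|u_n(y)-u_n(x)|$ on $Q$. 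Step~2 uses the $L^1$-bound: the scalars $\bar u_n:=\nu(A)^{-1}\int_A u_n\,d\nu$ are bounded in $\mathbb{R}$, so yet another subsequence provides $\bar u_n\to\lambda$; this is the target constant.

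Step~3 is the main obstacle. One must promote the information above to the $\nu$-a.e.\ convergence $u_n\to\lambda$ with only an $L^1$-bound available (no uniform integrability, and no $L^q$-Poincar\'e inequality, since this lemma is a tool for proving such inequalities). I would truncate: for each $R>0$ the sequence $T_R(u_n)$ is bounded in $L^\infty$ and still satisfies the vanishing-energy condition because $T_R$ is $1$-Lipschitz, so by sequential Banach--Alaoglu some subsequence converges weakly-$*$ in $L^\infty$ to a limit $v_R$. Convex weak lower semicontinuity of $u\mapsto\int_Q|u(y)-u(x)|^q\,dm_x(y)\,d\nu(x)$ then forces $v_R(y)=v_R(x)$ for $(\nu\otimes m_x)$-a.e.\ $(x,y)\in Q$, and applying the $m$-connectedness of $A$ to the level sets of $v_R$ --- exactly as at the end of the proof of Theorem~\ref{maxandcont01}, with the non-singularity hypothesis converting $m_x$-null statements into $\nu$-null ones --- shows that $v_R$ equals a constant $c_R$ $\nu$-a.e.\ on $A$; since $B\subset\partial_m A$, reversibility propagates the same constant to $B$. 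Matching $\int_A T_R(u_n)\,d\nu\to c_R\,\nu(A)$ against $\bar u_n\to\lambda$, with truncation error controlled by $\int_{\{|u_n|>R\}}|u_n|\,d\nu$ along the same subsequence (the pointwise $(\nu\otimes m_x)$-a.e.\ vanishing from Step~1 rules out mass escaping to infinity in a $\nu$-positive set, because on such a set the differences $|u_n(y)-u_n(x)|$ could not vanish), yields $c_R\to\lambda$ as $R\to\infty$. Combined with the $(\nu\otimes m_x)$-a.e.\ vanishing of $|u_n(y)-u_n(x)|$ on $Q$ and the non-singularity of $m_x\res A$ with respect to $\nu\res A$, this gives $u_n(x)\to\lambda$ for $\nu$-a.e.\ $x\in A$; the analogous inner-integral control on $B$ then propagates the limit to $\nu$-a.e.\ $x\in B$.

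Finally, the $L^q(m_x)$ convergences follow from the triangle inequality
\[
\|u_n-\lambda\|_{L^q(A,m_x)}\le \|u_n(\cdot)-u_n(x)\|_{L^q(A,m_x)}+|u_n(x)-\lambda|\,m_x(A)^{1/q},
\]
in which the first summand vanishes by Step~1 for $\nu$-a.e.\ $x\in A\cup B$ and the second by Step~3; the analogous bound with $A\cup B$ in place of $A$ produces the last statement, valid for $\nu$-a.e.\ $x\in A$, where Step~1 provides the $L^q(A\cup B,m_x)$ estimate.
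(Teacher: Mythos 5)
The overall architecture (extraction in Step~1; $m$-connectedness; triangle inequality for the $L^q(m_x)$ statements) matches the paper, but the central Step~3 has two genuine gaps which the paper's own argument circumvents entirely.

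First, the truncation-error control is not established. To conclude $c_R\to\lambda$ you need
$\sup_n\int_{\{|u_n|>R\}}|u_n|\,d\nu\to 0$ as $R\to\infty$, i.e.\ uniform integrability, which is precisely what is not assumed. Your parenthetical argument only excludes mass escaping on a set of \emph{positive} $\nu$-measure; but non-uniform integrability of an $L^1$-bounded sequence typically manifests as spikes on sets of vanishing measure, and the pointwise $\nu\otimes m_x$-a.e.\ vanishing of $|u_n(y)-u_n(x)|$ puts no direct obstruction in the way of that. It is not hard to believe that the vanishing-energy hypothesis is in fact incompatible with such concentration, but that is a claim which would need a proof, and it is nowhere proved in your proposal. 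Second, and more structurally, the weak-$*$ limits $T_R(u_n)\rightharpoonup^* c_R$ carry no pointwise information. Even granting $c_R\to\lambda$, the final assertion that ``this gives $u_n(x)\to\lambda$ for $\nu$-a.e.\ $x\in A$'' does not follow: you have no $x_0$ with $u_n(x_0)$ known to converge, so the propagation mechanism via vanishing differences has nothing to propagate from.

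The paper avoids both issues by \emph{not} attempting to identify $\lambda$ in advance. After the Step-1 extraction, one fixes any $x_0\in A$ outside the relevant $\nu$-null set and, by compactness of $[-\infty,+\infty]$, extracts a subsequence along which $u_n(x_0)\to\lambda$ for some $\lambda$ that is, a priori, allowed to be $\pm\infty$. The pointwise vanishing $u_n(y)-u_n(x_0)\to 0$ for $m_{x_0}$-a.e.\ $y$ then forces $u_n(y)\to\lambda$ for $m_{x_0}$-a.e.\ $y$; the non-singularity hypothesis turns this into a $\nu$-positive set; $m$-connectedness of $A$ and a second application of non-singularity (to propagate from $A$ to $B\subset\partial_m A$) then give $u_n\to\lambda$ $\nu$-a.e.\ on $A\cup B$. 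Only at the very end does Fatou's lemma, combined with the $L^1$-boundedness hypothesis, force $\lambda\in\R$. This ordering — pointwise convergence to a possibly infinite constant first, finiteness afterward — is what eliminates the need for uniform integrability and for any averaging/truncation machinery; it also makes Steps~2 and~3 of your proposal unnecessary once Step~1 is in place.
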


\begin{proof}
  If $B=\emptyset$ (or $\nu$-null) one can skip some steps in the proof.
Let $$F_n(x,y)=|u_n(y)-u_n(x)|, \quad (x,y)\in Q,$$
$$f_n(x)=\int_{A} |u_n(y)-u_n(x)|^q\, dm_x(y), \quad x\in A\cup B,$$
and
$$g_n(x)=\int_{A\cup B} |u_n(y)-u_n(x)|^q\, dm_x(y), \quad x\in A .$$
  Let
$$\mathcal{N}_\perp:=\left\{ x\in A\cup B \, : \, (m_x\res A)\perp (\nu\res A)\right\}.$$
From
\eqref{003}, it follows that
$$f_n\to 0\quad\hbox{in } L^1(A\cup B,\nu)$$
and
$$g_n\to 0\quad\hbox{in } L^1(A,\nu).$$ Passing to a subsequence if necessary, we can assume that
\begin{equation}\label{005}
f_n(x)\to 0\quad \hbox{for every } x\in (A\cup B)\setminus N_f,\quad \hbox{where } N_f\subset A\cup B \hbox{ is } \nu\hbox{-null}
\end{equation}
and
\begin{equation}\label{005g}
g_n(x)\to 0\quad\hbox{for every } x\in A\setminus N_g,\quad \hbox{where } N_g\subset A \hbox{ is } \nu\hbox{-null}.
\end{equation}
On the other hand, by \eqref{003}, we also have that $$F_n\to 0\quad\hbox{in }
L^q(Q, \nu\otimes m_x).$$ Therefore, we can suppose that, up to a subsequence,
\begin{equation}\label{006}
F_n(x,y)\to 0\quad\hbox{for every } (x,y)\in Q\setminus
C,\quad \hbox{where } C\subset Q\hbox{ is } \nu\otimes m_x\hbox{-null}.
\end{equation}
Let $N_1\subset A$ be a $\nu$-null set satisfying that,
$$  \hbox{ for all $x\in
A\setminus N_1$, the section $C_{x}  := \{ y \in A\cup B  :  (x,y) \in C \}$ of $C$ is
$m_x$-null,}$$
and
$N_2\subset A\cup B$ be a $\nu$-null set satisfying that,
$$  \hbox{for all $x\in
(A\cup B)\setminus N_2$, the section $C_{x}'  := \{ y \in A  :  (x,y) \in C \}$ of $C$ is
$m_x$-null.}
$$

Now, since $A$ is $m$-connected and $B\subset\partial_m A$,
$$D:=\{ x\in A\cup B : m_x(A)=0\}$$
is $\nu$-null. Indeed, by the definition of $D$,  $L_m(A\cap D, A)=0$ thus, in particular, $L_m(A\cap D,A\setminus D)=0$ which, since $A$ is $m$-connected, implies that $\nu(A\cap D)=0$ or $\nu(A\cap D)=\nu(A)$. However, if $\nu(A\cap D)=\nu(A)$ then, for any $E$, $F\subset A$, we have $L_m(E,F)\le L_m(D\cap A,A)=0$ which is a contradiction, thus $\nu(D\cap A)=0$. Now, since $B\subset \partial_m A$, $m_x(A)>0$ for every $x\in B$, thus $\nu(B\cap D)=0$.

 Set   $N:=\mathcal{N}_\perp\cup N_f\cup N_g\cup N_1\cup N_2\cup D$  (note that $\nu(N)=0$). Fix  $x_0\in A\setminus N$. Up to a subsequence,  $u_n(x_0)\to\lambda$ for some $\lambda\in[-\infty,+\infty]$, but then, by \eqref{006}, we also have that $u_n(y)\to\lambda$ for every $y\in (A\cup B)\setminus C_{x_0}$. However,     since $x_0\not\in \mathcal{N}_\perp$ and $m_{x_0}(C_{x_0})=0$, we must have that $\nu(A\setminus C_{x_0})>0$; thus, if
 $$S:=\{x\in A\cup B: u_n(x)\to\lambda\}$$
  then $\nu(S\cap A)\ge\nu(A\setminus C_{x_0})>0$. Note that, if $x\in (A\cap S)\setminus N$ then, by \eqref{006} again, $(A\cup B)\setminus C_x\subset S$ thus $m_x((A\cup B)\setminus S)\le m_x(C_x)=0$; therefore,
 $$
 L_m(A\cap S, (A\cup B)\setminus S)=0.
 $$
 In particular, $L_m(A\cap S, A\setminus S)=0$, but, since $A$ is $m$-connected and $\nu(A\cap S)>0$, we must have $\nu(A\setminus S)=0$, i.e. $\nu(A)=\nu(A\cap S)$.

 Finally, suppose that $\nu(B\setminus S)>0$. Let $x\in B\setminus (S\cup N)$. By \eqref{006},  $A\setminus C_x'\subset A\setminus S$,   i.e., $S\cap A\subset C_x'$, thus $m_x(S\cap A)=0$. Therefore, since $x\not\in \mathcal{N}_\perp$, we must have $\nu(A\setminus S)>0$ which is a contradiction with what we have already obtained.
  Consequently, we have obtained that $u_n$ converges $\nu$-a.e. in $A\cup B$ to $\lambda$:
$$
u_n(x)\to\lambda\quad \hbox{ for every } x\in S,\ \nu((A\cup B)\setminus S)=0.$$

Since $\{\Vert u_n\Vert_{L^1 (A\cup B,\nu)}\}_n$ is bounded, by Fatou's Lemma, we must have that $\lambda \in \R$.
On the other hand, by \eqref{005},
$$F_n(x,\cdot)\to 0\quad\hbox{in } L^{q}(A,m_x)\ ,$$
for every $x\in \Omega\setminus N_f$. In other words, $\Vert u_n(\cdot)-u_n(x)\Vert_{L^q (A, m_x)}\to 0$, thus
$$
\Vert u_n-\lambda\Vert_{L^q (A,m_x)}\to 0\quad\hbox{for $\nu$-a.e. }x\in A\cup B.
$$
Similarly, by \eqref{005g},
$$
\Vert u_n-\lambda\Vert_{L^q (A\cup B,m_x)}\to 0\quad\hbox{for $\nu$-a.e. }x\in A.
$$ \qed
\end{proof}

\begin{theorem}\label{lab197}
  Let $p\ge 1$.    Let $[X,d,m]$ be a metric random walk space with reversible measure $\nu$. Let $A,B\subset X$ be disjoint measurable sets such that $B\subset \partial_m A$, $\nu(A)>0$ and  $A$ is $m$-connected. Suppose that $\nu(A\cup B)<+\infty$ and that
$$    \nu\left(\left\{ x\in A\cup B \, : \, (m_x\res A)\perp (\nu\res A)\right\}\right)=0.
$$
  Assume further that, given a $\nu$-null set $N\subset A$, there exist $x_1,x_2,\ldots, x_L\in A\setminus N$ and a constant $C>0$ such that $\nu\res (A\cup B) \le C(m_{x_1}+\cdots+m_{x_L})\res (A \cup B)$. Then, $[X,d,m]$ satisfies a generalised $(p,p)$-Poincar\'e type inequality on $(A,B)$.
\end{theorem}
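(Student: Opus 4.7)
The plan is to argue by contradiction, using Lemma~\ref{lemmaPoincare} for the compactness step and then using the domination hypothesis to upgrade the pointwise $m_x$-convergence it produces into $\nu$-strong convergence. Suppose the desired inequality fails for some $0<l\le\nu(A\cup B)$. By the homogeneity of both sides in $u$, for each $n\in\mathbb{N}$ one can find $v_n\in L^p(A\cup B,\nu)$ with $\|v_n\|_{L^p(A\cup B,\nu)}=1$ and a $\nu$-measurable set $Z_n\subset A\cup B$ with $\nu(Z_n)\ge l$ such that
$$\left(\int_{Q}|v_n(y)-v_n(x)|^p\,dm_x(y)\,d\nu(x)\right)^{\!1/p}+\left|\int_{Z_n}v_n\,d\nu\right|<\frac{1}{n}.$$
Since $\nu(A\cup B)<+\infty$, H\"older's inequality ensures that $\{v_n\}_n$ is bounded in $L^1(A\cup B,\nu)$, and the left integral above tends to $0$. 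Hence the hypotheses of Lemma~\ref{lemmaPoincare} (with $q=p$) are met, so, passing to a subsequence, there exists $\lambda\in\mathbb{R}$ with $v_n\to\lambda$ $\nu$-a.e.\ on $A\cup B$ and $\|v_n-\lambda\|_{L^p(A\cup B,m_x)}\to 0$ for every $x\in A\setminus N$, where $N\subset A$ is some $\nu$-null set.

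Next I invoke the domination hypothesis for this particular $N$: there exist $x_1,\dots,x_L\in A\setminus N$ and $C>0$ with $\nu\res(A\cup B)\le C\sum_{j=1}^L m_{x_j}\res(A\cup B)$. Then
$$\int_{A\cup B}|v_n-\lambda|^p\,d\nu\le C\sum_{j=1}^L\int_{A\cup B}|v_n-\lambda|^p\,dm_{x_j}\longrightarrow 0,$$
so $v_n\to\lambda$ strongly in $L^p(A\cup B,\nu)$. Because $\|v_n\|_{L^p(A\cup B,\nu)}=1$ for all $n$, this forces $|\lambda|\,\nu(A\cup B)^{1/p}=1$, and in particular $\lambda\ne 0$.

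Finally, the second piece of the contradiction: applying H\"older on each $Z_n$,
$$\left|\lambda\,\nu(Z_n)-\int_{Z_n}v_n\,d\nu\right|=\left|\int_{Z_n}(\lambda-v_n)\,d\nu\right|\le\|v_n-\lambda\|_{L^p(A\cup B,\nu)}\,\nu(A\cup B)^{1/p'}\to 0,$$
while $\int_{Z_n}v_n\,d\nu\to 0$ by construction. Hence $\lambda\,\nu(Z_n)\to 0$, and since $\nu(Z_n)\ge l>0$ this yields $\lambda=0$, contradicting the previous paragraph. The main obstacle in this scheme is precisely the bridge between the $m_x$-convergence (for $\nu$-a.e.\ $x$) delivered by Lemma~\ref{lemmaPoincare} and the uniform $L^p(\nu)$-convergence of $\{v_n\}_n$; this is exactly what the finite domination $\nu\res(A\cup B)\le C\sum_j m_{x_j}\res(A\cup B)$ is tailored to handle, and is the only place where that hypothesis enters the proof.
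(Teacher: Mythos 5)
Your proof is correct and follows the same overall scheme as the paper's: assume the inequality fails, normalize to get $\|v_n\|_{L^p(A\cup B,\nu)}=1$ with both the Dirichlet energy and $\int_{Z_n}v_n$ tending to zero, invoke Lemma~\ref{lemmaPoincare} to obtain $\nu$-a.e.\ convergence $v_n\to\lambda$ and the $m_x$-$L^p$ convergence for $\nu$-a.e.\ $x\in A$, upgrade to strong $L^p(A\cup B,\nu)$ convergence via the domination hypothesis, and derive a contradiction from $\lambda=0$ versus $\|v_n\|_{L^p}=1$. The one place you deviate is the final step showing $\lambda=0$: the paper passes to a further subsequence to extract a weak limit $\phi$ of the indicators $\1_{Z_n}$ (in $L^{p'}$, or weakly-$\ast$ in $L^\infty$ when $p=1$), proves $\int\phi\ge l>0$, and then concludes $\lambda\int\phi=0$ from the pairing of strong and weak convergence. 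You instead apply H\"older directly to $\int_{Z_n}(\lambda-v_n)\,d\nu$, getting $|\lambda|\nu(Z_n)\le\|v_n-\lambda\|_{L^p}\nu(A\cup B)^{1/p'}+|\int_{Z_n}v_n\,d\nu|\to 0$, which with $\nu(Z_n)\ge l$ forces $\lambda=0$ immediately. This is a genuine simplification: it avoids the weak compactness machinery entirely, needs no further subsequence extraction, and does not require a separate treatment of the $p=1$ case. Both arguments are valid; yours is slightly shorter and more elementary at that one juncture.
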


\begin{proof}
Let $p\ge 1$ and $0<h\le \nu(A\cup B)$. We want to prove that there exists a constant  $\Lambda>0$  such that
$$  \left\Vert  u \right\Vert_{L^p(A\cup B,\nu)}  \leq \Lambda\left(\left(\int_{Q} |u(y)-u(x)|^p dm_x(y) d\nu(x) \right)^{\frac1p}+\left| \int_{Z} u\,d\nu\right|\right)$$
for every $u \in L^p(A\cup B,\nu)$ and every measurable set $Z\subset A\cup B$ with $\nu(Z)\ge l$. Suppose that this inequality is not satisfied for any $\Lambda$. Then, there exists a sequence $\{u_n\}_{n\in\N}\subset   L^p(A\cup B,\nu)$, with $\Vert u_n\Vert_{L^p (A\cup B,\nu)}=1$, and a sequence of measurable sets $Z_n\subset A\cup B$ with $\nu(Z_n)\ge l$, $n\in\N$,   satisfying
$$
\lim_n \int_{Q}|u_n(y)-u_n(x)|^p dm_x(y)d\nu(x)= 0
$$
and
$$
\lim_n\int_{Z_n} u_n\, d\nu= 0.
$$
Therefore, by Lemma~\ref{lemmaPoincare}, there exist $\lambda\in \mathbb{R}$ and a $\nu$-null set $N\subset A$ such that
$$
\Vert u_n-\lambda\Vert_{L^p (A\cup B,m_x)}\stackrel{n}{\longrightarrow} 0\quad\hbox{for every } x\in A\setminus N.
$$

 Now, by hypothesis, there exist $x_1,x_2,\ldots, x_L\in A\setminus N$ and $C>0$ such that $\nu\res (A\cup B) \le C(m_{x_1}+\cdots+m_{x_L})\res (A \cup B)$. Therefore,
$$
\Vert u_n-\lambda\Vert^p_{L^p (A\cup B,\nu)}\le C\sum_{i=1}^L \Vert u_n-\lambda\Vert^p_{L^p (  A\cup B,m_{x_i})}\stackrel{n}{\longrightarrow} 0.
$$
Moreover, since $\{\1_{Z_n}\}_n$ is bounded in $L^{p'}(A\cup B, \nu)$, there exists $\phi\in L^{p'}(A\cup B,\nu)$ such that, up to a subsequence, $\1_{Z_n}\rightharpoonup \phi$ weakly in $L^{p'}(A\cup B,\nu)$ (weakly-$\ast$ in $L^\infty(A\cup B,\nu)$ in the case $p=1$). In addition, $\phi\ge 0$ $\nu$-a.e. in $A\cup B$ and
$$0<l\le\lim_{n\to+\infty}\nu(Z_n)=\lim_{n\to +\infty}\int_{A\cup B} \1_{Z_n}d\nu=\int_{A\cup B}\phi d\nu .$$
Then, since $u_n\stackrel{n}{\longrightarrow}\lambda$ in $L^p(A\cup B,\nu)$ and $\1_{Z_n}\stackrel{n}{\rightharpoonup} \phi$ weakly in $L^{p'}(A\cup B,\nu)$ (weakly-$\ast$ in $L^\infty(A\cup B,\nu)$ in the case $p=1$),
 $$0=\lim_{n\to+\infty}\int_{Z_n}u_n=\lim_{n\to+\infty}\int_{A\cup B}\1_{Z_n}u_n=\lambda\int_{A\cup B}\phi d\nu,$$
thus $\lambda=0$. This is a contradiction with  $||u_n||_{L^p (A\cup B,\nu)}=1$, $n\in\N$, since $u_n\stackrel{n}{\longrightarrow} \lambda$ in $L^p(A\cup B,\nu)$, so the theorem is proved. \qed
\end{proof}

\begin{remark}
If  $\Omega:=\Omega_1\cup\Omega_2$ is $m$-connected we can apply the theorem with $A:=\Omega$ and $B=\emptyset$ to obtain the generalised Poincar\'e type inequality used in Subsection~\ref{efzly} (Assumption \ref{assumption3}).

We can take $A=X$, $B=\emptyset$ and $Z=X$ in the theorem to obtain \cite[Theorem 4.5]{MST2}.
\end{remark}

\begin{remark}\label{comrem23}
The assumption that, given a $\nu$-null set $N\subset A$, there exist $x_1,x_2,\ldots, x_L\in A\setminus N$ and $C>0$ such that $\nu\res (A\cup B)\le C(m_{x_1}+\cdots+m_{x_L})\res (A\cup B)$ is not as strong as it seems.   Indeed, this is trivially satisfied by connected locally finite weighted discrete graphs and is also satisfied by $[\R^N,d,m^J]$ (recall Examples \ref{ejem01} and \ref{ejem02}) if, for a domain $A\subset \R^N$, we take $B\subset \partial_{m^J} A$ such that $dist(B,\R^N\setminus A_{m^J})>0$. Moreover, in the following example we see that if we remove this hypothesis then the statement is not true in general.

Consider the metric random walk space $[\R,d,m^J]$ where $d$ is the Euclidean distance and $J:=\frac12\1_{[-1,1]}$ (recall Example \ref{ejem02}). Let $A:=[-1,1]$ and $B:=\partial_{m^J}A =[-2,2]\setminus A$. Then, if $N=\{-1,1\}$ we may not find points satisfying the aforementioned assumption. In fact, the  statement of the theorem does not hold for any $p>1$ as can be seen by taking $u_n:=\frac{1}{2}n^{\frac{1}{p}}\left(\1_{[-2,-2+\frac1n]}-\1_{[2-\frac1n,2]}\right)$ and $Z:=A\cup B$. Indeed, first note that $\Vert u_n\Vert_{ L^p([-2,2],\nu)}=1$ and $\int_{[-2,2]}u_nd\nu=0$ for every $n\in\N$. Now, $\hbox{supp}(m^J_x)=[x-1,x+1]$ for $x\in [-1,1]$ and, therefore,
$$\begin{array}{rl}
\displaystyle\int_{[-2,2]} |u_n(y)-u_n(x)|^p dm^J_x(y)&\displaystyle =\int_{[-2,-2+\frac1n]\cap [x-1,x+1]}n \, dm^J_x(y)+\int_{[2-\frac1n,2]\cap [x-1,x+1]}n \, dm^J_x(y)\\ [14pt]
& \displaystyle=2n\1_{[1-\frac1n,1]}(x)\int_{[2-\frac1n,x+1]}dm^J_x(y)\\ [14pt]
&\displaystyle=2n\left(x-1+\frac1n\right)\1_{[1-\frac1n,1]}(x)
\end{array}$$
for $x\in [-1,1]$. Consequently,
$$\begin{array}{rl}
\displaystyle\int_{[-1,1]}\int_{[-2,2]}|u_n(y)-u_n(x)|^p dm_x^J(y)d\mathcal{L}^1(x)&\displaystyle= 2n\int_{[1-\frac1n,1]}\left(x-1+\frac1n\right) d\mathcal{L}^1(x) \\ [14pt]
&\displaystyle=2n\left(\frac{1}{2}-\frac{(1-\frac1n)^2}{2}-\frac1n+\frac{1}{n^2}\right)=\frac{1}{n}. \end{array}$$
Finally, by the reversibility of $\mathcal{L}^1$ with respect to $m^J$,
$$\int_{[-2,2]}\int_{[-1,1]}|u_n(y)-u_n(x)|^p dm_x^J(y)d\mathcal{L}^1(x)=\frac{1}{n},$$
thus
$$\int_{\left([-2,2]\times[-2,2]\right)\setminus \left(\left([-2,-1]\cup[1,2]\right)\times\left([-2,-1]\cup[1,2]\right)\right)}|u_n(y)-u_n(x)|^p dm_x^J(y)d\mathcal{L}^1(x)\le \frac{2}{n}\stackrel{n}{\longrightarrow}0.$$

However, in this example, as we mentioned before, we can take $B\subset\partial_m A$ such that $\mbox{dist}(B,\R\setminus [-2,2])>0$ to avoid this problem and to ensure that the hypotheses of the theorem are satisfied so that $(A,B)$ satisfies a generalised $(p,p)$-Poincar\'e type inequality.
\end{remark}

In the following example, the metric random walk space $[X,d,m]$ that is defined, together with  the invariant measure $\nu$, satisfies that $m_x\perp \nu$ for every $x\in X$,  and a Poincar\'e type inequality does not hold.

\begin{example}
Let $p>1$. Let $S^1=\{e^{2\pi i \alpha} \, : \, \alpha\in [0,1)\}$ and let $T_{\theta}:S^1\longrightarrow S^1$ denote the irrational rotation map $T_{\theta}(x)=xe^{2\pi i \theta}$ where $\theta$ is an irrational number. On $S^1$ consider the Borel $\sigma$-algebra $\mathcal{B}$ and the $1$-dimensional Hausdorff measure $\nu:=\mathcal{H}_1\res{S^1}$. It is well known that $T_{\theta}$ is a uniquely ergodic measure-preserving transformation on $(S^1,\mathcal{B},\nu)$.

Now, denote $X:=S^1$ and let $m_x:=\frac12 \delta_{T_{-\theta} (x)}+\frac12 \delta_{T_{\theta}(x)}$, $x\in X$. Then $\nu$ is reversible with respect to the metric random walk space $[X,d,m]$, where $d$ is the metric given by the arclength. Indeed, let $f\in L^1(X\times X, \nu\otimes\nu)$, then
$$\begin{array}{rl}
\displaystyle\int_{S^1}\int_{S^1} f(x,y)dm_x(y)d\nu(x)&\displaystyle=\frac12 \int_{S^1}f(x,T_{-\theta}(x))d\nu(x)+\frac12 \int_{S^1}f(x,T_{\theta}(x))d\nu(x)\\ [14pt]
&\displaystyle=\frac12 \int_{S^1}f(T_{\theta}(x),x)d\nu(x)+\frac12 \int_{S^1}f(T_{-\theta}(x),x)d\nu(x)\\ [14pt]
&\displaystyle=\int_{S^1}\int_{S^1} f(y,x)dm_x(y)d\nu(x).
\end{array}$$
Let us see that this space is $m$-connected. First note that, for $x\in X$,
$$m_x^{\ast 2}:=\frac{1}{2}\delta_x + \frac14 \delta_{T^2_{-\theta} (x)}+\frac14 \delta_{T^2_{\theta}(x)}\ge \frac14 \delta_{T^2_{\theta}(x)}$$
and, by induction, it is easy to see that
$$m_x^{\ast n}\ge \frac{1}{2^n}\delta_{T^n_{\theta}(x)}\ .$$
Here, $m_x^{\ast n}$, $n\in\N$, is defined inductively as follows (see \cite{MST0})
$$
\displaystyle
dm_x^{*n}(y):= \int_{z \in X}  dm_z(y)dm_x^{*(n-1)}(z).
$$
Now, let $A\subset X$ such that $\nu(A)>0$. By the pointwise ergodic theorem,
$$\lim_{n\to+\infty}\frac1n \sum_{k=0}^{n-1}\1_A\left(T_\theta^k(x)\right)=\frac{\nu(A)}{\nu(X)}>0$$
for $\nu$-a.e. $x\in X$. Consequently, for $\nu$-a.e. $x\in X$, there exists $n\in\N$ such that $$\1_A\left(T_\theta^n(x)\right)=\delta_{T_\theta^n(x)}(A)>0$$
thus $\nu\left(\left\{x\in X\, : \, m_x^{\ast n}(A)=0 \ \hbox{ for every } n\in \N\right\}\right)=0$. Then, according to \cite[Definition 2.8]{MST0} (see also \cite[Proposition 2.11]{MST0}),  $[X,d,m]$ with the invariant measure $\nu$ for $m$ is $m$-connected.

Let us see that $[X,d,m,\nu]$ does not satisfy a $(p,p)$-Poincar\'e type inequality. For $n\in\N$ let
$$I_k^n:=\left\{ e^{2\pi i \alpha} \, : \, k\theta-\delta(n) < \alpha < k\theta+\delta(n)\right\}, \ \hbox{ $-1\le k \le 2n$,}$$
where $\delta(n)>0$ is chosen so that
$$I_{k_1}^n\cap I_{k_2}^n=\emptyset \ \hbox{ for every $-1\le k_1, k_2 \le 2n$, $k_1\neq k_2$}$$
(note that $e^{2\pi i (k_1\theta-\delta(n))}\neq e^{2\pi i (k_2\theta-\delta(n))}$ for every $k_1\neq k_2$ since $T_\theta$ is ergodic).
Consider the following sequence of functions:
$$u_n:=\sum_{k=0}^{n-1}\1_{I_k^n}-\sum_{k=n}^{2n-1}\1_{I_k^n}, \quad n\in\N.$$
Then,
$$\int_X u_n d\nu =0 \quad \hbox{ for every $n\in \N$,} $$
and
$$\int_X |u_n|^p d\nu =4n\delta(n) \quad \hbox{ for every $n\in \N$.} $$
Fix $n\in\N$, let us see what happens with
$$\int_X\int_X |u_n(y)-u_n(x)|^p dm_x(y)d\nu(x).$$
If $1\le k\le n-2$ or $n+1\le k\le 2n-2$ and $x\in I_k^n$ then
$$\int_X |u_n(y)-u_n(x)|^p dm_x(y)=\frac12|u_n(T_{-\theta}(x))-u_n(x)|^p +\frac12|u_n(T_\theta(x))-u_n(x)|^p =0$$
since $T_{-\theta}(x)\in I_{k-1}^n$ and $T_{\theta}(x)\in I_{k+1}^n$.
Now, if $x\in I_0^n$ then $T_{-\theta}(x)\in I_{-1}^n$ thus
$$\frac12|u_n(T_{-\theta}(x))-u_n(x)|^p +\frac12|u_n(T_\theta(x))-u_n(x)|^p =\frac12 |-1|^p=\frac12$$
and the same holds if $x\in I_{2n-1}^n$ (then $T_{\theta}(x)\in I_{2n}^n$). For $x\in I_{n-1}$ we have $T_{\theta}(x)\in I_{n}^n$ thus
$$\frac12|u_n(T_{-\theta}(x))-u_n(x)|^p +\frac12|u_n(T_\theta(x))-u_n(x)|^p =\frac12 |-1-1|^p=2^{p-1}$$
and the same result is obtained for $x\in I_{n+1}^n$. Similarly, if $x\in I_{-1}^n$ or $x\in I_{2n}^n$,
$$\frac12|u_n(T_{-\theta}(x))-u_n(x)|^p +\frac12|u_n(T_\theta(x))-u_n(x)|^p =\frac12.$$
Finally, if $x\notin \cup_{k=-1}^{2n}I_k^n$ then $T_{-\theta}(x), T_{\theta}(x)\not\in\cup_{k=0}^{2n-1}I_k^n$ thus
$$\frac12|u_n(T_{-\theta}(x))-u_n(x)|^p +\frac12|u_n(T_\theta(x))-u_n(x)|^p =0.$$
Consequently,
$$\int_X\int_X |u_n(y)-u_n(x)|^p dm_x(y)d\nu(x)=\frac12(4\cdot2\delta(n))+2^{p-1}(2\cdot2\delta(n))=(4+2^{p+1})\delta(n).$$
Therefore, there is no $\Lambda>0$ such that
$$\left\Vert u_n-\frac{1}{2\pi}\int_X u_nd\nu\right\Vert_{L^p(X,\nu)} \le \Lambda\left(\int_X\int_X |u_n(y)-u_n(x)|^p dm_x(y)d\nu(x)\right)^{\frac1p}\quad \hbox{for every } n \in\N,$$
since this would imply
$$4n\delta(n)\le \Lambda(4+2^{p+1})\delta(n) \Longrightarrow n\le \Lambda+2^{p-1}\quad \hbox{for every } n\in\N .$$
\end{example}

  The proofs of the following lemmas are similar to the proof of~\cite[Lemma 4.2]{AIMTifb}.

\begin{lemma}\label{LemaAcotLp}
  Let $p\ge 1$.  Let $[X,d,m]$ be a metric random walk space with reversible measure $\nu$ with respect to $m$. Let $A, B\subset X$ be disjoint measurable sets and assume that $A\cup B$ is non-$\nu$-null and $m$-connected. Suppose that $[X,d,m]$ satisfies a generalised $(p,p)$-Poincar\'e type inequality on $(A\cup B,\emptyset)$. Let $\alpha$ and $\tau$ be maximal monotone graphs in $\R^2$ such that $0\in \alpha(0)$ and $0\in\tau(0)$. Let   $\{u_n\}_{n\in\N}\subset L^p(A\cup B,\nu)$, $\{z_n\}_{n\in\N}\subset L^1(A,\nu)$ and $\{\omega_n\}_{n\in\N}\subset L^1(B,\nu)$ be such that, for every $n\in\N$, $z_n\in \alpha(u_n)$ $\nu$-a.e. in $A$ and $\omega_n\in \tau(u_n)$ $\nu$-a.e. in $B$.
\item[(i)] Suppose that $\mathcal{R}_{\alpha,\tau}^+=+\infty$ and that there exists $M>0$ such that
$$\int_{A}z_n^+d\nu+\int_{B}\omega_n^+d\nu<M   \quad \hbox{for every } n\in\N.$$
Then, there exists a constant $K=K(A,B,M,\alpha, \tau)$     such that
$$  \left\Vert  u^+_n \right\Vert_{L^p(A\cup B,\nu)}  \leq K\left(\left( \int_{(A\cup B)\times(A\cup B)} |u^+_n(y)-u^+_n(x)|^p dm_x(y) d\nu(x) \right)^{\frac1p}+1\right)  \quad \hbox{for every } n\in\N .$$

\item[(ii)] Suppose that $\mathcal{R}_{\gamma,\beta}^-=-\infty$ and that there exists $M>0$ such that
$$\int_{A}z_n^-d\nu+\int_{B}\omega_n^-d\nu<M   \quad \hbox{for every } n\in\N.$$
Then, there exists a constant $\widetilde{K}=\widetilde{K}(A,B,M, \alpha,\tau)$, such that
$$  \left\Vert  u^-_n \right\Vert_{L^p(A\cup B,\nu)}  \leq \widetilde{K}\left(\left(  \int_{(A\cup B)\times(A\cup B)} |u^-_n(y)-u^-_n(x)|^p dm_x(y) d\nu(x) \right)^{\frac1p}+1\right)  \quad \hbox{for every } n\in\N.$$

\end{lemma}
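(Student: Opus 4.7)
The plan is to reduce each assertion to a single application of the generalised $(p,p)$-Poincar\'{e} type inequality that $(A\cup B,\emptyset)$ is assumed to satisfy. To do so, I will exhibit, for each $n$, a $\nu$-measurable set $Z_n\subset A\cup B$ whose measure is bounded below by a constant independent of $n$ and on which $u_n^+$ (respectively $u_n^-$) is uniformly bounded by a constant independent of $n$. Applying the Poincar\'{e} inequality to $u_n^+$ with $Z=Z_n$ then produces precisely the stated bound, because the term $\bigl|\int_{Z_n}u_n^+\,d\nu\bigr|$ is controlled by the pointwise bound times $\nu(A\cup B)$. Part (ii) will follow from part (i) by replacing the data $(\alpha,\tau,u_n,z_n,\omega_n)$ with the reflected data $(\tilde\alpha,\tilde\tau,-u_n,-z_n,-\omega_n)$ where $\tilde\alpha(r):=-\alpha(-r)$ and similarly for $\tilde\tau$; the hypotheses transform correctly since $\mathcal{R}^-_{\alpha,\tau}=-\mathcal{R}^+_{\tilde\alpha,\tilde\tau}$.

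For part (i), the assumption $\mathcal{R}^+_{\alpha,\tau}=+\infty$ forces $\sup\{\mathrm{Ran}(\alpha)\}=+\infty$ or $\sup\{\mathrm{Ran}(\tau)\}=+\infty$. I will treat the first case; the second is handled identically by working on $B$ with $\omega_n$ and $\tau$ in place of $z_n$ and $\alpha$ (note that $\int_B\omega_n^+\,d\nu<M$ by hypothesis). Define
$$K_n:=\left\{x\in A\,:\,z_n(x)<\tfrac{2M}{\nu(A)}\right\}.$$
Since $z_n^+\ge z_n\ge 2M/\nu(A)$ on $A\setminus K_n$, a Chebyshev-type estimate together with $\int_Az_n^+\,d\nu<M$ gives $\nu(A\setminus K_n)\le \nu(A)/2$, hence $\nu(K_n)\ge\nu(A)/2$, a lower bound independent of $n$.

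The next step is a structural case split that converts the integral estimate on $z_n$ into a pointwise bound on $u_n^+$. If $\sup D(\alpha)=+\infty$, choose $r_0>0$ with $\alpha^0(r_0)>2M/\nu(A)$; since on $K_n$ one has $z_n(x)\in\alpha(u_n(x))$ and $z_n(x)<\alpha^0(r_0)=\min\alpha(r_0)$, monotonicity of $\alpha$ forces $u_n(x)<r_0$ throughout $K_n$. If instead $r_\alpha:=\sup D(\alpha)<+\infty$, then $z_n\in\alpha(u_n)$ automatically forces $u_n\le r_\alpha$ $\nu$-a.e.\ on $A$, hence on $K_n$. In either case, $u_n^+\le C_0$ on $K_n$ for a constant $C_0=C_0(\alpha,M,A)$. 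Taking $Z_n:=K_n$ and $l:=\nu(A)/2$ in the generalised $(p,p)$-Poincar\'{e} inequality applied to $u_n^+$, and bounding $\bigl|\int_{K_n}u_n^+\,d\nu\bigr|\le C_0\,\nu(A\cup B)$, yields
$$\|u_n^+\|_{L^p(A\cup B,\nu)}\le \Lambda\left(\left(\int_{Q_1}|u_n^+(y)-u_n^+(x)|^p\,dm_x(y)d\nu(x)\right)^{1/p}+C_0\nu(A\cup B)\right),$$
and the constant $K:=\Lambda\max(1,C_0\nu(A\cup B))$ does the job.

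The main technical point, and the only real obstacle, is the exhaustiveness of the case split on $\mathrm{Ran}(\alpha)$, $\mathrm{Ran}(\tau)$, $D(\alpha)$, $D(\tau)$: it must simultaneously guarantee (a) the existence of an integrable side with unbounded range that can play the role of $\alpha$ in the argument above, and (b) a uniform positive lower bound $l$ on $\nu(Z_n)$ that allows a \emph{single} Poincar\'{e} constant $\Lambda$ (depending only on that fixed $l$) to work for all $n$. This mechanism is exactly the one already deployed in the proof of Theorem~\ref{existenceeli01}, so the arguments can be recycled essentially verbatim.
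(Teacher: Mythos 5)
Your proof is correct and follows essentially the same route the paper has in mind: the paper does not spell out a proof here but refers to \cite[Lemma 4.2]{AIMTifb}, and the Chebyshev-type estimate on the set where $z_n$ is small, together with the case split on $\sup D(\alpha)$ finite versus infinite and a single application of the generalised Poincar\'{e} inequality with a fixed lower bound $l$ on $\nu(Z_n)$, is exactly the mechanism used in Step~A of the proof of Theorem~\ref{existenceeli01} (there complicated by Yosida approximations, which your setting lets you skip). The only tiny imprecision is that you should note $\mathcal{R}^+_{\alpha,\tau}=+\infty$ forces $\nu(A)\sup\{\mathrm{Ran}(\alpha)\}=+\infty$ or $\nu(B)\sup\{\mathrm{Ran}(\tau)\}=+\infty$, so that in the case you treat you also automatically have $\nu(A)>0$, which you silently use when dividing by $\nu(A)$.
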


\begin{lemma}\label{LemaAcotLp2}
  Let $p\ge1$.  Let $[X,d,m]$ be a metric random walk space with reversible measure $\nu$ with respect to $m$. Let $A,B\subset X$ be disjoint measurable sets and assume that $A\cup B$ is non-$\nu$-null  and $m$-connected.  Suppose that $[X,d,m]$ satisfies a generalised $(p,p)$-Poincar\'e type inequality on $(A\cup B,\emptyset)$. Let $\alpha$ and $\tau$ be maximal monotone graphs in $\R^2$ such that $0\in \alpha(0)$ and $0\in\tau(0)$. Let $\{u_n\}_{n\in\N}\subset L^p(A\cup B,\nu)$, $\{z_n\}_{n\in\N}\subset L^1(A,\nu)$ and $\{\omega_n\}_{n\in\N}\subset L^1(B,\nu)$ such that, for every $n\in\N$, $z_n\in\alpha(u_n)$ $\nu$-a.e. in $A$ and $\omega_n\in\tau(u_n)$ $\nu$-a.e. in $B$.
\item[(i)] Suppose that $\mathcal{R}_{\alpha,\tau}^+<+\infty$ and that there exists $M\in\R$ and $h>0$ such that
$$\int_{A}z_nd\nu+\int_{B}\omega_nd\nu<M<\mathcal{R}_{\alpha,\tau}^+ \quad \hbox{for every } n\in\N,$$
and
$$\max\left\{\int_{\{x\in A \, :\, z_n<-h\}}|z_n|d\nu,\int_{x\in B \, :\, \omega_n(x)<-h\}}|\omega_n|d\nu\right\}<\frac{\mathcal{R}_{\alpha,\tau}^+-M}{8} \quad \hbox{for every } n\in\N .$$
Then, there exists a constant $K=K(A, B,M,h,\alpha, \tau)$   such that
$$  \left\Vert  u^+_n \right\Vert_{L^p(A\cup B,\nu)}  \leq K\left(\left(\int_{(A\cup B)\times(A\cup B)} |u^+_n(y)-u^+_n(x)|^p dm_x(y) d\nu(x) \right)^{\frac1p}+1\right) \quad\hbox{for every } n\in\N$$

\item[(ii)] Suppose that $\mathcal{R}_{\alpha,\tau}^->-\infty$ and that there exists $M\in\R$ and $h>0$ such that
$$\int_{A}z_nd\nu+\int_{B}\omega_nd\nu>M>\mathcal{R}_{\alpha,\tau}^- \quad \hbox{for every } n\in\N,$$
and
$$\max\left\{\int_{\{x\in A \, :\, z_n>h\}} z_n d\nu,\int_{x\in B \, :\, \omega_n(x)>h\}}\omega_n d\nu\right\}<\frac{M-\mathcal{R}_{\alpha,\tau}^-}{8} \quad \hbox{for every } n\in\N .$$
Then, there exists a constant $\widetilde{K}=\widetilde{K}(A,B,M,h, \alpha, \tau)$  such that
$$  \left\Vert  u^-_n \right\Vert_{L^p(A\cup B,\nu)}  \leq \widetilde{K}\left(\left(\int_{(A\cup B)\times(A\cup B)} |u^-_n(y)-u^-_n(x)|^p dm_x(y) d\nu(x) \right)^{\frac1p}+1\right) \quad \hbox{for every } n\in\N.$$
\end{lemma}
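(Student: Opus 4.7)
The plan is to mirror the argument of Step E in the proof of Theorem \ref{existenceeli01}: use the given integral and tail bounds to identify, for each $n$, a subset of $A\cup B$ of uniformly positive measure on which $u_n^+$ (for part (i)) or $u_n^-$ (for part (ii)) is uniformly bounded, and then apply the generalised $(p,p)$-Poincar\'e type inequality with $Z$ equal to that subset.

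For part (i), I will set $\delta:=\mathcal{R}_{\alpha,\tau}^+-M>0$ and observe that for each $n$ at least one of
$$\int_A z_n\,d\nu<\nu(A)\sup\mbox{Ran}(\alpha)-\tfrac{\delta}{2},\qquad\int_B\omega_n\,d\nu<\nu(B)\sup\mbox{Ran}(\tau)-\tfrac{\delta}{2}$$
holds. In the first alternative I would define $K_n:=\{x\in A:z_n(x)\le\sup\mbox{Ran}(\alpha)-\delta/(4\nu(A))\}$ and split $\int_A z_n\,d\nu$ into contributions from $A\setminus K_n$ (where $z_n$ exceeds the threshold), from $K_n\cap\{z_n\ge -h\}$ (where $z_n\ge -h$), and from $K_n\cap\{z_n<-h\}$ (controlled by the tail hypothesis $\int_{\{z_n<-h\}}|z_n|\,d\nu<\delta/8$). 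Combining the resulting lower bound for $\int_A z_n\,d\nu$ with the hypothesized upper bound will yield a uniform positive lower bound $\nu(K_n)\ge l_1>0$. Next I would invoke the maximal monotonicity of $\alpha$: since the threshold $\sup\mbox{Ran}(\alpha)-\delta/(4\nu(A))$ is strictly below $\sup\mbox{Ran}(\alpha)$, there exists $r_0\in\mathbb R$ (independent of $n$) such that any $z\in\alpha(r)$ with $z\le\sup\mbox{Ran}(\alpha)-\delta/(4\nu(A))$ forces $r\le r_0$; therefore $u_n\le r_0$ and so $u_n^+\le r_0^+$ on $K_n$, giving $\int_{K_n}u_n^+\,d\nu\le r_0^+\nu(A)$, a bound independent of $n$. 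The second alternative is handled symmetrically, producing a set $\tilde K_n\subset B$ with $\nu(\tilde K_n)\ge l_2>0$ and the analogous uniform bound. Taking $l:=\min\{l_1,l_2\}$ and applying the generalised Poincar\'e type inequality to $u_n^+$ with $Z$ equal to $K_n$ or $\tilde K_n$ (according to which alternative holds at index $n$) will close the argument.

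Part (ii) will be handled by the same scheme applied to $u_n^-$, with $\delta':=M-\mathcal{R}_{\alpha,\tau}^->0$, the set $K_n:=\{x\in A:z_n(x)\ge\inf\mbox{Ran}(\alpha)+\delta'/(4\nu(A))\}$, and the right-tail hypothesis $\int_{\{z_n>h\}}z_n\,d\nu<\delta'/8$. The main delicacy will lie in producing a single constant $K$ uniform in $n$: different indices may fall into different alternatives, and within one alternative one must gracefully handle the cases where the chosen $r_0$ is $\le 0$ (so $u_n^+\equiv 0$ on $K_n$ automatically) and where the denominator that appears when solving for $\nu(K_n)$ could in principle be nonpositive (in which case that alternative is simply vacuous for that $n$ and one falls back on the other). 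These degeneracies are benign but must be tracked carefully so that the resulting constant $K=K(A,B,M,h,\alpha,\tau)$ genuinely depends only on the listed data.
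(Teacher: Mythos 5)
Your proposal is correct and takes the same approach that the paper uses implicitly: the paper merely cites an analogue in \cite{AIMTifb}, but the argument you outline mirrors exactly the ``Step E'' and ``mixed case'' arguments in the proof of Theorem~\ref{existenceeli01}, where the same two-alternative/set $K_n$/tail-splitting scheme is carried out. Your treatment of the degeneracies (the sign of the threshold, the possibly nonpositive coefficient multiplying $\nu(K_n)$) matches the case handling there, and applying the generalised $(p,p)$-Poincar\'e inequality to $u_n^+$ with $Z = K_n$ (respectively $Z=\tilde K_n$) and $l := \min\{l_1,l_2\}$ gives the uniform constant as required.
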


\bigskip

\noindent {\bf Acknowledgment.}
The authors appreciate the suggestions and comments made by the anonymous referee, which have allowed for a better presentation of this work.
The authors are grateful to J. M. Maz\'{o}n for stimulating discussions on this paper. The authors have been partially supported  by the Spanish MICIU and FEDER, project PGC2018-094775-B-100, and by the ``Conselleria de Innovaci\'{o}n, Universidades, Ciencia y Sociedad Digital'', project AICO/2021/223. The first author was also supported by the Spanish MICIU under grant BES-2016-079019 (which is supported by the European FSE); by the Spanish Ministerio de Universidades and NextGenerationUE, programme ``recualificaci\'on del sistema universitario espa\~{n}ol'' (Margarita Salas) under grant UP2021-044; and the Conselleria d'Innovaci\'o, Universitats, Ci\`encia i Societat Digital, programme ``Subvenciones para la contrataci\'{o}n de personal investigador en fase postdoctoral'' (APOSTD 2022), under grant CIAPOS/2021/28.

\end{document}